\documentclass[11pt]{amsart}

\allowdisplaybreaks[2] \textwidth16cm \textheight22cm \headheight12pt \oddsidemargin 0cm
\newcommand{\mcD}{\mathcal{D}}
\newcommand{\mcE}{\mathcal{E}}
\newcommand{\mcCh}{\mathcal{C}h}
\newcommand{\mcOv}{\mathcal{O}v}
\newcommand{\mcBr}{\mathcal{B}r}
\newcommand{\mcFr}{\mathcal{F}r}
\DeclareMathOperator{\mfS}{\mathfrak{S}}
\newcommand{\scp}{\operatorname{SCP}}
\newcommand{\DyckP}{\operatorname{DyckP}}

\usepackage{amssymb}
\usepackage{tikz, graphicx, ifthen}
\usepackage{verbatim}
\usepackage{xcolor}
\usepackage[nomessages]{fp}
\usepackage{mathtools}
\usetikzlibrary{calc}
\usetikzlibrary{decorations.pathreplacing}
\usepackage{mathdots}

\usepackage{hyperref}

\usepackage[nameinlink]{cleveref}
\newtheorem{theorem}{Theorem}[section]
\newtheorem{thm}[theorem]{Theorem}
\newtheorem{lemma}[theorem]{Lemma}
\newtheorem{proposition}[theorem]{Proposition}

\newtheorem{example}[theorem]{Example}
\newtheorem{definition}[theorem]{Definition}

\newtheorem{remark}[theorem]{Remark}
\newtheorem{corollary}[theorem]{Corollary}
\newtheorem{cor}[theorem]{Corollary}
\newtheorem{conjecture}[theorem]{Conjecture}

\definecolor{red}{rgb}{1,0,0}
\definecolor{blue}{rgb}{0,0,1}
\definecolor{light-gray}{gray}{.9}

\newcounter{x}
\newcounter{y}

\newcommand*\cubecolors[1]{%
  \ifcase#1\relax
  \or\colorlet{cubecolor}{cyan}%
  \or\colorlet{cubecolor}{green}%
  \or\colorlet{cubecolor}{yellow}%
  \or\colorlet{cubecolor}{pink}%
  \or\colorlet{cubecolor}{orange}%
  \or\colorlet{cubecolor}{purple}%
  \or\colorlet{cubecolor}{white}%
  \else
    \colorlet{cubecolor}{black}%
  \fi
}
\newcommand\yaxis{180}
\newcommand\zaxis{-27}
\newcommand\xaxis{90}

\newcommand\topside[3]{
  \fill[fill=cubecolor, draw=black,shift={(\xaxis:#1)},shift={(\yaxis:#2)},
  shift={(\zaxis:#3)}] (0,0) -- (1,0) -- (0.5,0.25) --(-0.5,0.25)--(0,0);
}

\newcommand\leftside[3]{
  \fill[fill=cubecolor, draw=black,shift={(\xaxis:#1)},shift={(\yaxis:#2)},
  shift={(\zaxis:#3)}] (0,0) -- (0,-1) -- (-0.5,-0.75) --(-0.5,0.25)--(0,0);
}

\newcommand\rightside[3]{
  \fill[fill=cubecolor, draw=black,shift={(\xaxis:#1)},shift={(\yaxis:#2)},
  shift={(\zaxis:#3)}] (0,0) -- (1,0) -- (1,-1) --(0,-1)--(0,0);
}

\newcommand\cube[3]{
  \topside{#1}{#2}{#3} \leftside{#1}{#2}{#3} \rightside{#1}{#2}{#3}
}

\newcommand\ppAff[2]{
 \setcounter{x}{0}
 \foreach \a in {#2} {
    \addtocounter{x}{1}
    \setcounter{y}{-1}
    \foreach \b in \a {
      \addtocounter{y}{1}
      \ifthenelse{\b=0}{\addtocounter{y}{0}}{
        \FPeval{\result}{clip(#1-\the\value{y}-1)}
        \cubecolors{\b}
        \cube{\value{x}}{\value{y}}{0};
        \FPeval{\result}{clip(#1-\the\value{y}-1)}
        \draw[draw=black,shift={(\xaxis:\value{x})},shift={(\yaxis:\value{y})},
  shift={(\zaxis:0)}] (0.5,-0.5) node {\textsf{\result}};}
    }
  }
}

\newcommand\ppsansnumbers[2]{
 \setcounter{x}{0}
 \foreach \a in {#2} {
    \addtocounter{x}{1}
    \setcounter{y}{-1}
    \foreach \b in \a {
      \addtocounter{y}{1}
      \ifthenelse{\b=0}{\addtocounter{y}{0}}{
        \FPeval{\result}{clip(#1-\the\value{y}-1)}
        \cubecolors{\b}
        \cube{\value{x}}{\value{y}}{0};
        \FPeval{\result}{clip(#1-\the\value{y}-1)}
        \draw[draw=black,shift={(\xaxis:\value{x})},shift={(\yaxis:\value{y})},
  shift={(\zaxis:0)}] (0.5,-0.5) node { };}
    }
  }
}

\newcommand\topsidedashed[3]{
  \fill[fill=cubecolor, draw=black,shift={(\xaxis:#1)},shift={(\yaxis:#2)},
  shift={(\zaxis:#3)}] [dashed](0,0) -- (1,0) -- (0.5,0.25) --(-0.5,0.25)--(0,0);
}

\newcommand\leftsidedashed[3]{
  \fill[fill=cubecolor, draw=black,shift={(\xaxis:#1)},shift={(\yaxis:#2)},
  shift={(\zaxis:#3)}] [dashed](0,0) -- (0,-1) -- (-0.5,-0.75) --(-0.5,0.25)--(0,0);
}

\newcommand\rightsidedashed[3]{
  \fill[fill=cubecolor, draw=black,shift={(\xaxis:#1)},shift={(\yaxis:#2)},
  shift={(\zaxis:#3)}] [dashed](0,0) -- (1,0) -- (1,-1) --(0,-1) -- (0,0);
}

\newcommand\cubedashed[3]{
  \topsidedashed{#1}{#2}{#3} \leftsidedashed{#1}{#2}{#3} \rightsidedashed{#1}{#2}{#3}
}

\newcommand\ppsansnumbersdashed[2]{
 \setcounter{x}{0}
 \foreach \a in {#2} {
    \addtocounter{x}{1}
    \setcounter{y}{-1}
    \foreach \b in \a {
      \addtocounter{y}{1}
      \ifthenelse{\b=0}{\addtocounter{y}{0}}{
        \FPeval{\result}{clip(#1-\the\value{y}-1)}
        \cubecolors{\b}
        \ifthenelse{\result=0\OR\value{y}=0}
        {
        \cubedashed{\value{x}}{\value{y}}{0};
        \FPeval{\result}{clip(#1-\the\value{y}-1)}
        \draw[draw=black,shift={(\xaxis:\value{x})},shift={(\yaxis:\value{y})}, shift={(\zaxis:0)}] (0.5,-0.5) node {};}
        {
        \cube{\value{x}}{\value{y}}{0};
        \FPeval{\result}{clip(#1-\the\value{y}-1)}
        \draw[draw=black,shift={(\xaxis:\value{x})},shift={(\yaxis:\value{y})},shift={(\zaxis:0)}] (0.5,-0.5) node { };}}
    }
  }
}

\usepackage{lipsum}
\usepackage{tikz, graphicx, ifthen}
\usepackage{cleveref}
\title[Staircase diagrams and smooth permutations]{Staircase diagrams, pattern avoidance, and smooth permutations}

\author{Faqruddin Ali Azam}\address{Math \& Science Division\\ Lewis-Clark State College  \\ Lewiston, Idaho 83501 \\ U.S.A. }\email{sazam@lcsc.edu}

\author{Edward Richmond}\address{Department of Mathematics\\ Oklahoma State University  \\ Stillwater, Oklahoma 74078 \\ U.S.A. }\email{edward.richmond@okstate.edu}

\begin{document}

\maketitle

\begin{abstract}A well known result due to Lakshmibai and Sandhya states that smooth Schubert varieties of type A correspond to permutations that avoid the patterns 3412 and 4231. It was later shown by the second author and Slofstra that staircase diagrams over a simply-laced Dynkin diagram are in bijection with smooth Schubert varieties of the corresponding type. We explore how the poset structure of staircase diagrams is connected with pattern avoidance on smooth permutations. As an application, we enumerate several subclasses of smooth permutations which are characterized by pattern avoidance. These subclasses include the class of polished permutations which were studied by Gaetz and Gao.
\end{abstract}


\section{Introduction}

Let $\mfS_{n}$ denote the symmetric group of permutations on $[n]:=\{1,2,\ldots, n\}.$  One important collection of permutations are \textbf{smooth permutations} which correspond to smooth Schubert varieties of type $A$.  In \cite{LS90}, Lakshmibai and Sandhya prove that smooth Schubert varieties are classified by permutation pattern avoidance.  In \cite{Ca94}, Carrell proves that smooth Schubert varieties are also characterized by rank symmetry of the Bruhat interval of the corresponding permutation.  Together, these results give the following theorem on smooth permutations. 

\begin{theorem} [\cite{Ca94, LS90}] Let $w\in \mfS_{n}$.  Then the following are equivalent:
\begin{enumerate}
    \item The Schubert variety of $w$ is smooth.
    \item The permutation $w$ avoids patterns $3412$ and $4231$.
    \item The Bruhat interval $[e,w]$ is rank symmetric.
\end{enumerate}
\end{theorem}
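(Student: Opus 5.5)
The plan is to prove the cycle $(1)\Rightarrow(3)\Rightarrow(2)\Rightarrow(1)$, using the standard tools of Schubert calculus: the Bruhat decomposition $X_w=\bigsqcup_{v\le w} BvB/B$ into cells of dimension $\ell(v)$, and Poincaré duality.

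\textbf{Step $(1)\Rightarrow(3)$.} If $X_w$ is smooth, it is a smooth projective variety of complex dimension $\ell(w)$. Its cohomology is concentrated in even degrees, and
\[
\dim H^{2i}(X_w)=\#\{v\le w:\ \ell(v)=i\},
\]
since the Schubert cells form a paving by affine spaces. Poincaré duality gives $\dim H^{2i}=\dim H^{2(\ell(w)-i)}$, so the rank generating function of $[e,w]$ is palindromic. (For this direction rational smoothness suffices.)

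\textbf{Step $(3)\Rightarrow(2)$.} Argue by contrapositive: if $w$ contains $3412$ or $4231$, produce a failure of rank symmetry. I would use the Carrell--Peterson criterion: $[e,w]$ is rank symmetric iff for all $v\le w$ the Bruhat graph degree satisfies
\[
\#\{t\in T:\ vt\le w\}=\ell(w).
\]
Rank symmetry forces this at $v=e$, where the condition reads $\#\{t\in T: t\le w\}=\ell(w)$. A pattern occurrence produces a transposition below $w$ beyond those counted by inversions. For $4231$, the transposition $(1\,4)$ lies below $w$ (e.g.\ via the 6 reflections below the pattern, compared with its 5 inversions), and this extra reflection survives embedding, since Bruhat order and reflections lift along patterns. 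For $3412$, both $(1\,3)$ and $(2\,4)$ combine into extra reflections below.

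This is the step I expect to be the main obstacle, because the Carrell--Peterson equivalence of rank symmetry with the degree condition itself needs Kazhdan--Lusztig or Deodhar-type input. I would try first to cite Carrell's work \cite{Ca94}, which establishes exactly this, and then do the combinatorial count of reflections $t\le w$ versus inversions directly by a pattern-lifting argument.

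\textbf{Step $(2)\Rightarrow(1)$.} Follow the Gasharov/Ryan approach. If $w$ avoids $3412$ and $4231$, then either $w$ or $w^{-1}$ has a "corner" structure: letting $n=w(d)$, the entries after position $d$ are decreasing, or dually. Then $X_w$ is an iterated fiber bundle whose fibers are Grassmannians (in fact projective spaces), with base $X_{w'}$ for $w'\in\mfS_{n-1}$ again avoiding both patterns. Induction on $n$ gives smoothness, since a fiber bundle with smooth base and smooth fiber is smooth. Alternatively, for the complete route I would invoke Lakshmibai--Sandhya \cite{LS90}, whose tangent-space computation via standard monomial theory shows that $X_w$ is singular at the identity point exactly when a pattern occurs; this also gives $(1)\Rightarrow(2)$ directly.
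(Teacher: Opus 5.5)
The paper gives no proof of this theorem: it quotes Lakshmibai--Sandhya for (1)$\Leftrightarrow$(2) and Carrell for (1)$\Leftrightarrow$(3). The latter is the Carrell--Peterson criterion, together with the fact that in type A rational smoothness is equivalent to smoothness.

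Your (1)$\Rightarrow$(3) is correct. Your (2)$\Rightarrow$(1) is an acceptable sketch resting on Gasharov's corner lemma and Ryan's fibration. Using Carrell--Peterson to extract the identity-degree condition $\#\{t\in T: t\le w\}=\ell(w)$ from rank symmetry is also fine.

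The gap is the remaining claim in (3)$\Rightarrow$(2): that an occurrence of $3412$ or $4231$ forces $\#\{t\in T: t\le w\}>\ell(w)$. Your pattern-lifting argument does not prove this, for two reasons.

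\emph{No baseline to be ``extra'' against.} There is no injection from inversions of $w$ into reflections below $w$. For $w=3412$, the inversion in positions $1,4$ gives $t_{14}=4231\not\le 3412$. The surplus $5>4$ arises because the non-inversions $t_{12},t_{34}$ lie below $w$ while the inversion $t_{14}$ does not; $t_{13}$ and $t_{24}$ are ordinary inversions. Likewise, $t_{14}$ is an inversion of $4231$ (indeed $4231=t_{14}$), and the only non-inversion reflection below $4231$ is $t_{23}$.

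\emph{Reflections do not lift along occurrences.} The permutation $w=425631$ contains $4231$ in positions $1,2,5,6$. Yet neither $t_{16}=623451$ nor $t_{25}=153426$ lies below $w$: by the tableau criterion, $6>4$ in the first prefix, and $\{1,5\}$ fails against $\{2,4\}$ in the second. Here $\#\{t\le w\}=10>9=\ell(w)$, but the surplus is not located at the pattern. What does lift is $u\le p\Rightarrow w_u\le w$, where $w_u$ is obtained by rearranging the entries of the occurrence according to $u$. That statement concerns elements near $w$, not reflections near $e$.

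This cannot be patched by a short argument. By the type A tangent space formula $\dim T_{eB}X_w=\#\{t\in T:t\le w\}$, and since the singular locus of $X_w$ is closed and $B$-stable (so it contains $eB$ if nonempty), your claim is equivalent to the direction ``pattern $\Rightarrow$ singular'' of Lakshmibai--Sandhya. That is the substance of the theorem itself.

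The simplest fix is to reroute the cycle:
\begin{enumerate}
\item Prove (3)$\Rightarrow$(1) by Carrell--Peterson plus Deodhar's theorem that rationally smooth type A Schubert varieties are smooth. Alternatively, go from rank symmetry to $\#\{t\le w\}=\ell(w)$, then to smoothness of $X_w$ at $eB$, then to smoothness of $X_w$, using the two facts just quoted.
\item Take (1)$\Rightarrow$(2) from Lakshmibai--Sandhya, as your last sentence already suggests.
\end{enumerate}
Together with your (2)$\Rightarrow$(1), this closes the equivalence. The hard content then resides in the cited results, which matches the paper's attribution.
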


In \cite{Ry87}, Ryan proved that smooth Schubert varieties of type A are iterated fiber-bundles of Grassmannian varieties.  In \cite{Ha92}, Haiman uses Ryan's fiber-bundle structure to calculate the generating function for smooth permutations (this generating function also appears in \cite{BMB07}).   

\begin{theorem}[\cite{BMB07, Ha92}]\label{thm:smooth_enumeration}
For $n\geq 0$, let $a_n:=\#\{w\in \mfS_{n+1}\ |\ \text{$w$ avoids $3412$ and $4231$}\}$.  Then the generating function of the sequence $a_n$ is 
$$\sum_{n\geq 0} a_n\,x^n=\frac{1-5x+4x^2+x\sqrt{1-4x}}{1-6x+8x^2-4x^3}.$$
\end{theorem}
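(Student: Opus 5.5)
We have to prove the generating function for smooth permutations. In this paper the natural route is through staircase diagrams: by the Richmond–Slofstra bijection, smooth permutations in $\mfS_{n+1}$ correspond to staircase diagrams on the path Dynkin diagram $A_n$ (nodes $1,\dots,n$). So $a_n$ is the number of staircase diagrams over $A_n$, and I will count these directly by building a functional equation. Alternatively, one can follow Haiman and use Gasharov's/Ryan's recursive description: every smooth $w$ either has $n+1$ or its position forming a Grassmannian-type fiber bundle. The staircase-diagram count is cleaner, so I commit to it.

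\textbf{Step 1.} Recall that a staircase diagram on a path is a collection of intervals (blocks) of nodes. Each block is a connected set, and any two blocks overlap only in a ``staircase'' way: ordered left to right, consecutive blocks either overlap or are adjacent, and the covering relations form chains that go increasing or decreasing. The union of blocks is a subset of nodes, whose connected components contribute independently. First I count \emph{connected} staircase diagrams on a path of $m$ nodes, say $c_m$, with generating function $C(x)$. Then the full diagrams satisfy a sequence decomposition. Reading nodes left to right, each node is either unused, or it begins a connected component of some length $m$, and components must be separated by at least one unused node. This gives $A(x)$ as a rational function of $x$ and $C(x)$.

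\textbf{Step 2.} Counting connected diagrams is the core. A connected staircase diagram on a path is a sequence of blocks $B_1,\dots,B_k$ with $\min B_i<\min B_{i+1}$ and $\max B_i<\max B_{i+1}$, where consecutive blocks overlap or abut. Each consecutive pair is labeled by the direction of the Bruhat-order cover, $B_i\prec B_{i+1}$ or $B_i\succ B_{i+1}$. Within a maximal run of one direction, the overlap condition forbids $B_{i}\cap B_{i+2}\ne\emptyset$ in a way that yields a Dyck-path / ballot structure. I expect counting such runs to produce the Catalan generating function $\frac{1-\sqrt{1-4x}}{2x}$, which is where $\sqrt{1-4x}$ enters. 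Gluing runs alternating in direction is then a rational operation in this Catalan series.

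\textbf{Step 3.} Substitute and simplify. Rationalize so that $\sqrt{1-4x}$ appears only linearly in the numerator, and check that the denominator becomes $1-6x+8x^2-4x^3$. As a sanity check, verify the first terms $a_0=1$, $a_1=2$, $a_2=6$, $a_3=22$ (the count of $3412,4231$-avoiders in $\mfS_4$, namely $24-2$), and $a_4=88$.

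The main obstacle is Step 2: pinning down exactly which block configurations are allowed on a path, including the conditions at direction changes and the case where blocks only abut. Getting this bookkeeping right is what makes the algebra land on the stated closed form. If it proves messy, I will fall back on Haiman's recursion via Gasharov's factorization $w=$ (Grassmannian-type piece)$\cdot w'$ with $w'\in\mfS_n$ smooth. That recursion gives a quadratic system whose solution is the same formula.
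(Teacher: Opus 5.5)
Your route is the one the paper itself indicates after \Cref{prop:counting_fs_inc_Catalan}: feed $Z^+(x)=\sum_{n\ge1}C_{n-1}x^n=\frac{1-\sqrt{1-4x}}{2}$ through \Cref{thm:connected2chains_gf} and \Cref{thm:str_conn_2_all_SD}. Your Step 1 is exactly \Cref{eqn:Z_prime_2_Z}. But Step 2 carries the entire content, you leave it as an expectation, and what you do say about it is wrong. Inside a monotone run nothing forbids $B_i\cap B_{i+2}\neq\varnothing$; for example, $\{[1,3]\prec[2,4]\prec[3,5]\}$ is a valid staircase diagram. The only extra condition in \Cref{def:staircase} sits at a turn $B_{i-1}\prec B_i\succ B_{i+1}$ (or its reverse), where $B_{i-1}\cup B_{i+1}$ must be disconnected. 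Imposing your condition would undercount: it discards exactly that chain from the $C_4=14$ fully supported, strongly connected increasing chains on $[5]$. The Catalan count for runs comes instead from the bijection $\DyckP$. Under it, block sizes $|B_i|$ are peak heights and overlaps $|B_i\cap B_{i+1}|$ are valley heights, so strong connectivity means no valley at height $0$.

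The second missing piece is the gluing, and it is not a formality. Consecutive maximal runs share their turning block and are coupled through the turn condition, so you cannot concatenate them independently. The paper makes two cuts. First it cuts at abutments ($B_i\cap B_{i+1}=\varnothing$ with $B_i\cup B_{i+1}$ connected); there the direction is free and the turn condition holds automatically, giving $Z'=\overline Z/(1-2\overline Z)$. Then, in a strongly connected diagram, it splits each turning block $B_y$ into two pieces: $B_y\cap B_{y-1}$ ends a broken staircase, and $B_y\setminus B_{y-1}=[r_{y-1}+1,r_y]$ starts the remainder. The turn condition, $l_{y+1}\ge r_{y-1}+2$, is precisely what makes the remainder a genuine staircase diagram. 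Broken staircases on $[k]$ are counted by $z^+_{k+1}-z^+_k$, and together these give $\overline Z=xZ^+/(2x-(1-x)Z^+)$. You need this mechanism or an equivalent one; otherwise Step 3 has nothing to substitute.

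For Step 3, eliminating $Z'$ from \Cref{eqn:Z_bar_2_Z_prime} and \Cref{eqn:Z_prime_2_Z} gives $Z=\frac{1-\overline Z}{1-x+(x-2)\overline Z}$. The numerator $1-2\overline Z_Q$ printed in \Cref{thm:str_conn_2_all_SD} would give $z_1=1$. With the correct numerator you get $Z=\frac{2x-Z^+}{2x(1-x)-Z^+}$, which rationalizes to exactly the stated formula. Your Gasharov/Haiman fallback is likewise asserted rather than derived.
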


The goal of this paper is to calculate generating functions for certain subclasses of smooth permutations which are characterized by pattern avoidance.  One such subclass are polished permutations.  We say a permutation $w$ is \textbf{polished} if the corresponding Bruhat interval $[e,w]$ is self-dual as a poset.  Since self-duality implies rank symmetry, polished permutations form a subclass of smooth permutations.  The following is proved by Gaetz and Gao in \cite{GG20}.

\begin{theorem}[\cite{GG20}]\label{thm:GG_polished}
Let $w\in \mfS_{n}$.  Then the following are equivalent:
\begin{enumerate}
\item The permutation $w$ is polished (i.e. $[e,w]$ is self-dual).
\item The permutation $w$ is smooth and avoids patterns $34521$, $54123$, $45321$, and $54312$.
\end{enumerate}
\end{theorem}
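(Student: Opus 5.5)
Since polishedness is defined through self-duality of $[e,w]$ and the second condition is a pattern-avoidance statement, I would handle the two directions separately. Throughout I use the fact that self-duality implies rank symmetry, which by the Carrell--Lakshmibai--Sandhya theorem forces $w$ to be smooth.

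For (1)$\Rightarrow$(2), the first step is to show that polishedness is inherited by patterns, at least among smooth permutations. Concretely, I would prove: if $w$ is smooth and contains a pattern $u$ sitting in a suitable position, then $[e,u]$ appears as an interval, or as a retract compatible with duality, inside $[e,w]$. The natural setting for this is Ryan's iterated Grassmannian fiber-bundle structure. If $w$ has a BP decomposition $w=vu$ with respect to a parabolic subgroup, then $[e,w]$ is combinatorially a ``twisted product'' of $[e,v]^J$ and $[e,u]$. Self-duality of the total interval should then force self-duality of the fiber interval and of the base. With this in hand, the argument reduces to checking directly that the four intervals $[e,34521]$, $[e,54123]$, $[e,45321]$ and $[e,54312]$ in $\mfS_5$ are smooth but not self-dual. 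These form two inverse pairs, and the pairs are related by the automorphism $w\mapsto w_0ww_0$. For each one I would exhibit an invariant that differs between the poset and its dual, for instance the number of elements of rank $2$ covering exactly one atom, or the number of coatoms lying above a fixed number of atoms.

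For (2)$\Rightarrow$(1), I would induct on $n$, again using the Gasharov/Ryan decomposition. A smooth $w$ has a BP decomposition in which the base $W^J$-part is a maximal-length element of a Grassmannian quotient, so $[e,v]^J$ is a full Grassmannian Bruhat order. That order is self-dual: complementing a partition in its box is an order-reversing bijection. The fiber $u$ is a smaller smooth permutation that still avoids the four patterns, so $[e,u]$ is self-dual by induction. The key step is to show that avoiding $34521, 54123, 45321, 54312$ is exactly what forces the decomposition to be an honest product, or a product twisted in a symmetric way. Then the dualities of base and fiber glue to a duality of $[e,w]$. In the staircase-diagram language that comes later in the paper, this should correspond to every block overlapping its neighbours in a controlled manner.

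The main obstacle is this gluing step: building an explicit antiautomorphism of $[e,w]$ out of antiautomorphisms of the pieces when the bundle is not a product. My first attempt would be to take $x=x^Jx_J\mapsto (w_0^J x^J)^{*}\cdot \phi(x_J)$, where $\phi$ is the inductive duality on $[e,u]$, and verify order reversal using the subword property. I would then pinpoint where the pattern hypotheses make the twisting compatible. If that fails, I would fall back on the characterization of Gaetz--Gao via products of maximal elements of parabolic subgroups.
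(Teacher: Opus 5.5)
The paper does not prove this theorem. It quotes it from \cite{GG20} and uses it, together with \Cref{thm:overlaps} and \Cref{thm:strong_chains}, only to obtain \Cref{cor:polished_staircases}. Judged on its own terms, your proposal has a genuine gap in each direction.

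\textbf{Direction (1)$\Rightarrow$(2).} The heredity step is both unproved and insufficient.

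It is unproved because an anti-automorphism $\varphi$ of $[e,w]$ carries the lower interval $[e,u]$ to the upper interval $[\varphi(u),w]$. So self-duality of $[e,w]$ tells you nothing about $[e,u]^*$.

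It is insufficient because the obstruction to self-duality lives in the twisting, not in the pieces. For example, $45321=\Lambda(\mcOv^-_2)$ has a BP decomposition with fiber $[e,4321]=\mfS_4$ and base a $4$-element chain. Both pieces are trivially self-dual, yet the total interval is not.

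Forbidden patterns also need not appear as fibers. Take $564321=\Lambda(\mcOv^-_3)$, which contains $45321$. The two parabolic factorizations whose fiber is order-isomorphic to $45321$ (delete the last position, or delete the value $1$) both violate the BP condition. Its BP decompositions with respect to maximal parabolics all have fiber $54321$. The same holds for every $\Lambda(\mcOv^\pm_q)$ with $q\ge 3$.

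So even with a heredity lemma, your reduction does not end at four intervals in $\mfS_5$; it leaves infinite families to check by hand. What is missing is a certificate of non-self-duality that works uniformly for every smooth $w$ whose staircase diagram has a $2$-overlap or a strongly connected $3$-chain. Gaetz--Gao's route supplies one: a top-heaviness inequality for the numbers of cover relations between consecutive ranks of $[e,w]$, together with a characterization of its equality case.

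Two smaller slips:
\begin{itemize}
\item Your first proposed invariant is vacuous. Length-$2$ Bruhat intervals are diamonds, so every rank-$2$ element covers exactly two atoms.
\item The map $w\mapsto w_0ww_0$ acts as inversion on each pair $\{34521,54123\}$ and $\{45321,54312\}$. It does not relate the two pairs, so the chain-type and overlap-type patterns must be handled separately.
\end{itemize}

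\textbf{Direction (2)$\Rightarrow$(1).} The gluing step you leave open is the entire content. Your fallback, Gaetz--Gao's product characterization, is the result being proved.

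The twist is genuinely present even for polished $w$. Take $w=4312=\Lambda(\mcCh^+_2)=(s_2s_3)\cdot w_0(\{s_1,s_2\})$. Its base is the chain $e<s_3<s_2s_3$ and its fiber is $\mfS_3$. But $[e,w]$ cannot be the product poset. In a product of a chain $c_0<c_1<c_2$ with $[e,u]$, the element $(c_2,e)$ has rank $2$ and covers only one element, which is impossible in Bruhat order.

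So you would need two things:
\begin{itemize}
\item an explicit description of when $x^Jx_J\le y^Jy_J$ in terms of the components;
\item a proof that a map like $x^Jx_J\mapsto\psi(x^J)\varphi(x_J)$ reverses that order exactly under the pattern hypotheses.
\end{itemize}
By \Cref{cor:polished_staircases}, those hypotheses say that adjacent blocks overlap in at most one box and every block is maximal or minimal. Consequently, for each maximal block $B$, the set $J(B)$ consists of at most the two endpoints of $B$. That is the structural input any such verification would have to exploit. The proposal does not identify it or carry out either step.
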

Our first result is a calculation of the generating function for polished permutations.
\begin{theorem}\label{thm:polished_GF}
Let $p_n:=\#\{w\in \mfS_{n+1}\ |\ \text{$w$ is polished}\}$.  Then
\begin{equation}\label{eqn:polished_GF}
\sum_{n\geq 0} p_n\,x^n=\frac{1-3x+x^2}{1-5x+5x^2-2x^3}.
\end{equation}

\end{theorem}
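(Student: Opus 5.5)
We plan to use the bijection of Richmond and Slofstra between smooth permutations in $\mfS_{n+1}$ and (labelled) staircase diagrams over the Dynkin diagram of type $A_n$, i.e. the path with vertices $1,\dots,n$. A smooth $w$ has a leftmost or rightmost Grassmannian-type decomposition, and the staircase diagram records this iterated fiber-bundle structure: its blocks are intervals $[a,b]$ of the path, partially ordered into a ``staircase''. The first step is to translate the four extra patterns $34521,54123,45321,54312$ of \Cref{thm:GG_polished} into local conditions on the staircase diagram. The expectation is that each of these patterns appears exactly when some block sits above or below two overlapping neighbouring blocks in a way that forces a non-maximal parabolic quotient, i.e. when a chain of three blocks $B_1\prec B_2\prec B_3$ (or the reverse) overlaps in a specific way. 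Equivalently, polished permutations should be those whose staircase diagram has every block either disjoint from, or covering in a ``symmetric'' way, its neighbours. The condition one wants is that the diagram decomposes into pieces each of which is a longest element of a parabolic subgroup or a simple staircase of the form $w_0(J)w_0(J\cap K)w_0(K)$. We would check this by inspecting the minimal staircase diagrams of the four patterns, which live over $A_4$, and showing that a diagram contains such a configuration iff $w$ contains the pattern.

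Next, we enumerate the restricted diagrams by a transfer-matrix/decomposition argument along the path. Reading the path left to right, a staircase diagram over $A_n$ is either empty on vertex $1$, giving a factor $x$ times the generating function for $A_{n-1}$, or begins with a maximal staircase component ending at some vertex. Under the polished restriction, components should be chains in which consecutive blocks overlap and the partial order is monotone. This lets us write a system of linear equations for generating functions $P(x)$, together with an auxiliary $Q(x)$ that counts diagrams whose last block is ``open'', i.e. can still be extended with an overlapping block in a fixed direction. Because the restriction forbids the branching that produces the Catalan-type $\sqrt{1-4x}$ term in \Cref{thm:smooth_enumeration}, the system should be linear with rational solution. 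Solving it should give $(1-3x+x^2)/(1-5x+5x^2-2x^3)$.

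As a sanity check we will verify the first terms: $p_0=1$, $p_1=2$, $p_2=6$, $p_3=22$ (all smooth in $\mfS_4$), and $p_4=88-4=84$. Expanding the series gives $1,2,6,22,84$, which matches.

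The main obstacle is the first step: proving that the pattern conditions are exactly equivalent to the local staircase conditions, in both directions. Pattern containment does not obviously localize to three adjacent blocks, so we would first try to prove a general lemma that pattern containment of a smooth pattern $u$ in smooth $w$ corresponds to an embedding of staircase diagrams, obtained via restriction to subintervals and the poset structure. We would then apply that lemma to the four patterns.
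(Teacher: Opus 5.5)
You correctly identify the translation step as the crux, but the proposal never carries it out, and the condition you anticipate is not the right one. Only two of the four patterns come from three-block chains. The patterns $54123=\Lambda(\{[1,2]\prec[2,3]\prec[3,4]\})$ and $34521=\Lambda(\{[1,2]\succ[2,3]\succ[3,4]\})$ detect three consecutive blocks forming a monotone chain with nonempty consecutive intersections. The other two, $54312=\Lambda(\{[1,3]\prec[2,4]\})$ and $45321=\Lambda(\{[1,3]\succ[2,4]\})$, are two-block diagrams and detect an overlap $|B_i\cap B_{i+1}|\geq 2$; this condition is missing from your description. So the statement to prove is: $\Lambda(\mcD)$ is polished if and only if $\mcD$ has no consecutive blocks sharing two or more vertices and no strongly connected monotone chain of three consecutive blocks.

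The hard direction is not supplied by an unformulated ``general embedding lemma''. It is not even clear what that lemma should say: for instance, $\Lambda(\{[1,2]\succ[3,4]\})=32541$ contains $2143=\Lambda(\{[1],[3]\})$, although the blocks of the latter are incomparable. It is also far more than is needed. What works for these specific patterns is induction on the number of blocks:
\begin{itemize}
\item Remove the terminal block $B_k=[r,n-1]$. By passing to inverses you may assume it is maximal, with $B_{k-1}\cap B_k=[r,m-1]$.
\item The entries of $\Lambda(\mcD\setminus\{B_k\})$ with values in $[r,m]$ must stay decreasing in $\Lambda(\mcD)$, since otherwise $\Lambda(\mcD)$ contains $3412$. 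The entries appended for $B_k$ are decreasing too.
\item Hence an occurrence of $54312$ not already in $\Lambda(\mcD\setminus\{B_k\})$ uses one appended entry and three entries of the first decreasing run, forcing $|B_{k-1}\cap B_k|\geq 2$. An occurrence of $45321$ cannot use the appended entries at all.
\item For the chain patterns, first reduce to strongly connected $\mcD$ and cut at the last maximal block $B_d$. Show that an occurrence of $34521$ lies either in $\Lambda(\{B_1,\ldots,B_{d-1}\})$ or in $\Lambda$ of the decreasing chain $\{B_d,\ldots,B_k\}$, and that a decreasing chain with fewer than three blocks avoids it.
\end{itemize}

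The enumeration step has further gaps. Splitting only at vertices outside the support is not enough. Inside a fully supported piece, two consecutive blocks can be disjoint but adjacent, and each such junction carries an independent choice of $\prec$ or $\succ$. To test avoidance separately on the two sides, you need all four patterns to be strongly indecomposable, so that avoidance passes through the weakly decomposing factorization $\Lambda(\mcD)=\Lambda(\mcD_1)\Lambda(\mcD_2)$ (or $\Lambda(\mcD_2)\Lambda(\mcD_1)$). This gives $Z'=\overline{Z}/(1-2\overline{Z})$ and $Z=(1+Z')/(1-x-xZ')$, where $\overline{Z}$ counts fully supported, strongly connected admissible diagrams.

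Your description of those pieces (``chains in which consecutive blocks overlap and the partial order is monotone'') is also wrong: a strongly connected monotone $3$-chain is exactly what $34521$ and $54123$ forbid. A polished strongly connected diagram alternates $\prec,\succ,\prec,\ldots$, every overlap is a single vertex, and staircase axiom (2) forces every interior block to have at least three vertices. So for $k\geq 2$ blocks the map $\mcD\mapsto(|B_1|-1,|B_2|-2,\ldots,|B_{k-1}|-2,|B_k|-1)$ is $2$-to-$1$ onto compositions of $n-k+1$ into $k$ parts. This gives $\bar p_n=2F_{n-1}-1$ (with $F_0=F_1=1$) and $\overline{Z}=\frac{x-x^2+x^3}{(1-x)(1-x-x^2)}$, which substituted above yields $\frac{1-3x+x^2}{1-5x+5x^2-2x^3}$. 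Your coefficients $1,2,6,22,84$ are correct, but ``solving it should give'' plus agreement of five terms does not prove the identity.
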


The tool we use to prove \Cref{thm:polished_GF} are \textbf{staircase diagrams} (of type A) over the set of integers $[n]$.  Staircase diagrams were first studied in \cite{RS17} and provide a combinatorial model for fiber-bundle structures on smooth and rationally smooth Schubert varieties of any finite Lie-type.  We show that there is a correspondence between imposing combinatorial conditions on staircase diagrams and imposing pattern avoidance conditions on smooth permutations (see \Cref{thm:strong_chains} and \Cref{thm:overlaps}).  The advantage of this correspondence is that staircase diagrams exhibit many structures which can be used in enumeration that are not apparent from a pattern avoidance perspective.  As an application, we calculate the following two additional generating functions.

\begin{theorem}\label{thm:semi-polished_GF12}
Let $b_n:=\#\{w\in \mfS_{n+1}\ |\ w\text{ avoids $3412$, $4231$, $34521$,\text{ and } $54123$}\}$.  Then 
\begin{equation}\label{eqn:polished_type1_GF}
\sum_{n\geq 0}b_n\, x^n=\frac{1-5x+6x^2-4x^3}{1-6x+10x^2-8x^3+2x^4}.
\end{equation}
Let  $c_n:=\#\{w\in \mfS_{n+1}\ |\ w\text{ avoids $3412$, $4231$, $45321$,\text{ and } $54312$}\}$.  Then 
\begin{equation}\label{eqn:polished_type2_GF}
\sum_{n\geq 0}c_n\, x^n=\frac{1-4x+x^2}{1-5x+4x^2}.
\end{equation}
\end{theorem}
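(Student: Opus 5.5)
The plan is to use the staircase diagram model for smooth permutations from \cite{RS17}. Staircase diagrams over the path Dynkin diagram $[n]$ are in bijection with smooth permutations in $\mfS_{n+1}$, so $a_n$ counts staircase diagrams over $[n]$. Each forbidden pattern pair will be translated into a local combinatorial condition on the diagram. Then we enumerate diagrams satisfying that condition, using a transfer-matrix or first-block decomposition.

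\textbf{Step 1: translate the patterns.} For $34521$ and $54123$ I expect the condition to be that there is no \emph{strong chain}. This means no three blocks $B_1\prec B_2\prec B_3$ that overlap consecutively in a specified way, with the middle block extending beyond both ends in one direction. This is the content of \Cref{thm:strong_chains}. For $45321$ and $54312$ I expect the condition to restrict \emph{overlaps}: two comparable blocks may overlap in at most one vertex, or in a controlled way. This is \Cref{thm:overlaps}. Both theorems can be assumed. The correspondence rests on how a pattern occurrence forces a Grassmannian fiber-bundle step whose parabolic pieces nest in a particular way.

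\textbf{Step 2: set up the recursion.} A staircase diagram over $[n]$ is a sequence of interval blocks arranged in staircase fashion. I will decompose a diagram by its leftmost maximal staircase segment, or alternatively by where the first block ends and how the next block overlaps it. This gives generating functions $F(x)$ for all diagrams and auxiliary series $G_k(x)$ for diagrams whose first block has a prescribed overlap type. For the unrestricted case this should recover \Cref{thm:smooth_enumeration}, and that check will calibrate the decomposition. The $\sqrt{1-4x}$ there comes from Catalan-like freedom in the nesting of blocks inside a staircase.

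\textbf{Step 3: impose the conditions and solve.} The overlap condition of \Cref{thm:overlaps} kills the Catalan-type nesting, so the series become rational. For $c_n$ I expect a simple linear recursion, consistent with denominator $(1-x)(1-4x)$. For the strong-chain condition I expect a finite-state transfer matrix tracking whether the current staircase is increasing or decreasing and whether a strong chain is ``armed.'' This yields a rational function with quartic denominator. In each case I will verify the first several terms against direct pattern-avoidance counts, e.g.\ $b_n,c_n$ for $n\le 5$.

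The main obstacle is Step 3 for $b_n$. The strong-chain condition is nonlocal along a staircase, so the state space must be chosen carefully to avoid both overcounting and undercounting diagrams. I would first try recording only the last two blocks and their overlap direction, and check whether that suffices.
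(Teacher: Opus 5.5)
\textbf{There is a genuine gap: the proposal is a plan, and the enumeration that makes up the whole theorem is never carried out.} Once you grant \Cref{thm:overlaps}, \Cref{thm:strong_chains} and the bijection $\Lambda$, the statement is purely a count of staircase diagrams.

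Your Step 2 announces series $G_k(x)$ and a first-block decomposition, but never defines them or writes an equation. Step 3 postulates a transfer matrix with an unspecified ``armed'' state. The denominators $(1-x)(1-4x)$ and ``quartic'' are read off from the target formulas, and matching $b_n,c_n$ for $n\le 5$ cannot prove an identity of generating functions.

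Your translation of the conditions is also inaccurate, and the count depends on it exactly. A strongly connected $3$-chain is three \emph{consecutive} blocks with $B_{m+1}\prec B_{m+2}\prec B_{m+3}$ or $B_{m+1}\succ B_{m+2}\succ B_{m+3}$, and with $B_{m+1}\cap B_{m+2}\neq\varnothing\neq B_{m+2}\cap B_{m+3}$. There is no condition about a middle block ``extending beyond both ends.'' Avoiding $45321,54312$ means $|B_i\cap B_{i+1}|\le 1$ for all consecutive blocks. The strong-chain condition is therefore local, so the obstacle you single out is misdiagnosed; with the right reduction, the $b_n$ count is the easy one.

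You are missing three ideas.
\begin{enumerate}
\item \emph{Cut at disjoint consecutive blocks.} Cut $\mcD$ at every $i$ with $B_i\cap B_{i+1}=\varnothing$. Neither forbidden configuration crosses such a cut, and staircase condition (2) is automatic there. If $B_i\cup B_{i+1}$ is disconnected the pieces are independent. If it is connected, both $B_i\prec B_{i+1}$ and $B_i\succ B_{i+1}$ are allowed. This gives $Z=(1-2\overline Z)/(1-x+(x-2)\overline Z)$, where $\overline Z$ counts fully supported, strongly connected diagrams (\Cref{thm:str_conn_2_all_SD}).
\item \emph{The count for $b_n$.} A strongly connected diagram with no strongly connected $3$-chain must alternate $\prec,\succ$. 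Condition (2) then forces each interior block to contain a box lying in neither neighbour. Recording the sizes (private part, overlap, private part, $\ldots$) gives a $2$-to-$1$ map onto compositions of $n$ with $2k-1$ parts. Hence $\bar b_n=2^{n-1}-1$ for $n\ge 2$, and substituting into the formula above gives the first generating function.
\item \emph{The count for $c_n$.} All consecutive overlaps in a strongly connected piece equal $1$. Long monotone chains now occur, and by condition (2) the direction may change at $B_i$ only when $|B_i|\ge 3$. The paper handles this by splitting into maximal chains and broken staircases (\Cref{thm:connected2chains_gf}). Increasing chains correspond to compositions of $n-1$, so $c^+_n=2^{n-2}$. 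This yields $\overline Z=x(1-x)/(1-2x-x^2)$ and then the second formula.
\end{enumerate}
Your outline becomes a proof only with an explicit argument of this kind, or a fully specified transfer matrix that you actually solve.
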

One interesting consequence of \Cref{eqn:polished_type2_GF} is the following corollary.
\begin{cor}\label{cor:d_n_is_c_n}
    Let $d_n:=\#\{w\in \mfS_{n+1}\ |\ w\text{ avoids $4123$ and $4321$}\}$.  Then $d_n=c_n$ from \Cref{thm:semi-polished_GF12}.
\end{cor}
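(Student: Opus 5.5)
The plan is to reduce \Cref{cor:d_n_is_c_n} to a comparison of closed forms. Partial fractions give
$$\frac{1-4x+x^2}{1-5x+4x^2}=\frac{1-4x+x^2}{(1-x)(1-4x)},$$
so a short computation should give an explicit formula for the coefficients $c_n$. The expected shape is $c_n=(4^{m}+2)/3$, with $m$ depending on $n$ and some boundary terms for small $n$. The exact offset relative to the indexing $w\in\mfS_{n+1}$ has to be fixed by checking the first few coefficients against a direct count: $c_0=1$ and $c_1=2$, since every permutation of $\mfS_1,\mfS_2$ is smooth. The cleanest target is the linear recurrence $c_{n}=5c_{n-1}-4c_{n-2}$ together with these initial values. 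Equivalently, since $1-5x+4x^2=(1-x)(1-4x)$, the target is a recurrence of the form $c_n=4c_{n-1}-\text{const}$ for $n$ large enough.

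Next I will show that $d_n$ satisfies the same recurrence with the same initial values. My first choice is to cite the known enumeration of the symmetry class of the pattern pair $\{4123,4321\}$. Permutations avoiding two patterns of length four were classified and enumerated by Kremer and Shiu, and by Le, using finite transition matrices. After applying inverse, reverse, and complement to match their tables, this class should have a rational generating function with denominator $(1-x)(1-4x)$. If the citation is not used, I will give a self-contained argument.

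The self-contained argument decomposes $w\in\mfS_{n+1}$ avoiding $4123$ and $4321$ according to the value $n+1$ and the entries to its right. Every entry to the right of $n+1$ must avoid $123$ and $321$. By Erd\H{o}s--Szekeres this forces at most $4$ entries to the right of $n+1$. So $w=u\,(n+1)\,v$ with $|v|\le 4$, and $u$ avoids both patterns. I then count the ways to interleave $v$ with the values of $u$. This should give a recursion of the form $d_n=4d_{n-1}+(\text{bounded correction})$, or a finite transfer-matrix system whose states record the relative order of the short tail $v$. Solving that system yields a rational generating function, which I then compare with \eqref{eqn:polished_type2_GF}.

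The main obstacle is this second step. The condition that $u\,(n+1)\,v$ avoid $4123$ and $4321$ involves occurrences where the role of the ``$4$'' is played by an element of $u$, not by $n+1$. So the interleaving of $v$ with $u$ is not free, and the state space has to record enough about $u$ to control this. I would first try removing $n+1$ and reinserting it, to check whether the number of admissible insertion sites is exactly $4$ apart from a bounded set of exceptional configurations. That would directly give $d_n=4d_{n-1}-\text{const}$. As a sanity check, I would verify the first several values numerically, for instance $d_3=22$ with $w\in\mfS_4$ (the $24$ permutations minus $4123$ and $4321$), before trusting the recursion.
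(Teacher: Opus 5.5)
Your main route is exactly the paper's argument: read off $c_n$ from \eqref{eqn:polished_type2_GF}, take the enumeration of the class $\{4123,4321\}$ from Kremer--Shiu \cite{KS03}, and observe that the two sequences coincide, so the self-contained fallback is not needed. Your caution about the offset is well placed, because the displayed series literally gives $c_1=1\neq|\mfS_2|$ (it equals $1+x\cdot\frac{1-3x}{(1-x)(1-4x)}$, i.e.\ it is indexed by $\mfS_n$), while your normalization $c_0=1$, $c_1=2$, $c_n=5c_{n-1}-4c_{n-2}$ gives $c_n=(4^n+2)/3=1,2,6,22,86,\ldots$, which matches the Kremer--Shiu count for $d_n$.
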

The proof of \Cref{cor:d_n_is_c_n} is due to the fact that the sequences $c_n$ and $d_n$ share the same generating function found in \Cref{eqn:polished_type2_GF}.  A formula for $d_n$ is calculated by Kremer and Shiu in \cite{KS03}.  Recently, Bridges and Waite construct a direct bijection between these two sets of permutations in \cite{BW26}.

The permutation patterns found in \Cref{thm:GG_polished} and \Cref{thm:semi-polished_GF12} are especially well-behaved with respect to the structure of staircase diagrams.  One of our main goals is to characterize which permutations exhibit this type of compatibility.  As a result, we define and study two special types of permutations which are subclasses of indecomposable permutations.  The first class we call \textbf{strongly indecomposable permutations} (\Cref{def:wk_decompose}) and the second we call \textbf{chain indecomposable permutations} (\Cref{def:chain-indecomposable}).  We will see that the permutations found in \Cref{thm:GG_polished} and \Cref{thm:semi-polished_GF12} are both strong and chain indecomposable.  Moreover, they are the smallest examples in a larger family of permutations whose avoidance can be studied using staircase diagrams.  We also prove that the generating functions corresponding to avoiding these families of permutations are rational, similar to \Cref{thm:GG_polished} and \Cref{thm:semi-polished_GF12}.

The structure of this paper is as follows.  In \Cref{sec:decomp_perms}, we define the notion of a strongly indecomposable permutation and give some connections with pattern avoidance.  In \Cref{sec:staircase_diagrams} and \Cref{sec:sd_patterns}, we discuss staircase diagrams and how staircase poset structures correspond to pattern avoidance.  In \Cref{sec:GF_calc}, we define the notion of a chain indecomposable permutation and apply the techniques we develop in this paper to calculate several generating functions.  In particular, we prove \Cref{thm:polished_GF} and \Cref{thm:semi-polished_GF12}.  We show that many generating function calculations can be reduced to enumerative problems on Catalan objects such as Dyck paths that are bounded by various conditions. 

\subsection*{Acknowledgments} The authors would like to thank Mikl\'{o}s B\'{o}na and Alex Woo for helpful discussions.  The second author was supported by a grant from the Simons Foundation 941273.

\subsection*{AI disclosure} We used AI to assist us in the proofs of \Cref{cor:chain_rational} and \Cref{cor:overlap_rational}.  In particular, we asked Claude Opus 5.5 about the rationality of the generating function for the number of Dyck paths with certain constraints on the number of peaks and valley heights.  For \Cref{cor:chain_rational}, AI produced a correct formula for the generating function in question, but we eventually found a direct reference for this formula, so we cite these published results.  For \Cref{cor:overlap_rational}, AI  correctly determined that the generating functions in question are rational and produced a short argument.  We later found a reference that implies the rationality of these generating functions, but still included a proof (slightly modified from the AI proof) since the reference paper works in a much larger category of generating functions related to Dyck paths and does not mention our special case.  We did not use AI to assist us with any other results in this paper.  We also did not use AI to assist us in writing this paper.

\section{Strongly indecomposable permutations}\label{sec:decomp_perms}

In this section, we discuss decomposability of permutations in relation to pattern avoidance.  Let $w\in \mfS_n$ and $p\in \mfS_r$ for some $r\leq n$ and write $w=w(1)w(2)\cdots w(n)$ and $p=p(1)p(2)\cdots p(r)$ in one-line notation.  We say $w$ \textbf{contains the pattern} $p$ if there is a subsequence $(i_1<\cdots <i_r)$ such that $w(i_1)\cdots w(i_r)$ has the same relative order as $p(1)\cdots p(r)$.  Otherwise, we say $w$ \textbf{avoids the pattern} $p$. 


The symmetric group $\mfS_n$ is a Coxeter group generated by $S=\{s_1,s_2,\cdots,s_{n-1}\}$, where $s_i$ denotes the simple transposition $(i,i+1)$.  Let $\leq$ denote the Bruhat order on $\mfS_n$.  For $w\in \mfS_n$, define the \textbf{support set}
$$S(w):=\{i\in [n-1]\ |\ s_i\leq w\}.$$
In other words, $k\in S(w)$ if and only if $s_k$ appears in some (or all) expression of $w$ in terms of simple transpositions.
We say $w$ is \textbf{decomposable} in $\mfS_n$ if there exists $1\leq i<n$ such that 
$$\{w(1),\ldots, w(i)\}=[i].$$
If $w$ is not decomposable, then it is \textbf{indecomposable}.  We summarize the connection between decomposability, support sets, and pattern avoidance in the following lemma.
\begin{lemma}\label{lem:decomposable}
The permutation $w$ is decomposable in $\mfS_n$ if and only if there exists $1\leq i<n$ and a decomposition $w=w_1\cdot w_2$ such that $S(w_1)\subseteq [i-1]$ and $S(w_2)\subseteq [n-1]\setminus [i]$.

Furthermore, if $w=w_1\cdot w_2$ is decomposable in $\mfS_n$ as above, and $p$ is indecomposable in $\mfS_r$ for some $r\leq n$, then $w$ avoids $p$ if and only if $w_1$ and $w_2$ avoid $p$.
\end{lemma}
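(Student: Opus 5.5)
For the first part of \Cref{lem:decomposable}, the plan is to use that the parabolic subgroup generated by $\{s_j : j\neq i\}$ is exactly the set of permutations preserving $[i]$. Suppose $\{w(1),\dots,w(i)\}=[i]$. Let $w_1$ be the permutation agreeing with $w$ on $[i]$ and fixing every $j>i$. Let $w_2$ fix every $j\le i$ and agree with $w$ on $\{i+1,\dots,n\}$. These two permutations commute and satisfy $w=w_1w_2$. The permutation $w_1$ lies in the copy of $\mfS_i$ generated by $s_1,\dots,s_{i-1}$, so any reduced word for it uses only those letters and $S(w_1)\subseteq[i-1]$. Likewise $S(w_2)\subseteq\{i+1,\dots,n-1\}=[n-1]\setminus[i]$.

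Conversely, if $w=w_1w_2$ with supports as stated, then each of $w_1$ and $w_2$ is a product of simple transpositions $s_j$ with $j\neq i$. Each such transposition maps $[i]$ to itself, hence so does $w$. Thus $\{w(1),\dots,w(i)\}=[i]$ with $1\le i<n$, and $w$ is decomposable. Here I use that $S(w)$ is the set of letters appearing in any reduced word, as stated just before the lemma.

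For the second part, in one-line notation $w$ is the concatenation of $w_1|_{[i]}$ on positions $1,\dots,i$ with the values of $w_2$ on positions $i+1,\dots,n$. Every entry in the first block is smaller than every entry in the second.

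The easy direction is that if $w_1$ contains $p$, then so does $w$. Indeed, $w_1(j)=w(j)$ for $j\le i$, and the positions $j>i$ of $w_1$ are fixed points larger than everything before them. An occurrence of $p$ in $w_1$ that uses some fixed points $j>i$ can be transported to $w$ as follows: replace the positions in the first block by the same positions in $w$, and replace the tail of fixed points by positions in $w$'s second block. The catch is that these must also form an increasing sequence exceeding the earlier values. Rather than risk this, I would argue directly using indecomposability of $p$. Any occurrence of $p$ in $w_1$ splits into a part in positions $\le i$ and a part in positions $>i$, and the latter has all values larger. So if both parts are nonempty, $p$ is decomposable. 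Hence the occurrence lies entirely within the first $i$ positions, where $w_1$ and $w$ agree. The same reasoning applies to $w_2$: an occurrence of the indecomposable $p$ lies entirely in the last $n-i$ positions, where $w_2=w$.

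The converse direction is handled identically. If $w$ contains $p$ at positions $i_1<\dots<i_r$, let $k$ be the number of these positions that are at most $i$. If $0<k<r$, then the first $k$ values of the occurrence are exactly the $k$ smallest, so $\{p(1),\dots,p(k)\}=[k]$, contradicting the indecomposability of $p$. Therefore the occurrence lies entirely in one block, and hence it is an occurrence in $w_1$ or in $w_2$.

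The main subtlety is just making the block structure explicit: decomposability of $w$ at $i$ forces every value in positions $\le i$ to be less than every value in positions $>i$. Once that is written down, the rest is straightforward.
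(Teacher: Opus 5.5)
The paper gives no proof of this lemma. It is stated as a summary of standard facts, with a pointer to Kitaev's book. The natural comparison is therefore with the paper's proofs of the weak analogues, \Cref{lem:wk_decomposable_support} and \Cref{lem:wk_decomposable_patterns}. Your argument is correct and follows the same template. You build the factorization by splitting $w$ at the cut $i$; here the two factors even commute, unlike in the weak case. You then rule out occurrences of $p$ that straddle the cut, since the first $k$ letters of such an occurrence would give $\{p(1),\dots,p(k)\}=[k]$ with $1\le k<r$.

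Two small points need tightening.

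\textbf{Arbitrary factorizations.} The second half of the lemma concerns any factorization with $S(w_1)\subseteq[i-1]$ and $S(w_2)\subseteq[n-1]\setminus[i]$, not only the one you constructed. You should say why $w_1$ agrees with $w$ on positions $1,\dots,i$ and $w_2$ agrees with $w$ on positions $i+1,\dots,n$. The reason is that $w_1$ is a product of $s_1,\dots,s_{i-1}$, so it fixes $i+1,\dots,n$, while $w_2$ fixes $1,\dots,i$. Equivalently, the factorization is unique.

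\textbf{Occurrences among the fixed points.} After excluding straddling occurrences in $w_1$, you conclude that the occurrence lies in the first $i$ positions. A priori it could instead lie entirely among the fixed points $i+1,\dots,n$ of $w_1$, and symmetrically among $1,\dots,i$ for $w_2$. That would force $p=12\cdots r$, which is indecomposable only for $r=1$, where the claim is trivial.

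With these one-line additions, and with the exploratory remark about ``transporting'' occurrences deleted, the proof is complete.
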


We call the factorization $w=w_1\cdot w_2$ in \Cref{lem:decomposable} a \textbf{decomposing factorization}.  For more background on decomposable/indecomposable permutations, see \cite{Ki11}.
 We define an analogue of the decomposable property  called \textbf{weakly-decomposable}. 
\begin{definition}\label{def:wk_decompose}
    We say a permutation $w$ is \textbf{left-weakly decomposable} in $\mfS_n$ if there exists an integer $1\leq i< n-1$ such that
    \[\{w(1),\ldots, w(i)\}\subseteq [i+1].\]
    A permutation $w$ is \textbf{weakly decomposable} if either $w$ or $w^{-1}$ is left-weakly decomposable.  
    If $w$ is not weakly decomposable, we say that $w$ is \textbf{strongly indecomposable}.  
\end{definition}

Note that any decomposable permutation is also weakly-decomposable.  The next lemma is an analogue of \Cref{lem:decomposable} in regard to support sets.

\begin{lemma}\label{lem:wk_decomposable_support}
A permutation $w$ is left-weakly decomposable in $\mfS_n$ if and only if there exist $1\leq i<n-1$ and a unique decomposition $w=w_1\cdot w_2$ such that $S(w_1)\subseteq [i]$ and $S(w_2)\subseteq [n-1]\setminus [i]$.
\end{lemma}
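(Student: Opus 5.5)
The plan is to translate both conditions into membership in standard parabolic subgroups and then compare. Write $W_J$ for the subgroup of $\mfS_n$ generated by $\{s_j \mid j\in J\}$. By the subword property of Bruhat order, $S(u)\subseteq J$ holds exactly when $u\in W_J$: some, equivalently every, reduced word of $u$ uses only letters $s_j$ with $j\in J$. Set $J_1=[i]$ and $J_2=[n-1]\setminus[i]$. Then:
\begin{itemize}
\item $W_{J_1}$ is the group of permutations of $[i+1]$ fixing every $j>i+1$;
\item $W_{J_2}$ is the group of permutations of $\{i+1,\ldots,n\}$ fixing every $j\le i$.
\end{itemize}
Throughout, the product is composition, $(w_1\cdot w_2)(j)=w_1(w_2(j))$, which matches the convention of \Cref{lem:decomposable}.

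For the ``if'' direction, suppose $w=w_1\cdot w_2$ with $w_1\in W_{J_1}$ and $w_2\in W_{J_2}$.
\begin{itemize}
\item For $j\le i$ we have $w_2(j)=j$, so $w(j)=w_1(j)$.
\item Since $w_1$ permutes $[i+1]$, this gives $w(j)\in[i+1]$.
\item Hence $\{w(1),\ldots,w(i)\}\subseteq[i+1]$, and $w$ is left-weakly decomposable with the same $i$.
\end{itemize}

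For the ``only if'' direction, assume $\{w(1),\ldots,w(i)\}\subseteq[i+1]$ with $1\le i<n-1$.
\begin{itemize}
\item Exactly one element $m$ of $[i+1]$ is missed by $w(1),\ldots,w(i)$.
\item Define $w_1$ by $w_1(j)=w(j)$ for $j\le i$, $w_1(i+1)=m$, and $w_1(j)=j$ for $j>i+1$. Then $w_1$ is a permutation of $[i+1]$, so $w_1\in W_{J_1}$.
\item Put $w_2=w_1^{-1}w$. For $j\le i$, $w_2(j)=w_1^{-1}(w(j))=j$, so $w_2$ fixes $[i]$ pointwise and lies in $W_{J_2}$.
\end{itemize}

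For uniqueness, suppose $w_1w_2=w_1'w_2'$ are two such factorizations. Then
\[
w_1'^{-1}w_1=w_2'w_2^{-1}\in W_{J_1}\cap W_{J_2}.
\]
An element of this intersection has support contained in $J_1\cap J_2=\emptyset$, so it is the identity. Therefore $w_1=w_1'$ and $w_2=w_2'$.

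The main point needing care is the multiplication convention. If the paper instead means $w=w_1\cdot w_2$ with $w_2$ applied first in the other sense (right action on positions versus values), the same argument applies after swapping the roles of $w_1$ and $w_2$. So I would fix the composition convention consistent with \Cref{lem:decomposable} at the outset. Everything else is routine.
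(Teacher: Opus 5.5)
Your proof is correct and is essentially the paper's argument: the ``if'' direction is identical, and your $w_1$ coincides with the paper's, since the missing value $m$ is exactly the paper's $x=\min\{w(i+1),\ldots,w(n)\}$. The differences are small improvements. Setting $w_2=w_1^{-1}w$ replaces the paper's explicit case formula for $w_2$, and the trivial intersection $W_{[i]}\cap W_{[n-1]\setminus[i]}=\{e\}$ gives uniqueness directly rather than via uniqueness of parabolic decompositions. Your closing hedge is moot, because the paper also reads $w_1\cdot w_2(k)$ as $w_1(w_2(k))$; under the opposite convention the statement would actually fail (e.g.\ $s_1\cdot s_2$ would be $312$ for $n=3$), so fixing the convention matters and is not just a relabeling of $w_1$ and $w_2$.
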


\begin{proof}
Suppose we have permutations $w_1$ and $w_2$ with $S(w_1)\subseteq [i]$ and $S(w_2)\subseteq [n-1]\setminus[i]$ for some $1 \leq i < n-1$.  Then for any $k\leq i$, we have
$w_1(k)\leq i+1$ and $w_2(k)=k,$ which implies that $w_1\cdot w_2(k)\leq i+1$.  This implies that $w_1\cdot w_2$ is left-weakly decomposable.  

Conversely, suppose that $w$ is left-weakly decomposable.  Then there exists $1\leq  i< n-1$ such that $w(k)\leq i+1$ for all $k\leq i$.  Define 
\[x=\min\{w(i+1),\ldots,w(n)\}\] and $j=w^{-1}(x)$.  Note that $x\leq i+1$ and $j\geq i+1$.  Define the permutations $w_1$ and $w_2$ by 
\begin{equation}\label{eqn:weak_facotorization}
w_1(k):=\begin{cases}
    w(k) & \text{if $k\leq i$}\\
    x & \text{if $k=i+1$}\\
    k & \text{if $k>i+1$}
\end{cases}\quad \text{and }\quad 
w_2(k):=\begin{cases}
    k & \text{if $k\leq i$}\\
    i+1& \text{if $k=j$}\\
    w(k) & \text{if $k> i+1$ and $k\neq j$}
\end{cases}
\end{equation}
Since $w_1(k)=k$ for all $k> i+1$ and we have $S(w_1)\subseteq [i]$.  Furthermore, since $w_2(k)=k$ for all $k\leq i$, we have $S(w_2)\subseteq [n]\setminus[i]$.  Finally, it is easy to verify that $w=w_1\cdot w_2$.  For uniqueness, we observe that $w=w_1\cdot w_2$ is a parabolic decomposition with respect to $S(w_2)$ and hence unique \cite[Proposition 2.4.4]{BB05}.  This completes the proof.
\end{proof}

Unlike decomposing factorizations, the weakly decomposing factors $w_1$ and $w_2$ found in \Cref{lem:wk_decomposable_support} do not necessarily commute.  However, \Cref{lem:wk_decomposable_support} implies that if $w$ is weakly decomposable, then there is a factorization $w=w_1\cdot w_2$ such that either 
\begin{itemize}
    \item $S(w_1)\subseteq [i]$ and $S(w_2)\subseteq [n-1]\setminus[i]$ or
    \item $S(w_1)\subseteq [n-1]\setminus[i]$ and $S(w_2)\subseteq [i]$ for some $1\leq i<n-1$.
\end{itemize} 
We call such a factorization of $w$ a \textbf{weakly-decomposing factorization}.  For a permutation $w\in S_{n}$, we represent the corresponding permutation matrix of $w$ with a {``\large$\bullet$"} in $(i,w(i))$ position where $(1,1)$ represents the northwest corner.

\begin{example}Consider the permutations $w=4152736$ and $w'=3714256$ whose matrices appear below (respectively).

$$\begin{tikzpicture}[scale=0.4]
\draw[fill=blue!20] (0,7) rectangle (4,2);
\draw[fill=blue!20] (4,0) rectangle (7,2);
\draw[step=1.0,black] (0,0) grid (7,7);
\fill (0.5,3.5) circle (7pt);
\fill (1.5,6.5) circle (7pt);
\fill (2.5,2.5) circle (7pt);
\fill (3.5,5.5) circle (7pt);
\fill (4.5,0.5) circle (7pt);
\fill (5.5,4.5) circle (7pt);
\fill (6.5,1.5) circle (7pt);
\end{tikzpicture}\hspace{1in}
\begin{tikzpicture}[scale=0.4]
\draw[fill=blue!20] (0,7) rectangle (5,3);
\draw[fill=blue!20] (5,0) rectangle (7,3);
\draw[step=1.0,black] (0,0) grid (7,7);
\fill (5.5,2.5) circle (7pt);
\fill (6.5,1.5) circle (7pt);
\fill (1.5,0.5) circle (7pt);
\fill (3.5,3.5) circle (7pt);
\fill (0.5,4.5) circle (7pt);
\fill (4.5,5.5) circle (7pt);
\fill (2.5,6.5) circle (7pt);
\end{tikzpicture}$$
Note that $w$ and $(w')^{-1}$ are both left-weakly decomposable with respect to $i=4$ and hence $w$, $w'$ are both weakly decomposable.  We highlight in blue how their permutation matrices are ``almost" block diagonal.  The corresponding weakly decomposing factorizations are
$$w=(s_3s_4s_2s_3s_1)\cdot(s_6s_5)\quad\text{and}\quad w'=(s_6s_5)\cdot(s_4s_2s_3s_4s_1s_2).$$

\end{example}

The combination of \Cref{def:wk_decompose} and \Cref{lem:wk_decomposable_support} give a simple way to check if a permutation has a weakly-decomposing factorization by only looking at the one-line notation.  The next lemma generalizes the second part of \Cref{lem:decomposable}.  

\begin{lemma}\label{lem:wk_decomposable_patterns}
Let $w=w_1\cdot w_2$ be a weakly decomposing factorization in $\mfS_n$.  If $p$ is strongly indecomposable in $\mfS_r$ for some $r\leq n$, then $w$ avoids $p$ if and only if $w_1$ and $w_2$ avoid $p$.
\end{lemma}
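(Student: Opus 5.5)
The plan is to reduce to the case where $w$ itself is left-weakly decomposable, and then to work directly with the explicit factors in \eqref{eqn:weak_facotorization}. For the reduction, note that $w$ contains $p$ if and only if $w^{-1}$ contains $p^{-1}$. The definition of strongly indecomposable is symmetric under inversion, so $p^{-1}$ is again strongly indecomposable. If the factorization has $S(w_1)\subseteq[n-1]\setminus[i]$ and $S(w_2)\subseteq[i]$, then $w^{-1}=w_2^{-1}\cdot w_1^{-1}$ is a factorization of the first type. Hence we may assume $w$ is left-weakly decomposable at $i$, with $x=\min\{w(i+1),\dots,w(n)\}$, $j=w^{-1}(x)$, and $w_1,w_2$ as in \eqref{eqn:weak_facotorization}; these are the factors by the uniqueness in \Cref{lem:wk_decomposable_support}. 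Two facts will be used throughout. First, $p$ is indecomposable, since every decomposable permutation is weakly decomposable. Second, $\{w(1),\dots,w(i)\}=[i+1]\setminus\{x\}$, so every value $w(k)$ with $k>i$ and $k\neq j$ is at least $i+2$.

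For the direction ``$w$ avoids $p$ $\Rightarrow$ $w_1,w_2$ avoid $p$'', I would show that every occurrence of $p$ in $w_1$ or $w_2$ transports to an occurrence in $w$.
\begin{itemize}
\item In $w_1$, the entries at positions $k>i+1$ are fixed points. They lie to the right of, and above, all entries at positions $\le i+1$. If an occurrence of $p$ used one of them, the last letter of $p$ would be $r$, which makes $p$ decomposable. So every occurrence lies in positions $1,\dots,i+1$. There $w_1$ has the same relative order as $w$ at positions $1,\dots,i,j$, because the last value is $x$ in both and $j\ge i+1$.
\item In $w_2$, the fixed points $1,\dots,i$ occupy the first positions with the smallest values, so any occurrence using them would make $p$ decomposable. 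On positions $i+1,\dots,n$, $w_2$ agrees with $w$ except that $x$ is replaced by $i+1$. By the second fact above, this preserves relative order.
\end{itemize}

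For the converse, suppose $w$ contains $p$ at positions $I$. Write $A=I\cap[i]$ and $B=I\setminus[i]$, and split into cases on $A$ and $B$.
\begin{itemize}
\item If $A=\emptyset$, the occurrence lies in positions $>i$, and by the relative-order observation above it is an occurrence in $w_2$.
\item If $B=\emptyset$, it is an occurrence in $w_1$.
\item Suppose $A$ and $B$ are both nonempty. If $j\notin B$, every value in $A$ is below every value in $B$, so $p$ is decomposable, a contradiction. If $j\in B$, the first $|A|$ letters of $p$ take values among the $|A|+1$ smallest, so $p$ would be left-weakly decomposable unless $|A|=r-1$. In that remaining case $I=A\cup\{j\}$, and this is an occurrence in $w_1$ at positions $A\cup\{i+1\}$.
\end{itemize}

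The main obstacle is this case analysis in the converse, specifically pinning down that the range $1\le |A|<r-1$ is exactly what strong indecomposability forbids. Everything else is bookkeeping with the explicit formulas for $w_1$ and $w_2$.
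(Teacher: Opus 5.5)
Your proof is correct and follows the paper's route: reduce to the left-weakly decomposable case by inversion, transport occurrences in $w_1$ and $w_2$ into $w$ using the fixed points in \eqref{eqn:weak_facotorization}, and rule out occurrences straddling position $i$ by strong indecomposability of $p$. Your subcase $|A|=r-1$, $j\in B$ is genuinely needed: the paper deduces that $p$ is left-weakly decomposable from $\{p(1),\ldots,p(r')\}\subseteq[r'+1]$, but that deduction requires $r'<r-1$. Your case analysis sends this remaining occurrence to $w_1$ at positions $A\cup\{i+1\}$, which closes a step the written proof skips.
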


\begin{proof}
    First note that $p$ is strongly indecomposable if and only if $p^{-1}$ is strongly indecomposable.  Moreover, $w$ contains $p$ if and only if $w^{-1}$ contains $p^{-1}$.  Hence, without loss of generality, we may assume that $w$ is left-weakly decomposable.  
    
    Let $w=w_1\cdot w_2$ is a left-weakly decomposing factorization such that $S(w_1)\subseteq [i]$ and $S(w_2)\subseteq[n]\setminus[i]$.  Suppose $w$ avoids $p$.  For the sake of contradiction, if $w_1$ contains $p$, then any containment of $p$ must be within the sequence $w_1(1)\cdots w_1(i+1)$ since $w_1(k)=k$ for $k>i+1$ and $p$ is strongly indecomposable.  But $w$ contains the sequence $w_1(1)\cdots w_1(i+1)$ and hence would also contain $p,$ which is a contradiction.  Thus $w_1$ avoids $p$.  By a similar argument, $w_2$ also avoids $p$.  

    Conversely, assume that $w_1$ and $w_2$ each avoid $p$ and suppose that $w$ contains $p$.  If $p$ is contained within $w(1)\cdots w(i)$, then $w_1$ would contain $p$.  Similarly, if $p$ is contained within $w(i+1)\cdots w(n)$, then $w_2$ would contain $p$. Hence any containment of $p$ within $w$ must involve entries before and after $w(i+1)$ in one-line notation.
    
    Let $p=p(1)\cdots p(r)$ and fix a containment of $p$ within $w$.   Let $p(1)\cdots p(r')$ denote the sequence entries contained within $w(1)\cdots w(i)$.  Note that this sequence is non-empty and $r'<r$.     
    Since $\{w(1),\ldots, w(i)\}\subseteq [i+1]$, we must have $\{p(1),\ldots,p(r')\}\subseteq [r'+1]$.  This implies that $p$ is left-weakly decomposable, which is a contradiction.  Thus $w$ must avoid $p$.
\end{proof}

\section{Staircase diagrams}\label{sec:staircase_diagrams}
In this section, we define staircase diagrams and review several properties from \cite{RS17}.  For a subset $B\subseteq [n-1]=\{1,2,\ldots,n-1\}$, we use the terminology \textbf{connected} and \textbf{disconnected} to characterize whether or not $B$ is an interval of consecutive integers.

\begin{definition}\label{def:staircase}  Let $\mcD=\{B_1,\ldots,B_k\}$ be a set of non-empty intervals in $[n-1]$ where $B_i=[l_i,r_i]$ and
\[\text{$l_i<l_{i+1}$ and $r_i<r_{i+1}$  for all  $i$.}\]
We say a partial order $\preceq$ on $ \mcD$ is a \textbf{staircase diagram} on $[n-1]$ if the following conditions hold:
\begin{enumerate}
\item $(B_i,B_j)$ is a cover relation if and only if $j=i\pm 1$ and $B_i\cup B_j$ is connected.
\item If $B_{i-1}\succ B_i\prec B_{i-1}$ or $B_{i-1}\prec B_i\succ B_{i+1}$, then $B_{i-1}\cup B_{i+1}$ is disconnected.
\end{enumerate}
We also include $\mcD=\varnothing$ as the empty staircase diagram.
\end{definition}
We call the intervals $B_i$ \textbf{blocks} of $\mcD$ and the elements of a block, \textbf{boxes}. We can represent a staircase diagram with a picture of ``stairs" of varying sizes and with steps up/down corresponding to cover relations as seen in \Cref{Ex:typeA_11.1}. 
\begin{example}\label{Ex:typeA_11.1}
The picture  
\begin{equation*}
\begin{tikzpicture}[scale=0.45]
            \ppAff{12}{ {0,0,0,0,0,0,0,0,1,1,1},{4,0,0,2,2,0,0,3,3,3,0},{0,6,6,0,5,5,5,5,0,0,0}}
    \end{tikzpicture}
 \end{equation*} represents the staircase diagram $\mcD =\{[1,3] \prec [2,4] \prec [4,7]\succ  [7,8]     \prec [9,10]\succ \{11\}\}$.
\end{example}

 We assign a permutation to a staircase diagram as follows.  Recall that the symmetric group $\mfS_n$ is generated by $S=\{s_1,s_2,\ldots,s_{n-1}\}$ where $s_i=(i,i+1)$.   For any $J\subseteq [n-1]$, let $u_J$ denote the \textbf{longest permutation} generated by $\{s_i \ |\ i\in J\}$.  If $J=\{i\}$ is a singleton, then we write $u_i:=u_{\{i\}}=s_i$.  For $B\in \mcD$, define 
\[J(B):=B \cap \left(\bigcup_{B'\prec B} B'\right).\]
In other words, $J(B)$ is the set of boxes in $B$ that also belong to blocks below $B$ in the picture of $\mcD$. If $B$ is minimal in $\mcD$, then $J(B)=\varnothing$ and set $u_{\varnothing}=\text{id}$.
\begin{definition}\label{def:staircase_permutation}
Let $\mcD$ be a staircase diagram on $[n-1]$.  We define the permutation $\Lambda(\mcD)$ inductively on the number of blocks.  First, set $\Lambda(\varnothing):=\text{id}$.  Otherwise, choose a maximal block $B\in\mcD$ and define
$$\Lambda(\mcD):=u_B\cdot u_{J(B)}\cdot \Lambda(\mcD\setminus\{B\}).$$
\end{definition}

\begin{example}
Let $n=9$ and $\mcD=\{[1,3]\prec [2,4]\prec [4,7]\succ [7,8]\}$. 
Then \\
 
 \begin{align*}
 \Lambda\left(\begin{tikzpicture}[scale=0.45, baseline={(0,0.5)}]
            \ppAff{9}{ {0,0,0,0,0,1,1,1},{2,2,0,0,3,3,3,0},{0,5,5,5,5,0,0,0}}
    \end{tikzpicture}\right)    
    &=(u_{[4,7]}\cdot u_4u_7)\cdot \Lambda\left(\begin{tikzpicture}[scale=0.45, baseline={(0,0.5)}]
            \ppAff{9}{ {0,0,0,0,0,1,1,1},{2,2,0,0,3,3,3,0},{0,0,0,0,0,0,0,0}}
    \end{tikzpicture}\right)\\
    &=(u_{[4,7]}\cdot u_4u_7)\cdot (u_{[7,8]})\cdot \Lambda\left(\begin{tikzpicture}[scale=0.45, baseline={(0,0.25)}]
            \ppAff{5}{{0,1,1,1},{3,3,3,0},{0,0,0,0}}\end{tikzpicture}\, \right)  \\     
    &=(u_{[4,7]}\cdot u_4u_7)\cdot (u_{[7,8]})\cdot (u_{[2,4]}\cdot u_{[2,3]})\cdot \Lambda\left(\begin{tikzpicture}[scale=0.45, baseline={(0,0.25)}]
            \ppAff{4}{{1,1,1},{0,0,0},{0,0,0}}\end{tikzpicture}\, \right)  \\    
    &=(u_{[4,7]}\cdot u_4u_7)\cdot (u_{[7,8]})\cdot (u_{[2,4]}\cdot u_{[2,3]})\cdot u_{[1,3]}\\ &= 873126954.
\end{align*}
\end{example}
The next theorem is proved in \cite[Corollary 6.4]{RS17} and connects staircase diagrams with smooth permutations.
\begin{theorem}
\label{thm:smooth_bijection}
Let $\mcD$ be a staircase diagram on $[n-1]$.  Then the permutation $\Lambda(\mcD)$ is well-defined.
Furthermore, the map $\Lambda$ is a bijection:
$$\{\text{Staircase diagrams on $[n-1]$}\}\xrightarrow{\Lambda} \{w\in \mfS_n\ |\ w\text{ is smooth}\}.$$
\end{theorem}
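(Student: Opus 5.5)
The plan is to follow the structure of \cite{RS17}. There are three things to prove: $\Lambda(\mcD)$ is independent of the choice of maximal block, $\Lambda(\mcD)$ is smooth, and $\Lambda$ is a bijection onto smooth permutations.

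\textbf{Well-definedness.} I would argue by induction on the number of blocks. Suppose $B$ and $B'$ are two distinct maximal blocks of $\mcD$. Condition (1) of \Cref{def:staircase} says covers occur only between adjacent blocks with connected union. Condition (2) then forces $B\cup B'$ to be disconnected, and it also forces every box of $B$ that lies in a block below $B'$ to already lie in $J(B)$. Hence $u_B u_{J(B)}$ and $u_{B'}u_{J(B')}$ are supported on separated sets of generators, so they commute. Removing $B$ first and then $B'$ therefore gives the same product as the other order. Combined with the inductive hypothesis applied to $\mcD\setminus\{B\}$ and $\mcD\setminus\{B'\}$, this gives independence of the choice.

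\textbf{Smoothness.} The key claim is that the factorization $\Lambda(\mcD)=(u_Bu_{J(B)})\cdot\Lambda(\mcD\setminus\{B\})$ is a parabolic (Grassmannian) BP-decomposition with respect to $J(B)$. That is:
\begin{itemize}
\item $u_Bu_{J(B)}$ is the maximal-length minimal coset representative of $W_B/W_{J(B)}$;
\item $S(\Lambda(\mcD\setminus\{B\}))\cap B=J(B)$;
\item lengths add across the product.
\end{itemize}
Ryan's theorem \cite{Ry87}, in the form that such BP-decompositions yield fiber bundles with base a (smooth) partial flag variety of $W_B$ and fiber $X(\Lambda(\mcD\setminus\{B\}))$, then gives smoothness by induction. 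Alternatively, I could verify directly that this product avoids $3412$ and $4231$, and use \Cref{lem:decomposable} to reduce to connected support.

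\textbf{Bijectivity.} For surjectivity, take $w$ smooth with connected support. By Ryan/Gasharov there is a leaf $s$ of the Dynkin diagram (namely $s_1$ or $s_{n-1}$) such that $w$ or $w^{-1}$ has a Grassmannian BP-decomposition $w=vu$ with respect to $S(w)\setminus\{s\}$, where $v$ is maximal in $W^{J}$. Inductively we have $u=\Lambda(\mcD')$. From $v$ we read off a new block $B$: it is the connected component of the support of $v$ containing $s$, and $J(B)=B\cap S(u)$. We then add $B$ on top of $\mcD'$, or below it in the $w^{-1}$ case, and check conditions (1) and (2). For injectivity, I would recover $\mcD$ from $w$: the maximal blocks are determined as the maximal $B$ for which $w$ has a right (or left) BP-decomposition with leaf in $B$. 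Alternatively, I would count both sides and compare with \Cref{thm:smooth_enumeration}.

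\textbf{Main obstacle.} I expect the hardest part to be the surjectivity step: showing that the new block together with the existing diagram satisfies condition (2), and that the choice of side (left or right factor) matches the orientation of covers. The extension also has to be compatible with the inductive structure. Here I would rely on the Gasharov–Ryan leaf-removal lemma for $3412$/$4231$-avoiding permutations. The main effort then goes into translating "$v$ is the maximal element of $W_B^{J(B)}$" into the combinatorial staircase conditions.
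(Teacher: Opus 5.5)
The paper gives no proof of this theorem; it is quoted from \cite[Corollary 6.4]{RS17}, so your outline has to stand on its own. Your well-definedness step is fine. Two maximal blocks have disconnected union: consecutive connected pairs are covers, and a valley between them violates condition (2) of \Cref{def:staircase} unless the union is already disconnected. Hence $u_Bu_{J(B)}$ and $u_{B'}u_{J(B')}$ commute, and removing $B'$ does not change $J(B)$.

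The smoothness step is missing its key input. Your three bullets do not make $\Lambda(\mcD)=(u_Bu_{J(B)})\cdot\Lambda(\mcD\setminus\{B\})$ a BP decomposition, because additivity of length holds for \emph{every} parabolic decomposition $v\in W^J$, $u\in W_J$. The BP condition is $S(v)\cap J\subseteq D_L(u)$, which here reads $J(B)\subseteq D_L(\Lambda(\mcD\setminus\{B\}))$. Without it the projection is not a fibre bundle with fibre the Schubert variety of $u$. For a terminal block, this condition is exactly the statement that region $A_2$ of \Cref{fig:matrix_append_B_k} is decreasing. However, the paper proves \Cref{lem:regions_A_2_4_decreasing} \emph{from} the smoothness of $\Lambda(\mcD)$, so you cannot borrow it. The same fact is also what your alternative direct $3412/4231$ check would need (compare \Cref{fig:perm_contains_3412}). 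You need an independent inductive computation of the left descent set of $\Lambda(\mcD)$, which is the content of \cite[Theorem 3.8]{RS17}. This must include the case where a block below $B$ is not itself maximal in $\mcD\setminus\{B\}$.

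The surjectivity step fails as written. First, ``$v$ is maximal in $W^J$'' is false. For $w=2413=s_1s_3s_2$, none of the four choices (leaf $s_1$ or $s_3$, and $w$ or $w^{-1}$) gives the top element of $W^J$. What holds is that $v$ is maximal in $W_{S(v)}^{S(v)\setminus\{s\}}$; for example, at the leaf $s_{n-1}$ one gets $v=s_js_{j+1}\cdots s_{n-1}$.

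More seriously, your step only ever \emph{appends} the block $B=S(v)$. BP gives $S(v)\setminus\{s\}\subseteq D_L(u)\subseteq S(u)$, so every appended block has $J(B)=B\setminus\{s\}$. An induction from $\mcD=\varnothing$ then shows that the procedure can only produce diagrams whose blocks are all singletons, since a larger $B$ would contain an earlier singleton block. So you only reach products of distinct simple reflections. Already $w=321=(s_1s_2)\cdot s_1$ breaks it: $\mcD'=\{\{1\}\}$, $B=[1,2]$, and $\{\{1\}\prec[1,2]\}$ violates $l_1<l_2$.

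The missing case is block \emph{extension}. If the terminal block $B'$ of $\mcD'$ lies in $S(v)$, you must replace $B'$ by $S(v)$; this is the operation $\mcD\mapsto\mcD^+$ of \Cref{lem:containment_expand}. That operation is left multiplication by $s_m\cdots s_n$ if $B'$ is maximal, and right multiplication by $s_n\cdots s_m$ if $B'$ is minimal. Only in the remaining case should you append. You then have to use the BP descent condition for two things:
\begin{itemize}
\item in the extension case, to show that the orientation of $B'$ matches whether you decomposed $w$ or $w^{-1}$;
\item in the append case, to show that no forbidden valley of condition (2) arises.
\end{itemize}
Once this is repaired, your counting fallback for injectivity is legitimate. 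Staircase diagrams can be counted purely combinatorially, using the decomposition of \Cref{sec:GF_calc} with $Q=\varnothing$ and $Z^+$ the Catalan series, and the result compared with \Cref{thm:smooth_enumeration}.
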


For a staircase diagram $\mcD=\{B_1,\ldots, B_k\}$, define the \textbf{dual staircase diagram} $$\mcD^*:=\{B^*_1,\ldots, B^*_k\}$$ where $B^*_i=B_i$ set theoretically, and $B^*_i\prec B^*_j$ if and only if $B_i\succ B_j$ (i.e. the poset structure dual to $\mcD$).  It is clear from \Cref{def:staircase} that $\mcD^*$ is a well-defined staircase diagram.  For example, the dual to the staircase diagram in \Cref{Ex:typeA_11.1} has the picture:

$$\begin{tikzpicture}[scale=0.45]
            \ppAff{12}{{0,6,6,0,5,5,5,5,0,0,0},{4,0,0,2,2,0,0,3,3,3,0},{0,0,0,0,0,0,0,0,1,1,1}}
    \end{tikzpicture}$$
    
The next proposition, proved in \cite[Definition 3.5 and Theorem 3.8]{RS17}, connects several combinatorial properties of staircase diagrams with their associated permutations.  

\begin{proposition}\label{prop:staircase_permutation_properties}
Let $\mcD=\{B_1,\ldots, B_k\}$ be a staircase diagram on $[n-1]$ and let $\Lambda(\mcD)=w(1)\cdots w(n)$.  Then the following are true:
\begin{enumerate}
    \item The support $\displaystyle S(\Lambda(\mcD))= S(\mcD):=\bigcup_{i=1}^k B_i$.
    \item If $B_i\prec B_{i+1}$ and $B_i=[r_i,l_i]$, then $w(l_i+1)<w(l_i+2)$.
    \item If $B_i\succ B_{i+1}$ and $B_{i+1}=[r_{i+1},l_{i+1}]$, then $w(r_{i+1}-1)<w(r_{i+1})$.
    \item If $\mcD^*$ is the dual staircase diagram to $\mcD$, then $\Lambda(\mcD)^{-1}=\Lambda(\mcD^*)$.
\end{enumerate}    
\end{proposition}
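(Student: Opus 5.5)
The proposition is quoted from \cite{RS17}, but it can be checked directly from \Cref{def:staircase_permutation}. The plan is to induct on the number of blocks $k$, removing a maximal block $B$ each time. Throughout we use the factorization
$$\Lambda(\mcD)=u_B\cdot u_{J(B)}\cdot \Lambda(\mcD\setminus\{B\})$$
and the fact, from \Cref{thm:smooth_bijection}, that this does not depend on which maximal block is chosen.

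For (1), write $\mcD'=\mcD\setminus\{B\}$. By induction, $S(\Lambda(\mcD'))=S(\mcD')$. The element $u_{J(B)}$ is supported in $J(B)\subseteq B$, and $u_B$ has support exactly $B$. Bruhat support is subadditive over products. It is also additive for reduced products, where lengths add and the letters of every factor survive. So the key step is to check that the product is length-additive. Here $u_B u_{J(B)}$ is the maximal-length minimal coset representative of $u_B$ modulo $W_{J(B)}$. The letters of $\Lambda(\mcD')$ that lie in $B$ come only from blocks below $B$, so they lie in $J(B)$, and the whole product is reduced. Granting this, the support is $B\cup S(\mcD')=S(\mcD)$.

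For (4), we use that $\Lambda(\mcD)$ is independent of the order in which maximal blocks are removed. Expanding fully gives a word in which the factors appear in a linear extension of $\succeq$, each $u_{B_i}$ followed by $u_{J(B_i)}$. Taking inverses reverses the word, and each $u_J$ is an involution. The result is a product of $u_{B_i}$'s taken in a linear extension of the dual order, with the $u_{J(B_i)}$ now placed before $u_{B_i}$. The main obstacle is to regroup this as $\Lambda(\mcD^*)$, where each dual block $B_i^*$ should be paired with $J^*(B_i)$, the boxes shared with the blocks above $B_i$ in $\mcD$. The identity we would aim to prove is
$$u_{J(B)}\,u_B = u_B\,u_{J'(B)}, \qquad J'(B)=w_0^B J(B)\, w_0^B .$$
Here $w_0^B$ is the longest element of the parabolic subgroup on $B$, and $J'(B)$ is the reflection of $J(B)$ inside the interval $B$. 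One then has to verify that the shuffled factors $u_{J}$ match the dual $J^*$'s. This uses condition (2) of \Cref{def:staircase}, which forces the overlaps at the two ends of a block to be disjoint. We would try this first on a single chain $B_1\prec B_2$ and then handle peaks and valleys separately.

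For (2) and (3), (3) follows from (2) applied to $\mcD^*$ together with (4): the cover relation reverses, and a left ascent of $w^{-1}$ is the condition needed. To prove (2), suppose $B_i\prec B_{i+1}$. Remove maximal blocks until $B_{i+1}$ is maximal, and track positions $l_i+1$ and $l_i+2$. The position $l_i+1$ lies in $B_{i+1}$ but outside $B_i$, and the blocks to the left of $B_i$ do not reach it. The only factors that move these positions are $u_{B_{i+1}}u_{J(B_{i+1})}$ and the blocks further right. Right multiplication acts on positions, so we would compute the values there directly. We expect an ascent because $u_{J(B_{i+1})}$ already reverses the overlap $B_i\cap B_{i+1}$ before $u_{B_{i+1}}$ reverses everything. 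This is a routine but fiddly position computation.
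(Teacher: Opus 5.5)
The paper does not prove this proposition; it quotes it from \cite{RS17}. So your sketch has to stand on its own. Part (1) is fine: the product is reduced because all right descents of $u_Bu_{J(B)}$ lie in $B\setminus J(B)$, which misses $S(\mcD\setminus\{B\})$. Two of your other reductions fail, however. \emph{First, (3) does not follow from (2) applied to $\mcD^*$.} If $B_i\succ B_{i+1}$, then (2) for $\mcD^*$ gives $w^{-1}(l_i+1)<w^{-1}(l_i+2)$. That is a \emph{right} ascent of $w^{-1}$: it says the value $l_i+1$ occurs before the value $l_i+2$ in $w$. Statement (3), $w(r_{i+1}-1)<w(r_{i+1})$, is a \emph{left} ascent of $w^{-1}$ at a different index, about positions rather than values. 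For example, take $\mcD=\{[1,2]\succ[2,3]\}$, so $w=3421$. The dual statement concerns the values $3,4$, while (3) concerns the positions $1,2$. Inversion swaps positions and values, so it cannot produce (3). The symmetry you need is the flip $s_j\mapsto s_{n-j}$, i.e.\ $w\mapsto w_0ww_0$, applied to every block. It reverses the order of the blocks, preserves $\prec$, conjugates $\Lambda(\mcD)$ by $w_0$, and sends an ascent at position $p$ to one at position $n-p$. So it carries (2) exactly to (3).

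\emph{Second, in (4) your identity $u_{J(B)}u_B=u_Bu_{J'(B)}$ is true but pushes $u_{J(B)}$ the wrong way.} The matching you plan then fails in the simplest case. For $\mcD=\{[1,2]\prec[2,3]\}$, the word $\Lambda(\mcD)^{-1}=u_{[1,2]}u_2u_{[2,3]}$ is already literally $\Lambda(\mcD^*)$. Your identity rewrites it as $u_{[1,2]}u_{[2,3]}u_3$: now $u_{[1,2]}$ is no longer followed by $u_{J^*([1,2])}=u_2$, and the factor $u_3$ corresponds to nothing. The correct regrouping moves factors to the \emph{left} using only commutations. Write $u_{J(B)}$ as the commuting product of the $u_{B\cap B'}$ over the lower neighbours $B'$ of $B$. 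Each $B\cap B'$ is exactly the contribution of $B$ to $J^*(B')$.

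A clean way to organize this is to prove by induction the right-handed recursion $\Lambda(\mcD)=\Lambda(\mcD\setminus\{C\})\,u_{J^{\uparrow}(C)}\,u_C$ for a minimal block $C$, where $J^{\uparrow}(C)=C\cap\bigcup_{C'\succ C}C'$. Peel off a maximal block $B\neq C$. The only thing to check is that $u_{B\cap C}$ commutes with $\Lambda(\mcD\setminus\{B,C\})$ and with the $J^{\uparrow}$-factor of $C$ computed in $\mcD\setminus\{B\}$. This is condition (2) of \Cref{def:staircase} applied at the peak $B$ and at the valley $C$. Statement (4) then follows immediately by induction.

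Your sketch of (2) needs similar input. The definition only multiplies on the left, so its factors permute values, not positions. Factors from blocks to the right of $B_{i+1}$ could therefore a priori reorder $w(l_i+1)$ and $w(l_i+2)$. A workable route is to cut along the cover:
\[\Lambda(\mcD)=\Lambda(\{B_{i+1},\ldots,B_k\})\,u_{B_i\cap B_{i+1}}\,\Lambda(\{B_1,\ldots,B_i\}).\]
This is the overlapping analogue of \Cref{prop:staircase_wk_decomp}(1) and is proved by the same induction. Next, check that the last entry $z$ of $\Lambda(\{B_1,\ldots,B_i\})$ is smaller than the left endpoint $c$ of $B_{i+1}$. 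If $B_i$ is maximal in $\{B_1,\ldots,B_i\}$, then $z$ is the left endpoint of $B_i$, and left endpoints increase. Otherwise, use the valley condition at $B_i$. This gives $w(l_i+1)=z<c\le w(l_i+2)$.
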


Next we relate staircase diagrams to decomposing factorizations of permutations. 

\begin{proposition}\label{prop:staircase_wk_decomp}
    Let $k\geq 2$, $\mcD=\{B_1,\ldots, B_k\}$, and suppose $B_i\cap B_{i+1}=\varnothing$ for some $i<k$. Define the subdiagrams $$\mcD_1:=\{B_1,\ldots,B_i\}\quad\text{and}\quad \mcD_2:=\{B_{i+1},\ldots, B_k\}.$$  of $\mcD$. Then the following are true:
    \begin{enumerate}

        \item If $B_i\prec B_{i+1}$, then $\Lambda(\mcD)=\Lambda(\mcD_2)\cdot \Lambda(\mcD_1)$.
        \item If $B_i\succ B_{i+1}$, then $\Lambda(\mcD)=\Lambda(\mcD_1)\cdot \Lambda(\mcD_2)$.
        \item If $B_i\cup B_{i+1}$ is disconnected, then $\Lambda(\mcD)=\Lambda(\mcD_1)\cdot \Lambda(\mcD_2)=\Lambda(\mcD_2)\cdot \Lambda(\mcD_1)$.
    \end{enumerate}
Furthermore, the products in parts (1) and (2) are weakly decomposing factorizations and the product in part (3) is a decomposing factorization.
\end{proposition}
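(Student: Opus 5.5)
We compute $\Lambda(\mcD)$ straight from \Cref{def:staircase_permutation}, using the freedom (guaranteed by \Cref{thm:smooth_bijection}) to pick any maximal block at each step. Since $B_i\cap B_{i+1}=\varnothing$ and the blocks are ordered by left and right endpoints, every block of $\mcD_1$ lies in $[1,r_i]$ and every block of $\mcD_2$ lies in $[l_{i+1},n-1]$ with $r_i<l_{i+1}$. So the only possible interaction between $\mcD_1$ and $\mcD_2$ is the cover relation between $B_i$ and $B_{i+1}$, which exists exactly when $r_i+1=l_{i+1}$, i.e. when $B_i\cup B_{i+1}$ is connected. Consequently, for $B\in\mcD_1$, the set $J(B)$ computed in $\mcD$ equals $J(B)$ computed in $\mcD_1$, because boxes of $\mcD_2$ never meet $B$. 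The same holds for $B\in\mcD_2$. The factors $u_B\,u_{J(B)}$ therefore depend only on the subdiagram containing $B$.

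For part (1), suppose $B_i\prec B_{i+1}$. No block of $\mcD_2$ lies below any block of $\mcD_1$: any chain from $\mcD_2$ down into $\mcD_1$ would pass through the cover $B_{i+1}\to B_i$, which goes the wrong way. So we may repeatedly peel off maximal blocks of $\mcD_2$ first, since each is maximal in the remaining diagram. Once $\mcD_2$ is exhausted, the remaining diagram is exactly $\mcD_1$. Because the $J$-sets agree, this gives $\Lambda(\mcD)=\Lambda(\mcD_2)\cdot\Lambda(\mcD_1)$. An induction on the number of blocks of $\mcD_2$ makes this precise: at each stage the remaining diagram is $\mcD_2'\cup\mcD_1$ with the same relation. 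Part (2) is the symmetric argument, peeling $\mcD_1$ first; alternatively, apply (1) to $\mcD^*$ and use \Cref{prop:staircase_permutation_properties}(4) together with inversion. For part (3) there is no relation at all, so either order of peeling is allowed, giving both products.

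For the final claim, \Cref{prop:staircase_permutation_properties}(1) gives $S(\Lambda(\mcD_1))\subseteq[1,r_i]$ and $S(\Lambda(\mcD_2))\subseteq[l_{i+1},n-1]$. In case (3) we have $l_{i+1}\ge r_i+2$, so taking $i'=r_i+1$ in \Cref{lem:decomposable} yields a decomposing factorization. In cases (1) and (2), take $i'=r_i$: then $S(\Lambda(\mcD_1))\subseteq[i']$ and $S(\Lambda(\mcD_2))\subseteq[n-1]\setminus[i']$, with $1\le i'<n-1$ because $B_{i+1}\neq\varnothing$. The two factors appear in one of the orders allowed for a weakly decomposing factorization.

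The main obstacle is rigorously justifying the peeling order, namely that blocks of $\mcD_2$ remain maximal and that the $J$-sets are unchanged throughout the induction. Both follow from the disjointness of the supports and from the fact that the only cross cover relation is the one between $B_i$ and $B_{i+1}$.
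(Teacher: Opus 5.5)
Your proposal is correct and follows essentially the same route as the paper. Both arguments rest on three points: the freedom to choose the maximal block in \Cref{def:staircase_permutation}, the invariance of each $J(B)$ across the gap $r_i<l_{i+1}$, and the support bounds $S(\Lambda(\mcD_1))\subseteq[1,r_i]$, $S(\Lambda(\mcD_2))\subseteq[l_{i+1},n-1]$ for the factorization claim. The only difference is organizational: the paper inducts on $k$, peeling one maximal block at a time and switching to case (3) when $B_{i+1}$ itself is removed, whereas you peel all of $\mcD_2$ first using the induced order, which avoids that case split.
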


\begin{proof}
We prove the proposition by induction on $k=|\mcD|$. If $k=2$, then $\mcD=\{B_1,B_2\}$ with $B_1\cap B_{2}=\varnothing$. If $B_1\cup B_{2}$ is disconnected, then $B_1$ and $B_2$ are both maximal (and minimal) in $\mcD$.  Thus
$$\Lambda(\mcD)=u_{B_2}\cdot u_{J(B_2)}\cdot \Lambda(\{B_1\})=u_{B_1}\cdot u_{J(B_1)}\cdot \Lambda(\{B_2\}).$$
Since $B_1\cap B_{2}=\varnothing$, we have $u_{J(B_1)}=u_{J(B_2)}=\text{id}$ and hence
$$\Lambda(\mcD)=\Lambda(\{B_1\})\cdot \Lambda(\{B_2\})=\Lambda(\{B_2\})\cdot \Lambda(\{B_1\}).$$  
If $B_1\prec B_{2}$, then only $B_2$ is maximal in $\mcD$ and
$$\Lambda(\mcD)=u_{B_2}\cdot u_{J(B_2)}\cdot \Lambda(\{B_1\})=\Lambda(\{B_2\})\cdot \Lambda(\{B_1\}).$$
Similarly, if $B_1\succ B_{2}$, then only $B_1$ is maximal and $\Lambda(\mcD)=\Lambda(\{B_1\})\cdot \Lambda(\{B_2\})$.

Now suppose $\mcD=\{B_1,\ldots, B_k\}$ with $B_i\cap B_{i+1}=\varnothing$ for some $i<k$.  If $B_i\cup B_{i+1}$ is disconnected, then the blocks in $\mcD_1$ and $\mcD_2$ are pairwise incomparable in $\mcD$ and hence both sets contain a block that is maximal in $\mcD$.  Let $B_m$ be maximal in $\mcD$ such that $m\leq i$.  By induction, 
$$\Lambda(\mcD)=u_{B_m}\cdot u_{J(B_m)}\cdot \Lambda(\mcD\setminus\{B_m\})=u_{B_m}\cdot u_{J(B_m)}\cdot \Lambda(\mcD_1\setminus\{B_m\})\cdot\Lambda(\mcD_2).$$
Since $B_m\cap S(\mcD_2)=\varnothing$, the set $J(B_m)$ is unchanged when restricting from $\mcD$ to $\mcD_1$.  Thus  
$$u_{B_m}\cdot u_{J(B_m)}\cdot \Lambda(\mcD_1\setminus\{B_m\})=\Lambda(\mcD_1).$$
A similar argument shows that $\Lambda(\mcD)=\Lambda(\mcD_2)\cdot \Lambda(\mcD_1)$ by choosing a maximal block in $\mcD_2$.

If $B_i\prec B_{i+1}$, then there exists a maximal element $B_m\in \mcD$ such that $m>i$.  If $m>i+1$, then $B_i\prec B_{i+1}$ in $\mcD\setminus\{B_m\}$.
Induction on $k$ yields 
$$\Lambda(\mcD)=u_{B_m}\cdot u_{J(B_m)}\cdot \Lambda(\mcD\setminus\{B_m\})=u_{B_m}\cdot u_{J(B_m)}\cdot \Lambda(\mcD_2\setminus\{B_m\})\cdot\Lambda(\mcD_1)=\Lambda(\mcD_2)\cdot\Lambda(\mcD_1).$$
If $m=i+1$, then either $B_i\cup B_{i+2}$ is disconnected in $\mcD\setminus\{B_m\}$ or $\mcD_2=\{B_m\}$.  In either case, we still conclude $\Lambda(\mcD)=\Lambda(\mcD_2)\cdot\Lambda(\mcD_1).$  Note that if $B_i\succ B_{i+1}$, then an analogous argument shows that $\Lambda(\mcD)=\Lambda(\mcD_1)\cdot \Lambda(\mcD_2)$.

Finally, observe that if $B_i=[l_i,r_i]$ and $B_{i+1}=[l_{i+1},r_{i+1}]$, then $$S(\Lambda(\mcD_1))\subseteq [1,r_i]\quad \text{and}\quad S(\Lambda(\mcD_2))\subseteq [l_{i+1},n-1].$$  Since $B_i\cap B_{i+1}=\varnothing$, we have $r_i<l_{i+1}$.  \Cref{lem:wk_decomposable_support} implies the product  $\Lambda(\mcD_i)\cdot \Lambda(\mcD_{3-i})$ is a weakly decomposing factorization for $i=1,2$.  Furthermore, if $B_i\cup B_{i+1}$ is disconnected, then \Cref{lem:decomposable} implies the product is a decomposing factorization.  This completes the proof.
\end{proof}

\begin{example} The following is a weakly decomposing factorization with $[2,4]\prec [5,7]$. 

\begin{align*}
\Lambda\left(\begin{tikzpicture}[scale=0.45, baseline={(0,0.5)}]
            \ppAff{9}{ {0,0,0,0,0,1,1,1},{2,2,0,0,3,3,3,0},{0,5,5,5,0,0,0,0}}
    \end{tikzpicture}\right)&=\Lambda\left(\ \begin{tikzpicture}[scale=0.45, baseline={(0,0.5)}]
            \ppAff{9}{ {0,0,0,0,0,0,0,0},{2,2,0,0,0,0,0,0},{0,5,5,5,0,0,0,0}}
    \end{tikzpicture}\right)\cdot \Lambda\left(\begin{tikzpicture}[scale=0.45, baseline={(0,0.5)}]
            \ppAff{9}{ {0,0,0,0,0,1,1,1},{0,0,0,0,3,3,3,0},{0,0,0,0,0,0,0,0}}
    \end{tikzpicture}\right)\\
    &= (u_{[5,7]}\cdot u_{7}\cdot u_{[7,8]})\cdot (u_{[2,4]}\cdot u_{[2,3]}\cdot u_{[1,3]}).
\end{align*}

\end{example}

\Cref{lem:wk_decomposable_patterns} and \Cref{prop:staircase_wk_decomp} imply that, for pattern avoidance on smooth permutations, it suffices to study staircase diagrams $\mcD$ where $\Lambda(\mcD)$ is strongly indecomposable given that the avoiding patterns are also strongly indecomposable.  We end this section with a ``converse" of \Cref{prop:staircase_wk_decomp}.  

\begin{proposition}\label{prop:staircase_wk_decomp2}
Let $\mcD=\{B_1,\ldots, B_k\}$ be a staircase diagram on $[n-1]$ such that $\Lambda(\mcD)$ is weakly decomposable.  Then there exists $i<k$ such that $B_i\cap B_{i+1}=\varnothing$.  

Moreover, if $\Lambda(\mcD)$ is decomposable, then there exists $i<k$ such that $B_i\cup B_{i+1}$ is disconnected.
\end{proposition}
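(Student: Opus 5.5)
We prove the contrapositive of each part. For the first part, assume every consecutive pair of blocks overlaps, i.e.\ $B_i\cap B_{i+1}\neq\varnothing$ for all $i<k$; then $B_i\cup B_{i+1}$ is connected, and by condition (1) of \Cref{def:staircase} each pair $B_i,B_{i+1}$ is a cover relation. We will show that $w=\Lambda(\mcD)$ is strongly indecomposable. Write $B_j=[l_j,r_j]$. Since consecutive blocks overlap, the support $S(\mcD)=[l_1,r_k]$ is an interval.

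First we reduce to the case $S(\mcD)=[n-1]$. If $l_1>1$ or $r_k<n-1$, then $w$ fixes the points outside $[l_1,r_k+1]$, so $w$ is decomposable, hence weakly decomposable, and the argument fails. The statement is therefore only true as intended once we localize. We interpret the claim as follows: weak decomposability that arises from a split point $i$ of the support, namely an $i$ with $l_1\le i<r_k$ (equivalently, one not coming from trivial fixed ends), forces $B_j\cap B_{j+1}=\varnothing$. If the paper's convention really is literal, the trivial fixed-end case must be excluded, or else $\mcD=\{B_1\}$ with $B_1\ne[n-1]$ gives a counterexample. With this interpretation, we suppose $w$ is left-weakly decomposable at some $i$; the case of $w^{-1}$ follows from \Cref{prop:staircase_permutation_properties}(4) applied to $\mcD^*$, which has the same blocks. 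By \Cref{lem:wk_decomposable_support} we get $w=w_1w_2$ with $S(w_1)\subseteq[i]$ and $S(w_2)\subseteq[n-1]\setminus[i]$.

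The key step is to show that this forces a block boundary at $i$. Let $B_j$ be the block containing $i$ with $j$ maximal, so $i\in B_j$. If also $i+1\in B_j$, or $i+1\in B_{j+1}$ with $B_j\cap B_{j+1}\ne\varnothing$, then the parabolic subgroup generated by $s_i,s_{i+1}$ would contribute $s_is_{i+1}$ or $s_{i+1}s_i$ to a reduced word of $w$, depending on the cover direction. We expect the main obstacle to be making this precise. The plan is to use the explicit form $u_B\,u_{J(B)}$ from \Cref{def:staircase_permutation}: choose $B_j$ or a block containing $\{i,i+1\}$ appropriately via a linear extension, and show that the factor $u_{[l,r]}u_{J}$ with $\{i,i+1\}\subseteq[l,r]$ and $i\notin J$ or $i+1\notin J$ produces an entry $w(m)>i+1$ for some $m\le i$. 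Concretely, we compute the one-line image of the longest element restricted to a block, and track how the lower blocks (which only permute within their own intervals) cannot repair this. Alternatively, we can argue via uniqueness of the parabolic factorization (\cite[Proposition 2.4.4]{BB05}). Both $w=w_1w_2$ and the factorization of \Cref{prop:staircase_wk_decomp} are parabolic decompositions. We then compare the support of the maximal parabolic component of $w$ in $W_{[i+1,n-1]}$ with the blocks, and show that it forces $r_j=i$ and $l_{j+1}=i+1$, hence $B_j\cap B_{j+1}=\varnothing$.

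For the second part, suppose $w$ is decomposable at $i$. By \Cref{lem:decomposable}, $s_i\notin S(w)=S(\mcD)$ by \Cref{prop:staircase_permutation_properties}(1). Because the blocks satisfy $l_j<l_{j+1}$ and $r_j<r_{j+1}$, the point $i$ lies strictly between two consecutive blocks, $r_j<i<l_{j+1}$ for some $j$, unless $i$ lies outside $[l_1,r_k]$ (the trivial-end case discussed above). Then $B_j\cup B_{j+1}$ misses $i$ and is therefore disconnected.
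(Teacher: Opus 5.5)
Your caveat is correct. Read literally, the statement fails: $\mcD=\{[1,1]\}$ on $[2]$ gives the decomposable permutation $213$ with $k=1$. The paper's proof tacitly assumes both factors are nontrivial, and the paper applies the result to $\Lambda(\mcD)$ viewed in $\mfS_{b-a+2}$, which amounts to your restriction to split points inside $[l_1,r_k]$. Your proof of the decomposable part is also correct, and more direct than the paper's, which reruns the gluing argument: $i\notin S(\mcD)$ together with $l_1\le i\le r_k$ forces $r_j<i<l_{j+1}$ for some $j$.

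The first part, however, is not proved. You say yourself that the key step still has to be made precise, and neither route does so.

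\emph{Your first route} tracks entries through $u_Bu_{J(B)}$. A block $B\supseteq\{i,i+1\}$ need not be maximal, so $\Lambda(\mcD)$ need not factor as $u_Bu_{J(B)}\cdot\Lambda(\mcD\setminus\{B\})$. Even when it does, left multiplication by $u_Bu_{J(B)}$ acts on values after $\Lambda(\mcD\setminus\{B\})$, a product over many overlapping blocks, has already rearranged positions. So ``lower blocks only permute within their own intervals'' gives no control over $w(1),\ldots,w(i)$. Your remark about $s_{i+1}s_i$ could be made to work: the permutations left-weakly decomposable at $i$ form a lower order ideal in Bruhat order by the subword property, and $s_{i+1}s_i$ is not in it. But that needs $u_B\le\Lambda(\mcD)$ for every block $B$, which you do not address. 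Your second case is also vacuous: with $j$ maximal, $r_j=i$ and $B_j\cap B_{j+1}\neq\varnothing$ would force $i\in B_{j+1}$.

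\emph{Your alternative} via uniqueness of parabolic factorizations is circular. Under your standing assumption that consecutive blocks meet, \Cref{prop:staircase_wk_decomp} supplies no factorization of $\Lambda(\mcD)$ to compare with $w_1w_2$. And ``$r_j=i$, $l_{j+1}=i+1$'' is precisely what has to be shown.

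The missing idea is to build a second diagram from the factorization and use injectivity of $\Lambda$.
\begin{enumerate}
\item Since $3412$ and $4231$ are strongly indecomposable, \Cref{lem:wk_decomposable_patterns} makes $w_1$ and $w_2$ smooth.
\item So $w_1=\Lambda(\mcD_1)$ and $w_2=\Lambda(\mcD_2)$ with $S(\mcD_1)\subseteq[i]$ and $S(\mcD_2)\subseteq[i+1,n-1]$. Both are nonempty when $l_1\le i<r_k$.
\item Concatenate them into $\mcD'$, declaring the last block of $\mcD_1$ to lie above the first block of $\mcD_2$ when their union is connected. One checks that $\mcD'$ is a staircase diagram whose two junction blocks are disjoint.
\item \Cref{prop:staircase_wk_decomp} then gives $\Lambda(\mcD')=w_1w_2=\Lambda(\mcD)$, and \Cref{thm:smooth_bijection} forces $\mcD=\mcD'$.
\end{enumerate}
This is the paper's argument; the right-weak case follows by duality, as you say.
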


\begin{proof}
Suppose that $\Lambda(\mcD)$ is left-weakly decomposable with a decomposing factorization $\Lambda(\mcD)=w_1\cdot w_2$.  Since the smooth patterns 3412 and 4231 are both strongly indecompsoable, it follows from \Cref{lem:wk_decomposable_patterns} that $w_1$ and $w_2$ are both smooth permutations since $\Lambda(\mcD)$ is smooth. \Cref{thm:smooth_bijection} implies there exist staircase diagrams $\mcD_1$ on $[j-1]$ and $\mcD_2$ on $[j,n-1],$ where $w_1=\Lambda(\mcD_1)$ and $w_2=\Lambda(\mcD_2)$.  Let 
\[\mcD_1=\{B'_1,\ldots, B'_i\}\quad\text{and}\quad\mcD_2=\{B'_{i+1},\ldots, B'_{\ell}\}\]
and define 
\[\mcD'=\{B'_1,\ldots,B'_i,B'_{i+1},\ldots,B'_{\ell}\}\] with the staircase poset structures of $\mcD_1 $ and $\mcD_2$, and with the additional relation $B'_i\succ B'_{i+1}$ if $B'_i\cup B'_{i+1}$ is connected.  Since $B'_i\cap B'_{i+1}=\varnothing$, we have
\[\Lambda(\mcD)=w_1\cdot w_2=\Lambda(\mcD_1)\cdot\Lambda(\mcD_2)=\Lambda(\mcD').\]
But \Cref{thm:smooth_bijection} implies that $\Lambda(\mcD)$ uniquely determines $\mcD$ and hence $\mcD=\mcD'$ and $B_i\cap B_{i+1}=\varnothing$.  If $\Lambda(\mcD)$ is right-weakly decomposable, a similar argument applies with the same conclusion.  If $\Lambda(\mcD)$ is decomposable, again the argument gives the same conclusion, but with $B_i\cup B_{i+1}$ disconnected.
\end{proof}


\section{Staircase diagrams and pattern avoidance}\label{sec:sd_patterns}

In this section, we study staircase diagrams in relation to pattern avoidance.  We will often view $\Lambda(\mcD)$ as a permutation pattern.  In particular, if a staircase diagram has support $S(\mcD)=[a,b]$, we can translate the support to $[1,b-a+1]$ and hence view $\Lambda(\mcD)$ as a permutation in $\mfS_{b-a+2}$.  We use the convention that if $\Lambda(\mcD)$ is to be avoided or contained, then we view $\Lambda(\mcD)$ as a permutation pattern in $\mfS_{b-a+2}$.

The next two lemmas establish a containment principle on staircase diagrams and their corresponding permutations. 


\begin{lemma}\label{lem:containment_expand}
    Let $\mcD=\{B_1,\ldots, B_k\}$ be a staircase diagram on $[n-1]$ with $B_k=[m,n-1]$.
    With the induced poset structure from $\mcD$, define $$\mcD^+:=\{B_1,\ldots, B_{k-1},B_k'\}$$ where $B_k'=[m,n]$.  Then $\Lambda(\mcD^+)$ contains $\Lambda(\mcD)$.
\end{lemma}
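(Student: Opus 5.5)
The plan is to realize $\Lambda(\mcD)$ inside $\Lambda(\mcD^+)$ by deleting a single point of the permutation matrix, and to read off which point from the factorization in \Cref{def:staircase_permutation}.

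\textbf{Step 1: only one factor changes.} Fix a linear extension of $\mcD$, which is also a linear extension of $\mcD^+$ since the posets agree, and compute both $\Lambda(\mcD)$ and $\Lambda(\mcD^+)$ by peeling maximal blocks in that order. The only block of either diagram meeting $\{n-1,n\}$ is the last one, since $r_i<r_k=n-1$ for $i<k$. So $J(B_k')=J(B_k)$. For $i<k$, $J(B_i)$ is also unchanged, because $B_i\cap B_k'=B_i\cap B_k$. Hence
\[\Lambda(\mcD)=x\,u_{B_k}\,u_{J(B_k)}\,y \quad\text{and}\quad \Lambda(\mcD^+)=x\,u_{[m,n]}\,u_{J(B_k)}\,y,\]
where $x,y$ are products of generators $s_1,\ldots,s_{n-1}$ and are the same in both expressions.

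\textbf{Step 2: factor the longest element.} Use the parabolic factorization $u_{[m,n]}=c\,u_{[m,n-1]}$, or alternatively $u_{[m,n-1]}\,c'$, where $c$ (resp. $c'$) is the cycle $s_m s_{m+1}\cdots s_n$ or its reverse. Which side to use should be decided by which deletion operation commutes with the remaining factors:
\begin{itemize}
\item Deleting the value $n+1$ from a one-line word and standardizing commutes with left multiplication by any $s_i$, $i<n$, since these act on values in $[n]$.
\item Deleting the position $n+1$ commutes with right multiplication by $s_i$, $i<n$.
\end{itemize}
Since $x$ acts on the left and $y$ on the right, a single deletion rule must be shown to commute with both. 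I would first try the operator ``delete the point $(j,\Lambda(\mcD^+)(j))$ and standardize'' for a point $j$ determined by the cycle $c$. For the cycle alone, this point is clean: the cycle $c$ displaces one entry into the new row/column $n+1$, and removing it should give $u_{B_k}$. Sanity checks support some such rule. For example, $\mcD=\{[1,1]\}$ gives $21\mapsto 321$, and deleting $3$ recovers $21$.

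\textbf{Step 3: push the deletion through $x$ and $y$.} Using the fact that $x$ and $y$ only involve $s_1,\ldots,s_{n-1}$, I would show that removing the point contributed by the extra letter $s_n$ in $\Lambda(\mcD^+)$, along with its row and column, commutes with multiplication by $x$ on the left and $y$ on the right. This would identify the resulting pattern with $x\,u_{B_k}\,u_{J(B_k)}\,y=\Lambda(\mcD)$.

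This commutation is the main obstacle: a generic deleted point is moved by both $x$ and $y$. To control it, I would exploit that $n$ lies only in $B_k'$, so the letter $s_n$ occurs exactly once in a reduced word for $\Lambda(\mcD^+)$. Removing a single occurrence of $s_n$ from such a word, with $s_n$ appearing once, should correspond to deleting one matrix point. If this direct tracking becomes unwieldy, the fallback is induction on $k$. Apply \Cref{prop:staircase_wk_decomp} at an index with $B_i\cap B_{i+1}=\varnothing$, or else remove a maximal or minimal block other than $B_k$ and use the recursive definition, reducing to the case $k=1$, where the claim is that $u_{[1,n]}=(n+1)n\cdots 1$ contains $n\cdots 1$.
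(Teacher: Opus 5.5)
Your Step 1 bookkeeping is correct, but the argument stops exactly where the work is. Step 3 is labelled ``the main obstacle'' and never resolved, and the tools you offer for it do not work.

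\emph{Removing one letter.} Deleting the single letter $s_n$ from a reduced word of $\Lambda(\mcD^+)$ cannot give $\Lambda(\mcD)$. It lowers the length by one, whereas the two permutations differ in length by $n-m+1$ (the number of inversions through the point deleted below). What has to disappear is the whole cycle $c=u_{[m,n]}u_{[m,n-1]}=s_ms_{m+1}\cdots s_n$.

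\emph{Arbitrary peeling order.} With an arbitrary linear extension you only have $\Lambda(\mcD^+)=(xcx^{-1})\,\Lambda(\mcD)$. The proposal never identifies a point whose deletion is compatible with both $x$ and $y$. As you note yourself, neither ``delete value $n+1$'' nor ``delete position $n+1$'' commutes with both.

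\emph{Fallback induction.} This fails as stated, because containment is not preserved when both permutations are multiplied by $u_{B_j}u_{J(B_j)}$. Knowing that $\Lambda(\mcD^+\setminus\{B_j\})$ contains $\Lambda(\mcD\setminus\{B_j\})$ tells you nothing about $\Lambda(\mcD^+)$ versus $\Lambda(\mcD)$ unless an explicit identity is carried through the induction.

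The missing idea is to use your freedom in the peeling order (well-definedness of $\Lambda$) together with the fact that $B_k$ is extremal. Cover relations only join consecutive blocks, so the only possible cover involving $B_k$ is with $B_{k-1}$. Hence $B_k$ is maximal or minimal in $\mcD$, and equally in $\mcD^+$.

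If $B_k$ is maximal, peel it first, so $x$ is trivial. Using $J(B_k')=J(B_k)$,
\[\Lambda(\mcD^+)=u_{[m,n]}u_{J(B_k)}\Lambda(\mcD\setminus\{B_k\})=(u_{[m,n]}u_{[m,n-1]})\,\Lambda(\mcD)=(s_m\cdots s_n)\,\Lambda(\mcD).\]
This cycle sends $v\mapsto v+1$ for $m\le v\le n$ and $n+1\mapsto m$, and $\Lambda(\mcD)$ fixes $n+1$. So deleting the last entry of $\Lambda(\mcD^+)$ (the value $m$) leaves a word order-isomorphic to $\Lambda(\mcD)$.

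If $B_k$ is minimal, peel it last, or dualize via $\Lambda(\mcD^*)=\Lambda(\mcD)^{-1}$. Then $J(B_k)=\varnothing$ and $y$ is trivial, so $\Lambda(\mcD^+)=\Lambda(\mcD)\,(s_n\cdots s_m)$. This inserts the value $n+1$ at position $m$; delete it.

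With the cycle on the left, you must delete the last position, not the value $n+1$. For example, $\Lambda(\{[1,2]\prec[2,3]\})=4312$ and $\Lambda(\{[1,2]\prec[2,4]\})=54132$. Removing $5$ leaves $4132$, while removing the last entry leaves $5413\sim 4312$. Your test case $21\mapsto 321$ cannot distinguish the two rules.
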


 \begin{proof}
Since $B_k$ is the terminal block of $\mcD$, it is either maximal or minimal. Suppose that $B_k$ is maximal.  Since $\mcD\setminus\{B_k\}=\mcD^+\setminus\{B'_k\}$ and $J(B_k)=J(B'_k)$, we have 
 $$\Lambda(\mcD^+)=u_{B'_k}\cdot u_{J(B'_k)}\cdot \Lambda(\mcD^+\setminus\{B_k\})=u_{B'_k}\cdot u_{(B_k)}\cdot \Lambda(\mcD).$$
Observe that $u_{B'_k}\cdot u_{B_k}=s_m\cdots s_n$ is a cyclic permutation whose left action appends $n+1$ to $\Lambda(\mcD)$ and then shifts the values $(m,m+1,\ldots, n,n+1)$.  Specifically, if $\Lambda(\mcD)=w(1)\cdots w(n)$ in one-line notation, then $\Lambda(\mcD^+)=w'(1)\cdots w'(n+1)$ where 
$$w'(i)=\begin{cases}
          w(i) &\text{if  $w(i)<m$}\\
          w(i)+1 &\text{if  $m\leq w(i)\leq n$}\\
          m &\text{if  $i=n+1$.}
      \end{cases}$$
Hence $\Lambda(\mcD^+)$ contains $\Lambda(\mcD)$.
If $B_k$ is minimal in $\mcD$, then it is maximal in the dual staircase diagram $\mcD^*$.   \Cref{prop:staircase_permutation_properties} part (4) implies that
 $$\Lambda(\mcD^+)^{-1}=s_m\cdots s_n\cdot \Lambda(\mcD)^{-1}$$ and hence
  $$\Lambda(\mcD^+)=\Lambda(\mcD)\cdot s_n\cdots s_m.$$
  The right action of the cyclic shift $s_n\cdots s_m$ ``inserts" $n+1$ into the $m$-th position of $\Lambda(\mcD)$.  In other words,
  $$\Lambda(\mcD^+)=w(1)\cdots w(m-1)\ (n+1)\ w(m+1)\cdots  w(n)$$
  where $\Lambda(\mcD):=w(1)\cdots w(n)$.  Again we have $\Lambda(\mcD^+)$ contains $\Lambda(\mcD)$.
  \end{proof}

\begin{lemma}\label{lem:containment_delete}
    Let $\mcD=\{B_1,\ldots, B_k\}$ be a staircase diagram on $[n]$ and consider the subdiagram $\mcD'=\mcD\setminus\{B_k\}$.  Then $\Lambda(\mcD)$ contains $\Lambda(\mcD')$. 
\end{lemma}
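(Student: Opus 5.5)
We argue by induction on the size of the last block $B_k=[m,n]$, reducing to \Cref{lem:containment_expand} and \Cref{prop:staircase_wk_decomp}. Here $\mcD'$ is viewed as a staircase diagram on its support $S(\mcD')\subseteq[1,r_{k-1}]$, and containment means containment of the pattern obtained by translating the support, as in the convention of this section.

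\textbf{Base case.} Suppose $B_{k-1}\cap B_k=\varnothing$. Then \Cref{prop:staircase_wk_decomp} applies with $i=k-1$, $\mcD_1=\mcD'$ and $\mcD_2=\{B_k\}$. It gives $\Lambda(\mcD)=\Lambda(\mcD')\cdot u_{B_k}$ or $u_{B_k}\cdot\Lambda(\mcD')$, and this is a weakly decomposing factorization. From the proof of \Cref{lem:wk_decomposable_support}, the factor $\Lambda(\mcD')$ is recovered from the one-line notation of $\Lambda(\mcD)$. In the left-weak case, $w_1$ agrees with $w$ on positions $1,\dots,i$ and its entry at $i+1$ is $x$, the minimum of the later entries. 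So $w_1$ restricted to $[1,i+1]$ is the subsequence $w(1)\cdots w(i)\,w(j)$, which is a pattern occurring in $w$. The inverse case is symmetric, via \Cref{prop:staircase_permutation_properties}(4). Hence $\Lambda(\mcD)$ contains $\Lambda(\mcD')$.

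\textbf{Inductive step.} Suppose $B_{k-1}\cap B_k\neq\varnothing$, so $m\le r_{k-1}$. Two cases arise.

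\emph{Case $r_{k-1}<n$.} Shrink $B_k$ to $\widetilde B_k=[m,n-1]$. This is still a valid block since $r_{k-1}<n-1+1$ and $m\le n-1$; we need $m\le n-1$, i.e. $B_k$ is not a singleton, and if $B_k$ is a singleton then $B_{k-1}\cap B_k=\varnothing$, which is the base case. Let $\widetilde\mcD$ be the diagram obtained by replacing $B_k$ with $\widetilde B_k$, keeping the same order. We must check that $\widetilde\mcD$ is a staircase diagram on $[n-1]$. The cover relation with $B_{k-1}$ is preserved because $B_{k-1}\cup\widetilde B_k$ is still connected. Condition (2) only becomes easier, since shrinking preserves disconnectedness of $B_{k-2}\cup\widetilde B_k$. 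Then $\mcD=\widetilde\mcD^+$, so \Cref{lem:containment_expand} shows that $\Lambda(\mcD)$ contains $\Lambda(\widetilde\mcD)$. By induction on $|B_k|$, $\Lambda(\widetilde\mcD)$ contains $\Lambda(\mcD')$. The induction ends either when the blocks become disjoint or in the other case.

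\emph{Case $r_{k-1}\ge n$.} This is impossible, since $r_{k-1}<r_k=n$. The only real constraint is that shrinking must keep $l_{k-1}<m$ and $r_{k-1}<r_k$. Repeated shrinking reduces $r_k$ down to $r_{k-1}+1$, where the blocks still overlap if $m\le r_{k-1}$.

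\textbf{Main obstacle: overlapping blocks with $r_k=r_{k-1}+1$.} Here we cannot shrink from the right any further. My plan is to shrink from the left instead, moving $m$ up toward $r_{k-1}+1$. The left-end analogue of \Cref{lem:containment_expand} should follow by applying the symmetry $w\mapsto w_0ww_0$, which reverses $[n]$ and maps staircase diagrams to their mirror images. Alternatively, one could directly compute how $u_{B}\cdot u_{J(B)}$ changes when $m$ decreases. Verifying this reflection symmetry, and checking that staircase conditions (1) and (2) survive left-shrinking, is where I expect the most care to be needed. In particular, left-shrinking must not create a new disjointness with $B_{k-2}$ that breaks the cover relation $B_{k-1}$--$B_k$; it does not, since $B_{k-1}\cup B_k$ stays connected. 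Once $m=r_{k-1}+1$, the blocks are disjoint and the base case finishes the proof.
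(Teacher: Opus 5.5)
\textbf{There is a genuine gap in the case you flag as the main obstacle.} Your right-shrinking step is the same reduction the paper makes: by \Cref{lem:containment_expand} one may assume $r_k=r_{k-1}+1$. (The condition you need for $[m,n-1]$ to remain a valid last block is $r_{k-1}<n-1$, not $r_{k-1}<n$.) Your base case via \Cref{prop:staircase_wk_decomp} is also fine. The remaining case, $B_k=[m,n]$ overlapping $B_{k-1}$ with $r_{k-1}=n-1$, is the heart of the lemma, and shrinking $B_k$ from the left cannot work there. Since $m\le r_{k-1}$, the box $m$ already lies in $B_{k-1}$. So $\widetilde{\mcD}$, with $B_k$ replaced by $[m+1,n]$, has the same support as $\mcD$, and $\Lambda(\mcD)$, $\Lambda(\widetilde{\mcD})$ are permutations of the same size. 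Containment would then force $\Lambda(\mcD)=\Lambda(\widetilde{\mcD})$, contradicting injectivity of $\Lambda$ (\Cref{thm:smooth_bijection}). For example, $\Lambda(\{[1,2]\prec[2,3]\})=4312$ does not contain $\Lambda(\{[1,2]\prec\{3\}\})=4213$. The symmetry $w\mapsto w_0ww_0$ does not rescue this. It sends the left (inner) endpoint of the last block to the right (inner) endpoint of the first block, whereas the ``$B_1$'' analogue of \Cref{lem:containment_expand} only moves the outer endpoint and always enlarges the support.

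\textbf{What is missing is a direct computation in this case, which is what the paper does.} Suppose $B_k=[m,n]$ is maximal and $r_{k-1}=n-1$. Then $J(B_k)=B_k\cap B_{k-1}=[m,n-1]$, so $\Lambda(\mcD)=u_{[m,n]}\,u_{[m,n-1]}\,\Lambda(\mcD')=(s_ms_{m+1}\cdots s_n)\cdot\Lambda(\mcD')$. Left multiplication by this cycle fixes values below $m$, sends each value $j\in[m,n]$ to $j+1$, and places $m$ in position $n+1$. Hence the first $n$ entries of $\Lambda(\mcD)$ are order-isomorphic to $\Lambda(\mcD')$. If $B_k$ is minimal, apply the same argument to $\mcD^*$ using \Cref{prop:staircase_permutation_properties}(4). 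This computation handles the overlapping and non-overlapping cases uniformly. After the right-shrinking reduction, neither your induction on $|B_k|$ nor the appeal to \Cref{prop:staircase_wk_decomp} is needed.
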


\begin{proof}
By \Cref{lem:containment_expand}, we can assume, without loss of generality, that $S(\mcD)=[n]$ and $S(\mcD')=[n-1]$.  If $B_k$ is maximal in $\mcD$, then 
$$\Lambda(\mcD)=u_{B_k}\cdot u_{J(B_k)}\cdot \Lambda(\mcD').$$

If $B_k=[m,n]$, then $J(B_k)=[m,n-1]$ and $u_{B_k}\cdot u_{J(B_k)}=s_m\cdots s_n$ is a cyclic permutation whose left action on $\Lambda(\mcD')$ appends $n+1$ and then shifts the values $(m,m+1,\dots, n+1)$.  We conclude that $\Lambda(\mcD)$ contains $\Lambda(\mcD')$.

If  $B_k$ is minimal in $\mcD$, then we can again conclude that $\Lambda(\mcD)$ contains $\Lambda(\mcD')$ by considering the dual staircase diagram $\mcD^*$ as in the proof of Lemma \ref{lem:containment_expand}.
\end{proof}

\begin{example}\label{ex:staircase_containment}
The following are examples of $\mcD'$, $\mcD$ and $\mcD^+$ respectively as in \Cref{lem:containment_expand} and \Cref{lem:containment_delete}.    
    $$
    \begin{tikzpicture}[scale=0.44]
        \ppAff{8}{{0,3,3,3,3},{0,0,0, 0,1,1,1}}
    \end{tikzpicture}    
    \qquad 
    \begin{tikzpicture}[scale=0.44]
            \ppAff{8}{{0,3,3,3,3},{4, 4,0, 0,1,1,1}}
    \end{tikzpicture}    
    \qquad     
    \begin{tikzpicture}[scale=0.44]
            \ppAff{9}{{0,0,3,3,3,3},{4,4, 4,0, 0,1,1,1}}
    \end{tikzpicture}
    $$
We have that 
\[\Lambda(\mcD')=4376521,\quad \Lambda(\mcD)=43875216,\quad \Lambda(\mcD^+) = 439852176.\]
$$\begin{tikzpicture}[scale=0.3]
\draw[step=1.0,black] (0,0) grid (7,7);
\fill (0.5,3.5) circle (7pt);
\fill (1.5,4.5) circle (7pt);
\fill (2.5,0.5) circle (7pt);
\fill (3.5,1.5) circle (7pt);
\fill (4.5,2.5) circle (7pt);
\fill (5.5,5.5) circle (7pt);
\fill (6.5,6.5) circle (7pt);
\end{tikzpicture}\qquad \quad
\begin{tikzpicture}[scale=0.3,baseline={(0,0.1)}]
\draw[fill=blue!15] (0,2) rectangle (7,3);
\draw[fill=blue!15] (7,0) rectangle (8,8);
\draw[step=1.0,black] (0,0) grid (8,8);
\fill (0.5,4.5) circle (7pt);
\fill (1.5,5.5) circle (7pt);
\fill (2.5,0.5) circle (7pt);
\fill (3.5,1.5) circle (7pt);
\fill (4.5,3.5) circle (7pt);
\fill (5.5,6.5) circle (7pt);
\fill (6.5,7.5) circle (7pt);
\fill (7.5,2.5) circle (7pt);
\end{tikzpicture}\qquad \quad
\begin{tikzpicture}[scale=0.3,baseline={(0,0.2)}]
\draw[fill=blue!15] (0,2) rectangle (7,3);
\draw[fill=blue!15] (7,4) rectangle (8,9);
\draw[fill=blue!15] (7,0) rectangle (8,3);
\draw[fill=blue!30] (8,0) rectangle (9,9);
\draw[fill=blue!30] (0,3) rectangle (8,4);
\draw[step=1.0,black] (0,0) grid (9,9);
\fill (0.5,5.5) circle (7pt);
\fill (1.5,6.5) circle (7pt);
\fill (2.5,0.5) circle (7pt);
\fill (3.5,1.5) circle (7pt);
\fill (4.5,4.5) circle (7pt);
\fill (5.5,7.5) circle (7pt);
\fill (6.5,8.5) circle (7pt);
\fill (7.5,2.5) circle (7pt);
\fill (8.5,3.5) circle (7pt);
\end{tikzpicture}
$$
\end{example}



We see in \Cref{ex:staircase_containment} that $\Lambda(\mcD')$ is contained in $\Lambda(\mcD),$ which is contained in $\Lambda(\mcD^+)$.  The nature of the containment from \Cref{lem:containment_expand} and \Cref{lem:containment_delete} depends on whether $B_k$ is a maximal or minimal block in $\mcD$.  In \Cref{ex:staircase_containment}, we showcase when $B_k$ is maximal in $\mcD$.  Note that the containment manifests itself as ``appending" on the right side of the matrix and bumping large numbers accordingly.  When $B_k$ is a minimal block, we append to the bottom of the matrix and shift to the right (i.e. the transpose of the diagrams in \Cref{ex:staircase_containment}).  The action of appending/removing a block at the end of a staircase diagram will be an important operation in the following sections, so we summarize the process.  

Let $\mcD=\{B_1,\ldots, B_k\}$ be a staircase diagram on $[n-1]$ such that $B_{k}=[r,n-1]$ and $B_{k-1}\cap B_k=[r,m-1]$ and let $\mcD'=\mcD\setminus\{B_k\}$ denote the sub-staircase diagram on $[m-1]$.  Let $\Lambda(\mcD)=w(1)\cdots w(n)$ and $\Lambda(\mcD')=w'(1)\cdots w'(r)$.  If $B_k$ is maximal in $\mcD$, then \Cref{lem:containment_expand} and \Cref{lem:containment_delete} imply
\begin{equation}\label{eqn:B_k_max_block}
w(i)=\begin{cases}
    w'(i) & \text{if $w'(i)< r$}\\
    w'(i)+n-r & \text{if $r\leq w'(i)\leq m$}\\
    n+m-i & \text{if $m< i\leq n$}\\
\end{cases},
\end{equation}
and if $B_k$ is minimal, then 
\begin{equation}\label{eqn:B_k_min_block} 
w(i)=\begin{cases}
    w'(i) & \text{if $i<r$}\\
    n+m-i & \text{if $r\leq i\leq r+n-m$}\\
    w'(i-n+m) & \text{if $i>r+n-m$}
\end{cases}.
\end{equation}
This relationship between $\Lambda(\mcD)$ and $\Lambda(\mcD')$ can be visualized by dividing the permutation matrix of $\Lambda(\mcD')$ into two regions corresponding to the overlap $B_{k-1}\cap B_k$ as in \Cref{fig:matrix_of_sub_D}.  The regions are oriented either horizontally or vertically depending on if $B_k$ is either maximal (regions $A_1, A_2$) or minimal (regions $A_3, A_4$).


\begin{figure}[h!]
$$\begin{tikzpicture}[scale=0.35,baseline={(0,1.25)}]
\draw (0,0) rectangle (7,7);
\draw (0,4) -- (7,4);
\draw [decorate, decoration={brace, mirror, amplitude=6pt},thick] (7.25,4.1) -- (7.25,6.9);
\draw (9.5,5.5)node {$r-1$};
\draw [decorate, decoration={brace, mirror, amplitude=6pt},thick] (7.25,0.1) -- (7.25,3.9);
\draw (10.5,2)node {$m-r+1$};
\draw [decorate, decoration={brace, mirror, amplitude=6pt},thick] (0.1,-0.25) -- (6.9,-0.25);
\draw (3.5,-1.5)node {$m$};
\draw (3.5,5.5)node {$A_1$};
\draw (3.5,2)node {$A_2$};
\end{tikzpicture} = \quad
\begin{tikzpicture}[scale=0.35,baseline={(0,1.25)}]
\draw (0,0) rectangle (7,7);
\draw (3,0) -- (3,7);
\draw [decorate, decoration={brace, mirror, amplitude=6pt},thick] (7.25,0.1) -- (7.25,6.9);
\draw (9,3.5)node {$m$};
\draw [decorate, decoration={brace, mirror, amplitude=6pt},thick] (0.1,-0.25) -- (2.9,-0.25);
\draw (1.5,-1.5)node {$r-1$};
\draw [decorate, decoration={brace, mirror, amplitude=6pt},thick] (3.1,-0.25) -- (6.9,-0.25);
\draw (5.5,-1.5)node {$m-r+1$};
\draw (1.5,3.5)node {$A_3$};
\draw (5,3.5)node {$A_4$};
\end{tikzpicture}$$
\caption{The matrix of $\Lambda(\mcD')$ split along $B_{k-1}\cap B_{k}=[r,m-1]$.}\label{fig:matrix_of_sub_D}
\end{figure}

The matrix of $\Lambda(\mcD)$ is obtained by inserting an anti-diagonal square of dimension $(n-m)$ either on the right side (if $B_k$ is maximal) or bottom side (if $B_k$ is minimal) of $\Lambda(\mcD')$ between the two regions as in \Cref{fig:matrix_append_B_k}.

\begin{figure}[h!]
\begin{tikzpicture}[scale=0.35]
\draw (0,0) rectangle (10,10);
\draw[fill=blue!20] (7,4) rectangle (10,7);
\draw[fill=black!20] (0,4) rectangle (7,7);
\draw[fill=black!20] (7,0) rectangle (10,4);
\draw[fill=black!20] (7,7) rectangle (10,10);
\draw[step=1.0,black] (7,4) grid (10,7);
\fill (7.5,4.5) circle (7pt);
\fill (9.5,6.5) circle (7pt);
\draw[loosely dotted,thick,line width = 0.5mm] (7.5,4.5) -- (9.5,6.5);
\draw [decorate, decoration={brace, mirror, amplitude=6pt},thick] (10.25,4.1) -- (10.25,6.9);
\draw (12.75, 5.5) node {$n-m$};
\draw [decorate, decoration={brace, mirror, amplitude=6pt},thick] (10.25,7.1) -- (10.25,9.9);
\draw (12.55, 8.5)node {$r-1$};
\draw [decorate, decoration={brace, mirror, amplitude=6pt},thick] (10.25,0.1) -- (10.25,3.9);
\draw (13.55,2)node {$m-r+1$};
\draw [decorate, decoration={brace, mirror, amplitude=6pt},thick] (7.1,-0.25) -- (9.9,-0.25);
\draw (8.5,-1.5)node {$n-m$};
\draw [decorate, decoration={brace, mirror, amplitude=6pt},thick] (0.1,-0.25) -- (6.9,-0.25);
\draw (3.5,-1.5)node {$m$};
\draw (3.5,8.5)node {$A_1$};
\draw (3.5,2)node {$A_2$};
\end{tikzpicture}\hspace{0.5in} \begin{tikzpicture}[scale=0.35]
\draw (0,0) rectangle (10,10);
\draw[fill=red!20] (3,0) rectangle (6,3);
\draw[fill=black!20] (3,3) rectangle (6,10);
\draw[fill=black!20] (0,0) rectangle (3,3);
\draw[fill=black!20] (6,0) rectangle (10,3);
\draw[step=1.0,black] (3,0) grid (6,3);
\fill (3.5,0.5) circle (7pt);
\fill (5.5,2.5) circle (7pt);
\draw[loosely dotted,thick,line width = 0.5mm] (3.5,0.5) -- (5.5,2.5);
\draw [decorate, decoration={brace, mirror, amplitude=6pt},thick] (10.25,3.1) -- (10.25,9.9);
\draw (11.75,6.5)node {$m$};
\draw [decorate, decoration={brace, mirror, amplitude=6pt},thick] (10.25,0.1) -- (10.25,2.9);
\draw (12.75,1.5)node {$n-m$};
\draw [decorate, decoration={brace, mirror, amplitude=6pt},thick] (0.1,-0.25) -- (2.9,-0.25);
\draw (1.5,-1.5)node {$r-1$};
\draw [decorate, decoration={brace, mirror, amplitude=6pt},thick] (3.1,-0.25) -- (5.9,-0.25);
\draw (4.75,-1.5)node {$n-m$};
\draw [decorate, decoration={brace, mirror, amplitude=6pt},thick] (6.1,-0.25) -- (9.9,-0.25);
\draw (9.25,-1.5)node {$m-r+1$};
\draw (1.5,6.5)node {$A_3$};
\draw (8,6.5)node {$A_4$};
\end{tikzpicture}
\caption{The matrix of $\Lambda(\mcD)$ relative to $\Lambda(\mcD')$ when $B_k$ is respectively maximal and minimal.}\label{fig:matrix_append_B_k}
\end{figure}

We say a region $A$ of permutation (or permutation matrix) is \textbf{decreasing} if for every $i<j$ such that $(i,w(i)),(j,w(j))\in A$, we have $w(i)>w(j)$. 

\begin{lemma}\label{lem:regions_A_2_4_decreasing}
Let $\mcD=\{B_1,\ldots,B_k\}$ be a staircase diagram.  If $B_k$ is maximal, then region $A_2$ from \Cref{fig:matrix_append_B_k} is decreasing in $\Lambda(\mcD)$.  Similarly, if $B_k$ is minimal, then region $A_4$ is decreasing in $\Lambda(\mcD)$.
\end{lemma}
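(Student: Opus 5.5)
The plan is to prove the statement for $B_k$ maximal and deduce the minimal case by duality. If $B_k$ is minimal in $\mcD$, then it is maximal in $\mcD^*$. By \Cref{prop:staircase_permutation_properties}(4) we have $\Lambda(\mcD^*)=\Lambda(\mcD)^{-1}$, and inverting transposes the permutation matrix, carrying region $A_2$ to region $A_4$. A transposed decreasing region is again decreasing, so it suffices to treat $B_k$ maximal.

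For $B_k$ maximal, write $w'=\Lambda(\mcD')\in\mfS_m$. By \eqref{eqn:B_k_max_block}, the entries of $\Lambda(\mcD)$ in region $A_2$ are exactly the entries $w'(i)$ with $w'(i)\ge r$, each shifted up by $n-m$; the positions are unchanged and the order among them is preserved. So the claim reduces to the following statement about $\mcD'$ alone: the entries of $w'$ with values in $[r,m]$ occur in decreasing order. Equivalently, $s_j$ is a left descent of $w'$ for every $j\in[r,m-1]$. Here $[r,m-1]=B_{k-1}\cap B_k\subseteq B_{k-1}$, and $B_{k-1}$ is the terminal block of $\mcD'$, with support ending at $m-1$. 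I will prove this reduced statement by induction on $k$, splitting according to whether $B_{k-1}$ is minimal or maximal in $\mcD'$. Write $B_{k-1}=[r',m-1]$ and let $m''-1$ be the right end of $B_{k-2}$.

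Case 1: $B_{k-1}$ is minimal in $\mcD'$. Since $B_{k-1}\prec B_k$ and $B_{k-1}$ is minimal in $\mcD'$, either $k=2$, or $B_{k-2}\succ B_{k-1}$. In the latter situation $B_{k-1}$ is a local minimum between two larger blocks, so condition (2) of \Cref{def:staircase} forces $B_{k-2}\cup B_k$ to be disconnected. Since $r\ge l_k$, this gives $m''\le r-1$. Now apply \eqref{eqn:B_k_min_block} to $\mcD'$ with respect to $\mcD''=\mcD'\setminus\{B_{k-1}\}$. Every value of $w'$ exceeding $m''$ lies in the inserted anti-diagonal block, whose entries have the form $m+m''-i$ and so decrease with position. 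All values $\ge r$ exceed $m''$, so they lie on this anti-diagonal and are decreasing. The case $k=2$ is the same: $w'$ is then a single longest element $u_{B_1}$ on its support, which is decreasing.

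Case 2: $B_{k-1}$ is maximal in $\mcD'$. Apply \eqref{eqn:B_k_max_block} to $\mcD'$ with respect to $\mcD''$. The entries of $w'$ then split into three groups:
\begin{enumerate}
\item old values $<r'$, which are unchanged;
\item old values in $[r',m'']$, shifted up by $m-m''$;
\item new entries $m+m''-i$ in the last positions $i>m''$.
\end{enumerate}
Group (2) is decreasing by the induction hypothesis applied to $\mcD'$ with terminal block $B_{k-1}$, and group (3) is an anti-diagonal. What remains is to check two things. First, every value $\ge r$ of $w'$ lies in groups (2) and (3); this uses $r\ge r'$. Second, every group (2) entry is larger than every group (3) entry at a later position. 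The second point follows from comparing the value ranges: group (2) values are at least $r'+m-m''$, while group (3) values are at most $m$.

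I expect the main obstacle to be the index bookkeeping in Case 2. One must confirm precisely which values fall into the window $[r,m]$ once the shift by $m-m''$ is applied, and pin down the exact range of the new anti-diagonal. A cleaner fallback, if the indices become unwieldy, is to argue with the factorization $w'=u_{B_{k-1}}u_{J(B_{k-1})}\Lambda(\mcD'')$ and show directly that $s_j$ is a left descent for $j\in B_{k-1}\setminus J(B_{k-1})$. The remaining $j\in J(B_{k-1})\cap[r,m-1]$ would then be handled by the inductive statement.
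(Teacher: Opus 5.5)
Your route is sound in outline, but one step in Case~2 is mis-justified and needs repair.

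\textbf{The faulty step.} You take the new anti-diagonal entries to be $m+m''-i$, copying \eqref{eqn:B_k_max_block}. You then infer that group (2) lies above group (3) from ``group (2) $\ge r'+m-m''$, group (3) $\le m$''. Since $r'\le m''$, we have $r'+m-m''\le m$, so these two bounds do not separate the groups.

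\textbf{The fix.} The displayed formulas \eqref{eqn:B_k_max_block} and \eqref{eqn:B_k_min_block} contain misprints; you already corrected the shift to $n-m$. Computing directly, $u_{[r',m-1]}u_{[r',m''-1]}$ sends a value $v\in[r',m'']$ to $v+m-m''$ and a value $v\in[m''+1,m]$ to $r'+m-v$. So $w'(i)=r'+m-i$ for $m''<i\le m$. Group (2) then has values exactly $[r'+m-m'',m]$ and group (3) exactly $[r',r'+m-m''-1]$, which gives the separation. In Case~1 the inserted block is likewise $w'(i)=r'+m-i$ for $r'\le i\le r'+m-m''-1$, with values $[m''+1,m]$, so your conclusion there stands.

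\textbf{A missing subcase.} Your dichotomy ``$k=2$ or $B_{k-2}\succ B_{k-1}$'' omits the case where $B_{k-2}\cup B_{k-1}$ is disconnected. That case is trivial: $\Lambda(\mcD')$ is then $\Lambda(\mcD'')$ times $u_{B_{k-1}}$ on disjoint supports.

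\textbf{A cleaner Case~2.} Case~2 can be done with no index bookkeeping. Write $D_L(\cdot)$ for the left descent set. The inductive hypothesis $J(B_{k-1})\subseteq D_L(\Lambda(\mcD''))$ makes $w'=u_{B_{k-1}}\cdot\bigl(u_{J(B_{k-1})}\Lambda(\mcD'')\bigr)$ a length-additive product. This is because the right factor has no left descents in $B_{k-1}$: none in $J(B_{k-1})$ by the hypothesis, and none in $[m'',m-1]$ since that lies outside its support. Hence $[r,m-1]\subseteq B_{k-1}\subseteq D_L(w')$.

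\textbf{Comparison with the paper.} With these repairs your proof is correct, and it is genuinely different from the paper's. The paper argues by contradiction:
\begin{enumerate}
\item Suppose $A_2$ contains an increasing pair at positions $i<j$.
\item The ascent $w(m)<w(m+1)$ from \Cref{prop:staircase_permutation_properties}(2) forces $w(m)<r$, hence $j<m$.
\item Then $w(i)w(j)w(m)w(m+1)$ is an occurrence of $3412$, contradicting smoothness of $\Lambda(\mcD)$.
\end{enumerate}
Your induction instead proves directly that $J(B_k)=[r,m-1]\subseteq D_L(\Lambda(\mcD'))$. It uses only the insertion formulas and staircase axiom~(2), which enters in your Case~1. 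It needs neither \Cref{thm:smooth_bijection} nor the ascent property from \cite{RS17}; smoothness is where the paper's argument absorbs the constraint you take from axiom~(2). The paper's proof is a few lines but imports two nontrivial results. Yours is self-contained and exposes the descent structure behind the lemma, at the cost of the index bookkeeping you anticipated.
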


\begin{proof}
By the duality from \Cref{prop:staircase_permutation_properties}, part (4), it suffices to assume that $B_k$ is maximal and show that region $A_2$ from \Cref{fig:matrix_append_B_k} is decreasing.  Suppose that $A_2$ is not decreasing and therefore contains nodes $(i,w(i))$ and $(j,w(j))$ such that $i<j$ and $w(i)<w(j)$.  Since $B_{k-1}\prec B_k$, \Cref{prop:staircase_permutation_properties}, part (1) implies that $w(m)<w(m+1)$.  But now $\Lambda(\mcD)$ contains the pattern 3412, which contradicts the fact that it is a smooth permutation (see \Cref{fig:perm_contains_3412}).  Thus region $A_2$ must be decreasing.
\begin{figure}[h!]
\begin{tikzpicture}[scale=0.35]
\draw (0,0) rectangle (10,10);
\draw[fill=black!20] (7,7) rectangle (10,10);
\draw[fill=black!20] (0,4) rectangle (7,7);
\draw[fill=black!20] (7,0) rectangle (10,4);
\draw[fill=blue!20] (7,4) rectangle (10,7);
\draw[step=1.0,black] (7,4) grid (10,7);
\draw (6,0) -- (6,10);
\fill (7.5,4.5) circle (7pt);
\fill (9.5,6.5) circle (7pt);
\draw[loosely dotted,thick,line width = 0.5mm] (7.5,4.5) -- (9.5,6.5);
\draw[thick, red]  (7.5,4.5) circle (11pt);
\draw [decorate, decoration={brace, mirror, amplitude=6pt},thick] (10.5,7.1) -- (10.5,9.9);
\draw (12.75,8.5)node {$r-1$};
\draw [decorate, decoration={brace, mirror, amplitude=6pt},thick] (10.5,4.1) -- (10.5,6.9);
\draw (13,5.5)node {$n-m$};
\draw [decorate, decoration={brace, mirror, amplitude=6pt},thick] (0.1,-0.5) -- (6.9,-0.5);
\draw (3.5,-1.5)node {$m$};
\draw [decorate, decoration={brace, mirror, amplitude=6pt},thick] (7.1,-0.5) -- (9.9,-0.5);
\draw (8.5,-1.5)node {$n-m$};
\draw (-3,5)node {$\Lambda(\mcD)=$};
\fill (2,3) circle (7pt);
\fill (4,1) circle (7pt);
\draw[thick, red]  (2,3) circle (11pt);
\draw[thick, red]  (4,1) circle (11pt);
\fill (6.5,7.5) circle (7pt);
\draw[thick, red]  (6.5,7.5) circle (11pt);
\end{tikzpicture}
\caption{The permutation $\Lambda(\mcD)$ containing 3412 if region $A_2$ is not decreasing.}\label{fig:perm_contains_3412}
\end{figure}
\end{proof}

Note that there are symmetrical analogues of \Cref{lem:containment_expand},  \Cref{lem:containment_delete} and \Cref{lem:regions_A_2_4_decreasing} where we replace the last block $B_k$ with the first block $B_1$ in $\mcD$.  This symmetry can be seen by applying the group automorphism that sends $s_i\mapsto s_{n-i}$ to the permutation $\Lambda(\mcD)$.  The following proposition is a consequence of \Cref{lem:containment_expand} and \Cref{lem:containment_delete} and their symmetrical ``$B_1$" analogues.

\begin{proposition}\label{prop:subdiagram_containment}
    Let $\mcD=\{B_1,\ldots, B_k\}$ be a staircase diagram and let $\mcD'=\{B_x,\ldots, B_y\}$ denote a subdiagram of consecutive blocks in $\mcD$.  Then $\Lambda(\mcD)$ contains $\Lambda(\mcD')$.
\end{proposition}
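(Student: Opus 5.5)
The plan is to peel blocks off the two ends of $\mcD$ one at a time, using transitivity of pattern containment. Set
\[\mcD^{(j)}:=\{B_x,\ldots,B_j\}\quad\text{for } x\le j\le k,\]
so that $\mcD^{(k)}=\{B_x,\ldots,B_k\}$ and $\mcD^{(y)}=\mcD'$. Each $\mcD^{(j)}$, with the poset structure induced from $\mcD$, is again a staircase diagram. Indeed, the cover relations among consecutive blocks are unchanged, and condition (2) of \Cref{def:staircase} only concerns triples of consecutive blocks, all of which are still present or else impose no condition.

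\emph{Step 1 (deleting blocks on the right).} For $y<j\le k$, the diagram $\mcD^{(j-1)}$ is obtained from $\mcD^{(j)}$ by removing its last block $B_j$. By \Cref{lem:containment_delete}, $\Lambda(\mcD^{(j)})$ contains $\Lambda(\mcD^{(j-1)})$. Here we use the convention that each $\Lambda$ is viewed as a pattern on its translated support. This convention is harmless, because translating the support of a staircase diagram just conjugates by a shift, which yields the same pattern. One point needs checking: \Cref{lem:containment_delete} is stated when the diagram lives on $[n]$ and the subdiagram on $[n-1]$. When $S(\mcD^{(j-1)})$ ends strictly before $S(\mcD^{(j)})$ minus one, we first use \Cref{lem:containment_expand}. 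This is exactly the ``without loss of generality'' already built into the proof of \Cref{lem:containment_delete}, so the lemma applies as stated once we interpret $\Lambda$ via translated supports.

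\emph{Step 2 (deleting blocks on the left).} Starting from $\{B_x,\ldots,B_k\}\subseteq\mcD$, we remove $B_1,\ldots,B_{x-1}$ one at a time from the front. For this we use the symmetric ``$B_1$'' analogue of \Cref{lem:containment_delete}. It comes from applying the automorphism $s_i\mapsto s_{n-i}$, i.e.\ conjugation by $w_0$. This automorphism reverses the order of the blocks of a staircase diagram while keeping the poset relations. It also sends a permutation $w$ to $w_0ww_0$, and containment is preserved under that map, since $p\le w$ as patterns if and only if $w_0pw_0\le w_0ww_0$. Hence $\Lambda(\{B_j,\ldots,B_k\})$ contains $\Lambda(\{B_{j+1},\ldots,B_k\})$ for each $j<x$.

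\emph{Step 3 (combining).} Chaining the containments from Steps 1 and 2 and using transitivity gives that $\Lambda(\mcD)$ contains $\Lambda(\mcD')$.

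The main obstacle is not conceptual but bookkeeping. One must verify that the induced subdiagrams are genuine staircase diagrams, so that $\Lambda$ is defined on them (\Cref{thm:smooth_bijection}). One must also check that the support-translation convention is compatible with the hypotheses of \Cref{lem:containment_delete} and \Cref{lem:containment_expand}. A further point is that removing an end block may shrink the support by more than one element; this happens when the last block is disjoint from the previous one, or overlaps it only partially. I would handle that case by first padding with \Cref{lem:containment_expand} in reverse, i.e.\ regarding the smaller diagram as obtained from one with an extended final block, exactly as in the proof of \Cref{lem:containment_delete}.
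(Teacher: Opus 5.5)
Your proposal is correct and is essentially the paper's argument. The paper obtains this proposition directly from \Cref{lem:containment_expand}, \Cref{lem:containment_delete}, and their ``$B_1$'' analogues coming from the automorphism $s_i\mapsto s_{n-i}$, which is exactly your scheme of peeling off end blocks one at a time and chaining the containments by transitivity. One small quibble concerns your last paragraph. It is unnecessary, since \Cref{lem:containment_delete} is already stated for an arbitrary last block. It is also not quite right as written: when $B_{k-1}\cup B_k$ is disconnected, shrinking $B_k$ via \Cref{lem:containment_expand} cannot reduce to the ``support grows by one'' case. In that case \Cref{prop:staircase_wk_decomp}(3) instead gives a decomposing factorization $\Lambda(\mcD)=\Lambda(\mcD\setminus\{B_k\})\cdot u_{B_k}$, from which the containment is immediate.
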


In the next two subsections, we consider two families staircase diagrams that are related to \Cref{thm:semi-polished_GF12}.


\subsection{Overlapping blocks}
Given two consecutive blocks $B_i$ and $B_{i+1}$ in a staircase diagram, we call the size of their intersection the \textbf{overlap} between $B_i$ and $B_{i+1}$.  In this section, we study how overlaps between adjacent blocks in a staircase diagram relate to permutation pattern avoidance.  

\begin{definition}
 Let $\mcD=\{B_1,\ldots,B_k\}$ be a staircase diagram.  We say $\mcD$ \textbf{contains a $q$-overlap} if there exists blocks $B_i$ and $B_{i+1}$ such that $|B_i\cap B_{i+1}|\geq q$.  Otherwise, we say $\mcD$\textbf{ avoids $q$-overlaps}.   
\end{definition}

We consider two staircase diagrams of minimal support size that contain a $q$-overlap.  Define 
$$\mcOv^+_q:=\{[1,q+1]\prec [2,q+2]\}\quad\text{and}\quad \mcOv^-_q:=\{[1,q+1]\succ [2,q+2]\}.$$
We will remove the numbers from pictures of staircase diagrams if they are not relevant.  Pictorially, we have  
$$\begin{tikzpicture}[scale=0.4,baseline={(0,0.2)}]
            \ppsansnumbers{6}{{0,1,1,1,1,1},{2,2,2,2,2,0}}
            \draw [decorate, decoration={brace, mirror, amplitude=6pt},thick] (-4,-0.5) -- (0,-0.5);
            \draw (-2,-2)node {$q$ boxes};\end{tikzpicture}\qquad\text{and}\qquad
            \begin{tikzpicture}[scale=0.4,baseline={(0,0.2)}]
            \ppsansnumbers{6}{{2,2,2,2,2,0},{0,1,1,1,1,1}}
            \draw [decorate, decoration={brace, mirror, amplitude=6pt},thick] (-4,-0.5) -- (0,-0.5);
            \draw (-2,-2)node {$q$ boxes};\end{tikzpicture}
            $$
and since the support $S(\mcOv^{\pm}_q)=[q+2]$, the permutations $\Lambda(\mcOv^{\pm}_q)\in \mfS_{q+3}$.  

\begin{lemma}\label{lem:overlaps_1}
If $q\geq 0$, then 
$$\Lambda(\mcOv^+_q)=(q+3)(q+2)\cdots 4312\quad\text{and}\quad\Lambda(\mcOv^-_q)={(q+2)(q+3)(q+1)q}\cdots 321.$$

$$\begin{tikzpicture}[scale=0.4]
\draw[step=1.0,black] (0,0) grid (6,6);
\fill (0.5,0.5) circle (7pt);
\fill (1.5,1.5) circle (7pt);
\fill (3.5,3.5) circle (7pt);
\fill (4.5,5.5) circle (7pt);
\fill (5.5,4.5) circle (7pt);
\draw[loosely dotted,thick,line width = 0.4mm] (1.5,1.5) -- (3.5,3.5);
\end{tikzpicture}\hspace{1.5in}\begin{tikzpicture}[scale=0.4]
\draw[step=1.0,black] (0,0) grid (6,6);
\fill (0.5,1.5) circle (7pt);
\fill (1.5,0.5) circle (7pt);
\fill (2.5,2.5) circle (7pt);
\fill (3.5,3.5) circle (7pt);
\fill (5.5,5.5) circle (7pt);
\draw[loosely dotted,thick,line width = 0.4mm] (3.5,3.5) -- (5.5,5.5);
\end{tikzpicture}$$
Moreover, if a staircase diagram $\mcD$ contains a $q$-overlap, then $\Lambda(\mcD)$ contains at least one of $\Lambda(\mcOv^{+}_q)$ or $\Lambda(\mcOv^{-}_q).$
\end{lemma}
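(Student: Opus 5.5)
The formulas for $\Lambda(\mcOv^{\pm}_q)$ come straight from \Cref{def:staircase_permutation}, and the containment statement follows by reducing a general diagram to its overlapping pair of blocks and then shrinking those blocks.

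\textbf{The formula for $\Lambda(\mcOv^+_q)$.} Here $[2,q+2]$ is the unique maximal block and $J([2,q+2])=[2,q+1]$. So
$$\Lambda(\mcOv^+_q)=u_{[2,q+2]}\cdot u_{[2,q+1]}\cdot u_{[1,q+1]}.$$
The product $u_{[2,q+2]}u_{[2,q+1]}$ is a cyclic shift, and I will compute its effect using \eqref{eqn:B_k_max_block} with $\mcD'=\{[1,q+1]\}$. Then $\Lambda(\mcD')=(q+2)(q+1)\cdots 21\in\mfS_{q+2}$, with $r=2$, $m=q+2$ and $n=q+3$. The values $\geq 2$ are raised by one and the final entry becomes $r=2$. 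This gives $(q+3)(q+2)\cdots 3\,1\,2$, which is the claimed $(q+3)\cdots 4312$.

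\textbf{The formula for $\Lambda(\mcOv^-_q)$.} I will use duality. By \Cref{prop:staircase_permutation_properties}(4), $\Lambda(\mcOv^-_q)=\Lambda(\mcOv^+_q)^{-1}$. Inverting $(q+3)(q+2)\cdots 4312$ gives $(q+2)(q+3)(q+1)\cdots 21$, which matches. As a check, for $q=0$ the two permutations are $312$ and $231$, which are inverses.

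\textbf{The containment statement.} Suppose $|B_i\cap B_{i+1}|\geq q$. Since $B_i\cap B_{i+1}\neq\varnothing$ (for $q\geq1$; the case $q=0$ needs only that $B_i\cup B_{i+1}$ is connected), axiom (1) of \Cref{def:staircase} makes $(B_i,B_{i+1})$ a cover relation. By \Cref{prop:subdiagram_containment}, $\Lambda(\mcD)$ contains $\Lambda(\{B_i,B_{i+1}\})$, so it suffices to treat a two-block diagram $\{B_1,B_2\}$ with $B_1=[a,b]$, $B_2=[c,d]$, $a<c$, $b<d$, and overlap $b-c+1\geq q$.

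I then shrink this diagram to $\mcOv^\pm_q$, keeping the order relation, while containment is preserved at each step.
\begin{itemize}
\item \emph{Trimming the outer ends.} Deleting the box $d$ from $B_2$ (when $d>b+1$) is the inverse of \Cref{lem:containment_expand}. Deleting the box $a$ from $B_1$ (when $a<c-1$) is the symmetric $B_1$-analogue. Each step keeps the same order relation and gives a contained pattern, reducing to $a=c-1$ and $d=b+1$.
\item \emph{Shrinking the overlap.} Now $\{[c-1,b]\ast[c,b+1]\}$, with $\ast\in\{\prec,\succ\}$, is exactly $\mcOv^\pm_{q'}$ with $q'=b-c+1\geq q$. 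It remains to show that $\Lambda(\mcOv^\pm_{q'})$ contains $\Lambda(\mcOv^\pm_q)$. This is immediate from the explicit formulas: deleting the entry $q'+3$ (respectively its position in the inverse) from $(q'+3)\cdots4312$ and standardizing gives $(q'+2)\cdots 4312$. Induction on $q'$ finishes.
\end{itemize}

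The main obstacle is checking that trimming a block end truly stays within the hypotheses of \Cref{lem:containment_expand} and its $B_1$-analogue. Those lemmas require the trimmed block to remain nonempty, keep the staircase axioms, and stay terminal. This holds because $B_1$ and $B_2$ each keep at least the overlap plus one box. The explicit formula in the shrinking step sidesteps any subtlety about reducing the overlap itself.
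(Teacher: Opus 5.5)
Your proposal is correct and follows the paper's route: the explicit formulas come from the cyclic-shift/append description, and the containment comes from \Cref{prop:subdiagram_containment} plus monotonicity in $q$ read off those formulas. The differences are minor: you obtain $\Lambda(\mcOv^-_q)$ by inversion via \Cref{prop:staircase_permutation_properties}(4) rather than from the minimal-block picture, and you spell out the trimming of the outer block ends via \Cref{lem:containment_expand}, which the paper folds into its citation of \Cref{prop:subdiagram_containment}. Your caveat at $q=0$ is needed and cannot be removed: the disconnected diagram $\{\{1\},\{3\}\}$ has $\Lambda=2143$ and contains a $0$-overlap, yet $2143$ avoids both $312$ and $231$, so the second part holds only for $q\geq 1$, which is all that \Cref{thm:overlaps} uses.
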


\begin{proof}
The one-line notation for $\Lambda(\mcOv^{+}_q)$ or $\Lambda(\mcOv^{-}_q)$ follows directly from \Cref{fig:matrix_of_sub_D} and \Cref{fig:matrix_append_B_k}.  For the second part, suppose that $\mcD$ contains a $q$-overlap.  Then there exist blocks $B_i,B_{i+1}\in \mcD$ such that $|B_i\cap B_{i+1}|=q'\geq q$.  \Cref{prop:subdiagram_containment} implies that $\Lambda(\mcD)$ contains one of  $\Lambda(\mcOv^{+}_{q'})$ or $\Lambda(\mcOv^{-}_{q'})$, which contain $\Lambda(\mcOv^{+}_q)$ or $\Lambda(\mcOv^{-}_q)$, respectively.
\end{proof}

Observe that the southwest (resp. northeast) anti-diagonal block of $\Lambda(\mcOv^{+}_q)$ (resp. $\Lambda(\mcOv^{-}_q)$ is of size $q+1$. Next we state our main theorem on $q$-overlaps.

\begin{theorem}\label{thm:overlaps}
Let $q\geq 1$.  The following are equivalent:
\begin{enumerate}
    \item The staircase diagram $\mcD$ avoids $q$-overlaps.
    \item The permutation $\Lambda(\mcD)$ avoids the patterns $\Lambda(\mcOv^{\pm}_q)$.
\end{enumerate}
\end{theorem}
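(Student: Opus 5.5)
The direction (2)$\Rightarrow$(1) is the contrapositive of \Cref{lem:overlaps_1}: if $\mcD$ contains a $q$-overlap, then $\Lambda(\mcD)$ contains $\Lambda(\mcOv^+_q)$ or $\Lambda(\mcOv^-_q)$. So all the work is in (1)$\Rightarrow$(2). Here I assume $\mcD$ avoids $q$-overlaps and show that $\Lambda(\mcD)$ contains neither pattern.

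\textbf{Step 1: reduce to a strongly indecomposable diagram.} Both patterns $\Lambda(\mcOv^{+}_q)=(q+3)\cdots 4312$ and $\Lambda(\mcOv^-_q)$ should be strongly indecomposable for $q\geq 1$. I will check this directly from the one-line notation. For example, in $(q+3)(q+2)\cdots 4312$ every prefix of length $i<q+2$ contains the value $q+3$, which exceeds $i+1$. The same holds for the inverse, and $\Lambda(\mcOv^-_q)$ is handled by the same check or by duality, since it is the inverse of the other pattern by \Cref{prop:staircase_permutation_properties}(4). Then \Cref{prop:staircase_wk_decomp} and \Cref{lem:wk_decomposable_patterns} let me split $\mcD$ at any pair of consecutive disjoint blocks. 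Both factors are again staircase diagrams avoiding $q$-overlaps, so by induction on the number of blocks I may assume every consecutive pair of blocks meets, with overlap between $1$ and $q-1$.

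\textbf{Step 2: induction on $k=|\mcD|$ by adding the last block.} Write $\mcD'=\mcD\setminus\{B_k\}$; by induction $\Lambda(\mcD')$ avoids both patterns. By duality (\Cref{prop:staircase_permutation_properties}(4), which swaps the two patterns and swaps maximal with minimal) I may assume $B_k$ is maximal. Then $\Lambda(\mcD)$ is obtained from $\Lambda(\mcD')$ as in \Cref{fig:matrix_append_B_k}: a decreasing anti-diagonal block of size $n-m$ is inserted in the new right-hand columns, at the value rows between $A_1$ and $A_2$. Moreover, region $A_2$ is decreasing by \Cref{lem:regions_A_2_4_decreasing}, and it contains only $m-r+1=|B_{k-1}\cap B_k|+1\le q$ entries.

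Suppose an occurrence of either pattern uses at least one new entry, that is, an entry in the last $n-m$ columns.
\begin{itemize}
\item For $\Lambda(\mcOv^+_q)$: the new entries form a decreasing run in the rightmost positions, so they can only play the roles of a terminal segment of the pattern. The pattern ends in $\ldots 312$, where $1,2$ increase, so the new entries can only be the final ``$2$'', or a ``$1$'' with nothing after it. Tracing the value ranges, the $q+1$ pattern entries of value above the new entry must then lie in $A_1$, or in rows interleaved with $A_2$. I will combine this with the decreasing structure and the inductive hypothesis to get a contradiction.
\item For $\Lambda(\mcOv^-_q)$, whose long decreasing tail has length $q+1$: the tail values below the new block must come from $A_2$, which has at most $q$ entries, and the rest is forced into a decreasing sequence too short to fill the pattern.
\end{itemize}

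\textbf{Main obstacle.} The hard part is this case analysis, namely locating exactly which pattern entries can sit in $A_1$, in $A_2$, or in the new anti-diagonal, and showing that the size bound $|A_2|\le q$ (or the corresponding bound for $A_4$) always breaks the occurrence. My first attempt is a counting argument: each pattern has a decreasing run of length $q+1$ that must be forced into $A_2$, either directly or after replacing new entries by $A_2$ entries in the same relative positions. If that fails, I would instead replace the new entries by the entries of $\Lambda(\mcD')$ at position $m$ or at values in $[r,m]$, producing an occurrence inside $\Lambda(\mcD')$ and contradicting the inductive hypothesis.
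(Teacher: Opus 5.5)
Your setup is the paper's: induction on $k$, deleting $B_k$, using duality (which swaps the two patterns) to assume $B_k$ maximal, and using that $A_2$ is decreasing (\Cref{lem:regions_A_2_4_decreasing}) with $|A_2|=|B_{k-1}\cap B_k|+1\le q$. Your Step 1 is valid but not needed.

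The gap is that the case analysis you defer as the ``main obstacle'' is the entire content of (1)$\Rightarrow$(2), and your sketches of it go wrong. The missing observation is that positions $1,\dots,m$ of $\Lambda(\mcD)$ never take values in the band $[r,r+n-m-1]$. That band is occupied by the new decreasing entries in positions $m+1,\dots,n$. So in an occurrence that uses a new entry, any pattern entry in a position $\le m$ lies in $A_1$ if it is smaller than that new entry, and in $A_2$ if it is larger. In particular, your claim that the $q+1$ large entries of $\Lambda(\mcOv^+_q)$ may lie ``in $A_1$, or in rows interleaved with $A_2$'' is false. They all lie in $A_2$, and that is exactly where the contradiction comes from.

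Here is how each case closes. The new entries form a decreasing suffix of the occurrence. For $\Lambda(\mcOv^+_q)$, only the final ``2'' can therefore be new; a new ``1'' is impossible, since ``2'' would follow it. Then the entries playing $q+3,\dots,3$ sit in positions $\le m$ with values above a band value, hence in $A_2$. That requires $q+1$ entries in $A_2$, which has at most $q$.

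For $\Lambda(\mcOv^-_q)$, the counting you propose does not work in general. With ``$q+2$'' and ``$q+3$'' in $A_2$, the whole tail $q+1,\dots,1$ could a priori sit in the new block, whose length $n-m$ is unbounded, so no size bound is violated. What rules this out is monotonicity alone. The final ``1'' must be new, and every other pattern entry is larger, so the occurrence lies in $A_2$ together with the new block. That set is decreasing: $A_2$ is decreasing, and every $A_2$ entry is to the left of and larger than every new entry. But the pattern begins with the ascent $(q+2)(q+3)$.

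In both cases, an occurrence using no new entry lies in positions $1,\dots,m$. These are order-isomorphic to $\Lambda(\mcD')$, so the inductive hypothesis finishes. This is exactly the paper's argument.
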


\begin{proof}
If $\mcD$ contains a $q$-overlap, then \Cref{lem:overlaps_1} implies $\Lambda(\mcD)$ contains one of $\Lambda(\mcOv^{+}_q)$ or $\Lambda(\mcOv^{-}_q)$.  

For the converse, we assume that $\mcD=\{B_1,\ldots, B_k\}$ avoids $q$-overlaps and proceed by induction on $k$.  First, if $k=1$, then $\mcD$ vacuously avoids $q$-overlaps and $\Lambda(\mcD)=u_{B_1}$ clearly avoids $\Lambda(\mcOv^{\pm}_q)$.  Now suppose $k>1$ and let $\mcD':=\{B_1,\ldots,B_{k-1}\}$ denote the sub-diagram of $\mcD$ removing $B_k$.  Note that $\mcD'$ also avoids $q$-overlaps, so by induction, the permutation $\Lambda(\mcD')$ avoids $\Lambda(\mcOv^{\pm}_q)$.  Suppose $B_k$ is maximal in $\mcD$ and let $q':=|B_{k-1}\cap B_k|$ denote the overlap between $B_{k-1}$ and $B_k$.  In \Cref{fig:Overlaps_1}, we display the matrices of $\Lambda(\mcD')$ and $\Lambda(\mcD)$ following \Cref{fig:matrix_of_sub_D} noting that region $A_2$ has $q'+1$ rows. Since $\Lambda(\mcD')$ avoids $\Lambda(\mcOv^{+}_q)$, any containment of $\Lambda(\mcOv^{+}_q)$ in $\Lambda(\mcD)$ must involve region $A_3$.  Furthermore, region $A_3$ is decreasing and hence, any containment of $\Lambda(\mcOv^{+}_q)$ in $\Lambda(\mcD)$ must also use $q+1$ entries from region $A_2$.  If $\Lambda(\mcD)$ contains $\Lambda(\mcOv^{+}_q)$, then $q\leq q'$.  But $\mcD$ avoids $q$-overlaps by assumption and therefore $q'<q$.  This implies $\Lambda(\mcD)$ avoids $\Lambda(\mcOv^{+}_q)$.  

\begin{figure}[h!]
\begin{tikzpicture}[scale=0.35,baseline={(0,-0.75)}]
\draw (-3,3)node {$\Lambda(\mcD')=$};
\draw (0,0) rectangle (7,7);
\draw (0,4) -- (7,4);
\draw [decorate, decoration={brace, mirror, amplitude=6pt},thick] (7.5,0.1) -- (7.5,3.9);
\draw (9.75,2.25)node {$q'+1$};
\draw (3.5,5.5)node {$A_1$};
\draw (3.5,2)node {$A_2$};
\end{tikzpicture}\ \quad \begin{tikzpicture}[scale=0.4]
\draw (0,0) rectangle (10,10);
\draw[fill=black!20] (7,7) rectangle (10,10);
\draw[fill=black!20] (0,4) rectangle (7,7);
\draw[fill=black!20] (7,0) rectangle (10,4);
\draw[fill=blue!20] (7,4) rectangle (10,7);
\draw [decorate, decoration={brace, mirror, amplitude=6pt},thick] (10.5,0.1) -- (10.5,3.9);
\draw (12.75,2.25)node {$q'+1$};
\draw (6,0) -- (6,10);
\fill (9.5,6.5) circle (7pt);
\draw[thick, red]  (9.5,6.5) circle (11pt);
\fill (6.5,8.5) circle (7pt);
\draw[thick, red]  (6.5,8.5) circle (11pt);
\fill (2.5,3.5) circle (7pt);
\fill (0.5,1.5) circle (7pt);
\draw[loosely dotted,thick,line width = 0.4mm] (0.5,1.5) -- (2.5,3.5);
\draw[thick, red]  (0.5,1.5) circle (11pt);
\draw[thick, red]  (2.5,3.5) circle (11pt);
\draw (3.5,8.5)node {$A_1$};
\draw (3.5,2)node {$A_2$};
\draw (8.5,5.5)node {$A_3$};
\draw (-3,5)node {$\Lambda(\mcD)=$};
\end{tikzpicture}
\caption{If $\Lambda(\mcD)$ contains $\Lambda(\mcOv^{+}_q)$, then $q\leq q'$.}
\label{fig:Overlaps_1}
\end{figure}

Next we show that $\Lambda(\mcD)$ avoids $\Lambda(\mcOv^{-}_q)$.  Continuing with the assumption that $B_k$ is maximal in $\mcD$, we have that the region $A_2$ is decreasing by \Cref{lem:regions_A_2_4_decreasing} in \Cref{fig:Overlaps_1}.  Since both regions $A_2$ and $A_3$ are decreasing, any containment of $\Lambda(\mcOv^{-}_q)$ in $\Lambda(\mcD)$ must be within region $A_1$.  But $\Lambda(\mcD')$ avoids $\Lambda(\mcOv^{-}_q)$ and therefore, so does $\Lambda(\mcD)$.

Finally, if $B_k$ is minimal, we can apply the argument above to the dual staircase diagram $\mcD^*$ and use \Cref{prop:staircase_permutation_properties} part (4) noting that diagrams $\mcOv^{\pm}_q$ are dual to each other.  This completes the proof.
\end{proof}

\subsection{Strongly connected chains}
A staircase diagram is an \textbf{increasing chain of length $\ell$} if it has the form $$\{B_1\prec\cdots\prec B_{\ell}\}.$$  Similarly, we have \textbf{decreasing chains of length $\ell$} of the form $\{B_1\succ\cdots\succ B_{\ell}\}$.  

We call a staircase diagram $\mcD=\{B_1,\ldots, B_k\}$ \textbf{strongly connected} if $B_i\cap B_{i+1}\neq \varnothing$ for all $1\leq i< k$.  If the support $S(\mcD)=[a,b]$, then \Cref{prop:staircase_wk_decomp} and \Cref{prop:staircase_wk_decomp2} imply that $\mcD$ is strongly connected if and only if $\Lambda(\mcD)$ is strongly indecomposable in $\mfS_{b-a+2}$.

\begin{definition}
We say a staircase diagram $\mcD=\{B_1,\ldots,B_k\}$ \textbf{contains a strongly connected $\ell$-chain} if there exists a subdiagram $\{B_{m+1},\ldots,B_{m+\ell}\}$ that is a strongly connected chain of length $\ell$ (either increasing or decreasing).  Otherwise, we say $\mcD$ \textbf{avoids strongly connected $\ell$-chains.}
\end{definition}

There are two strongly connected $\ell$-chains of minimal support size given by consecutive blocks of the form $B_i=[i,i+1]$ arranged in either increasing or decreasing order.  We denote these staircase diagrams by $\mcCh^+_\ell$ and $\mcCh^-_\ell$ respectively.  

$$\mcCh^+_\ell = \begin{tikzpicture}[scale=0.4,baseline={(0,0.8)}]
            \ppsansnumbers{6}{{0,0,0,1,1},{0,0,2,2,0},{0,3,3,0,0},{4,4,0,0,0}}
            \draw [decorate, decoration={brace, mirror, amplitude=6pt},thick] (1.5,0) -- (1.5,4);
            \draw (4,2)node {$\ell$ blocks};
    \end{tikzpicture}\quad\text{and}\quad
\mcCh^-_\ell =\begin{tikzpicture}[scale=0.4,baseline={(0,0.8)}]
            \ppsansnumbers{6}{{4,4,0,0,0},{0,3,3,0,0},{0,0,2,2,0},{0,0,0,1,1}}
            \draw [decorate, decoration={brace, mirror, amplitude=6pt},thick] (1.5,0) -- (1.5,4);
            \draw (4,2)node {$\ell$ blocks};
    \end{tikzpicture}$$

Note that the diagrams $\mcCh^{\pm}_\ell$ have support $[\ell+1]$ and hence $\Lambda(\mcCh^{\pm}_\ell)\in \mfS_{\ell+2}$.  Specifically, we will show in \Cref{lem:chain_containment} below that
$$\Lambda(\mcCh^{+}_\ell)=(\ell+2)(\ell+1)12\cdots \ell\quad\text{and}\quad \Lambda(\mcCh^{-}_\ell)=34\cdots(\ell+1)(\ell+2) 21.$$
$$\begin{tikzpicture}[scale=0.4]
\draw[step=1.0,black] (0,0) grid (6,6);
\fill (0.5,0.5) circle (7pt);
\fill (1.5,1.5) circle (7pt);
\fill (2.5,5.5) circle (7pt);
\fill (3.5,4.5) circle (7pt);
\fill (5.5,2.5) circle (7pt);
\draw[loosely dotted,thick,line width = 0.4mm] (3.5,4.5) -- (5.5,2.5);
\end{tikzpicture}\hspace{1.5in}\begin{tikzpicture}[scale=0.4]
\draw[step=1.0,black] (0,0) grid (6,6);
\fill (0.5,3.5) circle (7pt);
\fill (1.5,2.5) circle (7pt);
\fill (3.5,0.5) circle (7pt);
\fill (4.5,4.5) circle (7pt);
\fill (5.5,5.5) circle (7pt);
\draw[loosely dotted,thick,line width = 0.4mm] (1.5,2.5) -- (3.5,0.5);
\end{tikzpicture}
$$

The goal of this subsection is to prove the following result relating strongly connected $\ell$-chains and permutation pattern avoidance.

\begin{theorem}\label{thm:strong_chains}
Let $\ell\geq 2$.  The following are equivalent:
\begin{enumerate}
    \item The staircase diagram $\mcD$ avoids strongly connected $\ell$-chains.
    \item The permutation $\Lambda(\mcD)$ avoids the patterns $\Lambda(\mcCh^{\pm}_\ell)$.
\end{enumerate}
\end{theorem}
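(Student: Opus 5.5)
We will mirror the proof of \Cref{thm:overlaps}. The first step is to establish the promised \Cref{lem:chain_containment}, which has two parts.

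\begin{enumerate}
\item We compute $\Lambda(\mcCh^{\pm}_\ell)$ explicitly by induction on $\ell$, using \Cref{eqn:B_k_max_block} and \Cref{eqn:B_k_min_block}. At each step the last block $[\ell,\ell+1]$ overlaps the previous one in a single box, so we insert a $1\times1$ anti-diagonal square. For $\mcCh^+_\ell$, appending on the right and bumping the large values produces $(\ell+2)(\ell+1)12\cdots\ell$. The case $\mcCh^-_\ell$ then follows by duality, via \Cref{prop:staircase_permutation_properties}(4); one checks that the two displayed permutations are inverse to each other.
\item We show that if $\mcD$ contains a strongly connected $\ell$-chain $\{B_{m+1},\ldots,B_{m+\ell}\}$, then $\Lambda(\mcD)$ contains $\Lambda(\mcCh^+_\ell)$ or $\Lambda(\mcCh^-_\ell)$. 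By \Cref{prop:subdiagram_containment}, $\Lambda(\mcD)$ contains the $\Lambda$ of that chain. It therefore suffices to show that the $\Lambda$ of any strongly connected increasing chain contains $\Lambda(\mcCh^+_\ell)$.
\end{enumerate}

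For part (2) we induct on $\ell$, deleting the last block as in \Cref{lem:containment_delete}. Let $\mcE$ be the chain and $\mcE'$ the chain with its last block removed. By induction, $\Lambda(\mcE')$ contains $(\ell+1)\ell12\cdots(\ell-1)$. Passing to $\Lambda(\mcE)$ inserts a new decreasing anti-diagonal column block on the right, placed between the regions $A_1$ and $A_2$. Because the chain is strongly connected, the overlap is nonempty, so $A_2$ has at least two rows.

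We need the occurrence to extend so that the new "$\ell+2$" and "$\ell+1$" come before a longer increasing run. It is cleaner to extend from the left: delete the first block, i.e. use the $B_1$-analogue of the lemmas. The first block $B_1$ is minimal in an increasing chain, so removing it corresponds to deleting a bottom anti-diagonal strip. Equivalently, we use the explicit formula by which appending a maximal block shifts the values in $[r,m]$ up and appends values in decreasing order. Tracking where the pattern entries lie relative to the overlap $[r,m-1]$ is the main obstacle. The choice we would try first is this: the two large entries of the pattern can be replaced by the last two entries of the new anti-diagonal block, or by one of them together with a shifted entry. Since the overlap is nonempty, at least one value of $A_2$ is shifted above the values of $A_1$, and this supplies the extra increasing element. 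If that bookkeeping fails, the fallback is to shrink each block to its minimal form, using the inverse of \Cref{lem:containment_expand} on the interior blocks so as to reduce directly to $\mcCh^+_\ell$. Strong connectivity guarantees that each block can be trimmed to size two while overlaps stay nonempty.

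For the converse, assume $\mcD$ avoids strongly connected $\ell$-chains and induct on $k$, with $B_k$ maximal; the minimal case follows by duality, since $\mcCh^\pm_\ell$ are dual. If $B_{k-1}\cap B_k=\varnothing$, then \Cref{prop:staircase_wk_decomp} gives a weakly decomposing factorization. The patterns $\Lambda(\mcCh^\pm_\ell)$ are strongly indecomposable, being $\Lambda$ of strongly connected diagrams, so \Cref{lem:wk_decomposable_patterns} finishes this case. Otherwise, use \Cref{fig:matrix_append_B_k}, in which $A_2$ and the new block $A_3$ are decreasing by \Cref{lem:regions_A_2_4_decreasing}.

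\begin{itemize}
\item A copy of $\Lambda(\mcCh^-_\ell)=34\cdots(\ell+2)21$ cannot use $A_3$ or $A_2$ in an essential way, so it would lie in $\Lambda(\mcD')$, contradicting the inductive hypothesis.
\item A copy of $\Lambda(\mcCh^+_\ell)$ using $A_3$ must use it only for its top entries $(\ell+2)(\ell+1)$, with the increasing tail $12\cdots\ell$ coming from $A_1\cup A_2$. Such a copy reflects a strongly connected increasing chain of length $\ell-1$ ending at $B_{k-1}$ in $\mcD'$, so we strengthen the induction hypothesis accordingly: $\Lambda(\mcD')$ contains the prefix-type pattern iff $\mcD'$ has a strongly connected increasing $(\ell-1)$-chain ending at its last block. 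Combining this with $B_{k-1}\prec B_k$ yields an $\ell$-chain in $\mcD$, a contradiction.
\end{itemize}

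Formulating and proving this strengthened statement is the second delicate point.
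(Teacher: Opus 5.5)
Both directions have a genuine gap, and they share one positional error. When a maximal last block is appended, the new anti-diagonal block $A_3$ occupies the \emph{last} $n-m$ positions and is decreasing. So in a copy of $\Lambda(\mcCh^+_\ell)=(\ell+2)(\ell+1)12\cdots\ell$ it can supply at most the final entry $\ell$. It can never supply the two leading entries, as your ``first try'' proposes, and the extra increasing element comes from $A_3$, not from $A_2$. For that final entry to extend a copy of $\Lambda(\mcCh^+_{\ell-1})$ in $\Lambda(\mcD')$, the copy's two leading entries must lie in the shifted overlap rows $A_2$ and its whole tail in $A_1$. An arbitrary copy supplied by induction need not do this. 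You correctly observe that strong connectivity gives $A_2$ at least two rows; the missing idea is to carry a specific occurrence through the induction. Write $B_i=[r_i,l_i]$, take positions $1,2,l_1+1,\ldots,l_k+1$, and require $w(1)=n$, $w(2)=n-1$. Since $r_k\le l_{k-1}$, the two largest values of $\Lambda(\mcD')$ lie in $A_2$ and become $n,n-1$. Meanwhile \Cref{prop:staircase_permutation_properties}(2), $w(l_{k-1}+1)<w(l_{k-1}+2)$, forces the last tail entry into $A_1$, below all of $A_3$, so $w(l_k+1)$ extends the tail. Your fallback of trimming interior blocks to size two has no supporting lemma: \Cref{lem:containment_expand}, \Cref{lem:containment_delete} and their $B_1$-analogues only alter end blocks.

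In the converse, your $\Lambda(\mcCh^-_\ell)$ bullet is correct: $A_2$ and $A_3$ are both decreasing, so a copy misses $A_3$ and lies in $\Lambda(\mcD')$. The case $B_{k-1}\cap B_k=\varnothing$ is also fine. The $\Lambda(\mcCh^+_\ell)$ bullet repeats the error above: a copy meeting $A_3$ uses only its last entry there, with $(\ell+2)(\ell+1)$ in $A_2$ and $1\cdots(\ell-1)$ in $A_1$. What remains in $\Lambda(\mcD')$ is a copy of $\Lambda(\mcCh^+_{\ell-1})$ split by a row threshold, the left end of $B_{k-1}\cap B_k$, which is not intrinsic to $\mcD'$. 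The strengthened hypothesis you need (such a copy exists only if $\mcD'$ has a strongly connected increasing $(\ell-1)$-chain ending at its last block) is neither stated precisely nor proved. Establishing it requires its own induction through both maximal and minimal last blocks, which is exactly the hard part.

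The paper avoids location tracking altogether. It treats only $\Lambda(\mcCh^-_\ell)$ and dualizes. It splits a strongly connected $\mcD$ at its last maximal block $B_d$, so that $\mcD''=\{B_d,\ldots,B_k\}$ is a decreasing chain, and shows by \Cref{lem:chain_avoid_2} that any copy lies in $\Lambda(\mcD')$ or in $\Lambda(\mcD'')$. Inside a decreasing chain it uses \Cref{lem:chain_avoid_1} (a peeled minimal block can only supply the entry $\ell+2$) to lower $\ell$ by one per block. Hence a chain with fewer than $\ell$ blocks would force the decreasing permutation $u_{B_1}$ to contain some $\Lambda(\mcCh^-_j)$ with $j\ge 2$, which is impossible.
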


\begin{lemma}\label{lem:chain_containment}
Let $\mcD=\{B_1,\ldots,B_k\}$ be a strongly connected, increasing chain on $[n-1]$ where $B_i=[r_i,l_i]$ for each $1\leq i\leq k$.  If $\Lambda(\mcD)=w(1)\cdots w(n)$, then the subsequence $$w(1)w(2)w(l_1+1)w(l_2+1)\cdots w(l_k+1)$$ is a containment of pattern $(k+2)(k+1)12\cdots k$ in $\Lambda(\mcD)$.  In particular, $$\Lambda(\mcCh_k^+)=(k+2)(k+1)12\cdots k.$$ 

Furthermore, by duality (i.e. \Cref{prop:staircase_permutation_properties} part (4)), if $\mcD=\{B_1,\ldots,B_k\}$ is a strongly connected, decreasing chain, then $\Lambda(\mcD)$ contains 
$$\Lambda(\mcCh_k^-)=\Lambda(\mcCh_k^+)^{-1}=34\cdots(k+1)(k+2) 21.$$
\end{lemma}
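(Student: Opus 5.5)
I will prove the increasing-chain statement by induction on $k$, building $\Lambda(\mcD)$ one block at a time with \Cref{eqn:B_k_max_block}. In an increasing chain $B_1\prec\cdots\prec B_k$ the last block $B_k$ is maximal. Removing it leaves $\mcD'=\{B_1,\ldots,B_{k-1}\}$, again a strongly connected increasing chain, on the smaller support $[r_1,l_{k-1}]$. (The statement writes $B_i=[r_i,l_i]$, so $r_i$ is the left endpoint and $l_i$ the right.) Translating the support to start at $1$, as in the convention of \Cref{sec:sd_patterns}, I may assume $r_1=1$.

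\textbf{Base case $k=1$.} Here $\Lambda(\mcD)=u_{[1,l_1]}$ is the longest element, so $w(1)=l_1+1$, $w(2)=l_1$ and $w(l_1+1)=1$. The subsequence $w(1)w(2)w(l_1+1)$ is therefore $321=(3)(2)1$, as required.

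\textbf{Inductive step.} Apply \Cref{eqn:B_k_max_block} with $r=r_k$, $m=l_{k-1}+1$ and $n=l_k+1$. Strong connectivity gives $r_k\le l_{k-1}$, so $r\le m-1$. In passing from $w'$ to $w$:
\begin{itemize}
\item positions $1,\ldots,m$ keep their relative order among themselves;
\item values $w'(i)<r$ are unchanged;
\item values $r\le w'(i)\le m$ are shifted up by $n-m$;
\item the new positions $m<i\le n$ receive the values $n+m-i$, namely $m+1,\ldots,n-1$ in decreasing order, with the smallest value $m$ at position $n$.
\end{itemize}
By induction, the subsequence $w'(1)w'(2)w'(l_1+1)\cdots w'(l_{k-1}+1)$ realizes $(k+1)k12\cdots(k-1)$. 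Its first $k+1$ entries sit at positions $\le l_{k-1}+1=m$, so they keep their pattern in $w$.

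It remains to check that $w(l_k+1)=w(n)=m$ is larger than $w(l_1+1),\ldots,w(l_{k-1}+1)$ and smaller than $w(1)$ and $w(2)$.
\begin{itemize}
\item \emph{Larger than the $w(l_i+1)$.} Since $m$ is a newly inserted value, every old value was either kept below $r<m$ or shifted to at least $r+n-m>m$. So I only need the old values $w'(l_i+1)$, for $i\le k-1$, to be $<r$, i.e.\ unshifted.
\item \emph{Smaller than $w(1),w(2)$.} Since $w(1),w(2)$ are old values, this means they must have been shifted, i.e.\ $w'(1),w'(2)\ge r$.
\end{itemize}

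This value comparison is the main obstacle. I will strengthen the induction hypothesis to track it. After step $k-1$, the value at position $l_{k-1}+1$ is the minimum value in the new anti-diagonal, namely $l_{k-2}+1$. The earlier $w(l_i+1)$ are smaller still, since they were not shifted at earlier steps. So I expect an additional invariant: $w(l_i+1)\le l_{i-1}+1$ (with $l_0=0$, giving $w(l_1+1)=1$). With this, I need $l_{k-2}+1<r_k$, i.e.\ $l_{k-2}<r_k$, which holds because $B_{k-2}\cup B_k$ is disconnected. That follows from the staircase axioms: in a chain, $B_{k-2}$ and $B_k$ are not adjacent, and condition (2) together with $l_{k-2}<l_{k-1}$ handles the degenerate overlaps.

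For $w(1),w(2)$, I will add a second invariant: after every step, $w(1)$ and $w(2)$ are the two largest values. This holds at the base, where $w(1)=l_1+1$ and $w(2)=l_1$ are the two largest. Each step keeps them largest because values $\ge r$ are pushed up to the top block of values. Since the top two values are $\ge r$ whenever $r\le m-1$, they are indeed shifted at each step. Both invariants are checked directly from \Cref{eqn:B_k_max_block}.

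For $\mcCh_k^+$ the support has size $k+1$, so $n=k+2$ and the subsequence uses all $k+2$ positions. This gives the formula $\Lambda(\mcCh_k^+)=(k+2)(k+1)12\cdots k$ exactly.

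\textbf{Decreasing case.} The dual $\mcD^*$ of a decreasing chain is increasing, and \Cref{prop:staircase_permutation_properties}(4) gives $\Lambda(\mcD)=\Lambda(\mcD^*)^{-1}$. Inverting a containment of $(k+2)(k+1)12\cdots k$ yields a containment of its inverse $34\cdots(k+2)21$, and $(\mcCh_k^+)^*=\mcCh_k^-$.
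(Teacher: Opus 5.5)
\textbf{There is a genuine gap in the key comparison $w(l_{k-1}+1)<w(l_k+1)$.} Your overall plan matches the paper's: induct by deleting the maximal block $B_k$, keep $w(1),w(2)$ as the two largest values, and compare the new last entry with the old entries $w(l_i+1)$. Your second invariant and the duality step are fine. The failure is in the first invariant and in the fact you use to close it.

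In an increasing chain nothing forces $B_{k-2}\cup B_k$ to be disconnected. Condition (2) of \Cref{def:staircase} only constrains peaks and valleys, not chains. In $\mcCh_k^+$ itself, $B_{k-2}=[k-2,k-1]$ and $B_k=[k,k+1]$ abut, so $l_{k-2}+1=k=r_k$. Hence the inequality $l_{k-2}+1<r_k$ you need fails. Your rewriting ``i.e.\ $l_{k-2}<r_k$'' is also not an equivalent inequality. Chains such as $\{[1,3]\prec[2,4]\prec[3,5]\}$ are allowed by the definition even with $B_1\cap B_3\neq\varnothing$. So your bound $w(l_{k-1}+1)\le l_{k-2}+1$, although true, is too weak to give $w(l_{k-1}+1)<w(l_k+1)$. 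Concretely, for $\Lambda(\mcCh_3^+)=54123$ it only gives $w(4)\le 3$, while you need $w(4)<r_3=3$.

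\textbf{Where it comes from.} You have the wrong values on the inserted anti-diagonal. The displayed formula \eqref{eqn:B_k_max_block} contains misprints. Computing $u_{B_k}u_{J(B_k)}$ directly, or checking against $\Lambda(\mcD)=43875216$ in \Cref{ex:staircase_containment}, shows two things. Old values in $[r,m]$ move up by $n-m$. Position $i\in(m,n]$ receives the value $r+n-i$, not $n+m-i$. So the new block occupies the values $r,\ldots,r+n-m-1$, and $w(n)=r$, not $m$. Your version, with new values $m,\ldots,n-1$ alongside shifted old values $[r+n-m,n]$, is not even a permutation.

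\textbf{How to repair it.} Your two-bullet reduction remains correct with $w(n)=r$. What must change is the invariant: use the exact value $w(l_i+1)=r_i$. This value is inserted at step $i$ and never shifted later, because $r_i<r_j$ for $j>i$. Then $w(l_1+1)<\cdots<w(l_k+1)$ follows from the strictly increasing left endpoints, and no disconnectedness is needed.

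Alternatively, follow the paper. \Cref{prop:staircase_permutation_properties}(2) gives $w(l_{k-1}+1)<w(l_{k-1}+2)$. Since $w(l_{k-1}+2)$ is a new value, and every old value is either below $r$ or above all new values, $w(l_{k-1}+1)$ is unshifted and hence less than $r=w(l_k+1)$. Because the update is order-preserving on old values, this also handles $i<k-1$.
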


\begin{proof}
We prove the lemma by induction on the number of blocks $|\mcD|=k$.  First if $k=1$, then $\mcD=\{[1,n-1]\}$ and $\Lambda(\mcD)=u_{[1,n-1]}=n(n-1)\cdots 1$, which contains 321 using any subsequence of length 3.  In particular, the subsequence $w(1)w(2)w(n)=n(n-1)1$ is a containment of 321. 

Now suppose that $\mcD=\{B_1,\ldots,B_k\}$ is a strongly connected increasing chain on $[n-1]$ and let $\mcD'=\mcD\setminus\{B_k\}$ (in particular, $B_k$ is maximal). In \Cref{fig:strongchains_contain_1}, we show $\Lambda(\mcD')$ dividing into regions $A_1$ and $A_2$ in its containment in $\Lambda(\mcD)$.

\begin{figure}[h!]
\begin{tikzpicture}[scale=0.35,baseline={(0,-1.75)}]
\draw (-3,3)node {$\Lambda(\mcD')=$};
\draw (0,0) rectangle (7,7);
\draw (0,4) -- (7,4);
\draw [decorate, decoration={brace, mirror, amplitude=6pt},thick] (7.5,4.1) -- (7.5,6.9);
\draw (9.75,5.5)node {$r_k-1$};
\draw [decorate, decoration={brace, mirror, amplitude=6pt},thick] (0.1,-0.5) -- (6.9,-0.5);
\draw (3.5,-1.5)node {$l_{k-1}+1$};
\draw (3.5,5.5)node {$A_1$};
\draw (3.5,2)node {$A_2$};
\end{tikzpicture}\ \begin{tikzpicture}[scale=0.4]
\draw (0,0) rectangle (10,10);
\draw[fill=black!20] (7,7) rectangle (10,10);
\draw[fill=black!20] (0,4) rectangle (7,7);
\draw[fill=black!20] (7,0) rectangle (10,4);
\draw[fill=blue!20] (7,4) rectangle (10,7);
\draw[step=1.0,black] (7,4) grid (10,7);
\fill (7.5,4.5) circle (7pt);
\fill (9.5,6.5) circle (7pt);
\draw[loosely dotted,thick,line width = 0.5mm] (7.5,4.5) -- (9.5,6.5);
\draw [decorate, decoration={brace, mirror, amplitude=6pt},thick] (10.5,7.1) -- (10.5,9.9);
\draw (12.75,8.5)node {$r_k-1$};
\draw [decorate, decoration={brace, mirror, amplitude=6pt},thick] (10.5,4.1) -- (10.5,6.9);
\draw (13.25,5.5)node {$l_k-l_{k-1}$};
\draw [decorate, decoration={brace, mirror, amplitude=6pt},thick] (10.5,0.1) -- (10.5,3.9);
\draw (14.25,2)node {$l_{k-1}-r_{k}+2$};
\draw [decorate, decoration={brace, mirror, amplitude=6pt},thick] (0.1,-0.5) -- (6.9,-0.5);
\draw (3.5,-1.5)node {$l_{k-1}+1$};
\draw [decorate, decoration={brace, mirror, amplitude=6pt},thick] (7.1,-0.5) -- (9.9,-0.5);
\draw (9,-1.5)node {$l_k-l_{k-1}$};
\draw (3.5,8.5)node {$A_1$};
\draw (3.5,2)node {$A_2$};
\draw (-3,5)node {$\Lambda(\mcD)=$};
\end{tikzpicture}
\caption{$\Lambda(\mcD')$ contained in $\Lambda(\mcD)$ when $\mcD$ is an increasing chain.}
\label{fig:strongchains_contain_1}
\end{figure}


Let $\Lambda(\mcD')=w'(1)\cdots w'(l_{k-1}+1)$ and $\Lambda(\mcD)=w(1)\cdots w(n)$.  By induction, the permutation $\Lambda(\mcD')$ contains the pattern $(k+1)k12\cdots (k-1)$ along the subsequence $$w'(1)w'(2)w'(l_1+1)w'(l_2+1)\cdots w'(l_{k-1}+1).$$
We further assume by induction that $w'(1)=l_{k-1}+1$ and $w'(2)=l_{k-1}.$  Since $\mcD$ is strongly connected, $r_k\leq l_{k-1}$ and hence the nodes $(1,w(1)), (2,w(2))$ belong to region $A_2$.  This implies $w(1)=l_k+1=n$ and $w(2)=l_{k}=n-1$. 
 \Cref{prop:staircase_permutation_properties} part (2) implies $w(l_{k-1}+1)<w(l_{k-1}+2)$, which, in turn, implies that $w(l_{k-1}+1)<w(l_{k}+1).$
 Thus $\Lambda(\mcD)$ contains the pattern $(k+2)(k+1)12\cdots k$ at $$w(1)w(2)w(l_1+1)w(l_2+1)\cdots w(l_k+1)$$ with $w(1)=n$ and $w(2)=n-1$.  This completes the proof.
\end{proof}

\begin{example}
Consider the staircase diagram $\mcD=\{[1,4]\prec [3,6]\prec[6,7,8]\}$.
$$
\begin{tikzpicture}[scale=0.44,baseline={(0,-0.4)}]
    \ppAff{9}{{0,0, 0,0,1,1,1,1},{0,0,3,3,3,3,0},{4,4, 4,0, 0,0,0,0}}
\end{tikzpicture}\hspace{1.0in} \begin{tikzpicture}[scale=0.3,baseline={(0,0.2)}]
\draw[fill=blue!20] (7,4) rectangle (9,6);
\draw[fill=black!20] (0,4) rectangle (7,6);
\draw[fill=black!20] (7,0) rectangle (9,4);
\draw[fill=black!20] (7,6) rectangle (9,9);
\draw[step=1.0,black] (0,0) grid (9,9);
\fill (0.5,0.5) circle (7pt);\draw[thick, red]  (0.5,0.5) circle (11pt);
\fill (1.5,1.5) circle (7pt);\draw[thick, red]   (1.5,1.5) circle (11pt);
\fill (2.5,2.5) circle (7pt);
\fill (3.5,7.5) circle (7pt);
\fill (4.5,8.5) circle (7pt);\draw[thick, red]  (4.5,8.5) circle (11pt);
\fill (5.5,3.5) circle (7pt);
\fill (6.5,6.5) circle (7pt);\draw[thick, red]  (6.5,6.5) circle (11pt);
\fill (7.5,4.5) circle (7pt);
\fill (8.5,5.5) circle (7pt);\draw[thick, red]  (8.5,5.5) circle (11pt);
\end{tikzpicture}
$$
Note that $\Lambda(\mcD)=987216354$ contains $\Lambda(\mcCh_3^+)=54123$ at position sequence $(1,2,5,7,9)$.  
\end{example}

\begin{lemma}\label{lem:chain_avoid_1}
Let $k\ge 2$ and $\mcD=\{B_1,\ldots,B_k\}$ be a strongly connected staircase diagram such that $B_k$ is minimal.  Let $\mcD'=\mcD\setminus\{B_k\}$ and $\ell\geq 2$.  If $\Lambda(\mcD)$ contains $\Lambda(\mcCh^{-}_\ell)$, then $\Lambda(\mcD')$ contains $\Lambda(\mcCh^{-}_{\ell-1})$.
\end{lemma}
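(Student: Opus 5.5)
The plan is to compare $\Lambda(\mcD)$ with $\Lambda(\mcD')$ through the explicit insertion description \eqref{eqn:B_k_min_block} and the right-hand picture in \Cref{fig:matrix_append_B_k}. Since $B_k$ is minimal, $\Lambda(\mcD)$ is obtained from $\Lambda(\mcD')$ by inserting an anti-diagonal square into consecutive positions $r\le i\le r+n-m$, between the column regions $A_3$ and $A_4$. Write $\mathcal{N}$ for the set of entries of $\Lambda(\mcD)$ coming from this square.

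Two facts about $\mathcal{N}$ drive the argument. First, $\mathcal{N}$ is a decreasing run in consecutive positions. Second, from \eqref{eqn:B_k_min_block}, its values $n+m-i$ lie at the top of the value range, above all the remaining entries; I would check this carefully against the displayed formula. Because $B_{k-1}\cap B_k=[r,m-1]$ is nonempty by strong connectivity, the square genuinely separates $A_3$ from $A_4$ and no degenerate boundary case arises. Deleting the entries of $\mathcal{N}$ and standardizing gives back $\Lambda(\mcD')$: the other entries keep their relative positions, and their relative values are unchanged.

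Now fix an occurrence of $\Lambda(\mcCh^-_\ell)=34\cdots(\ell+2)21$ in $\Lambda(\mcD)$. There are two cases.

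\emph{Case 1: the occurrence uses no entry of $\mathcal{N}$.} Then it is an occurrence in $\Lambda(\mcD')$. Deleting its first letter $3$ leaves a copy of $34\cdots(\ell+1)21$ after standardization, which is $\Lambda(\mcCh^-_{\ell-1})$.

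\emph{Case 2: the occurrence uses some entry of $\mathcal{N}$.} Since $\mathcal{N}$ is decreasing, it can supply at most two letters, and any two it supplies must form a descent in the pattern. The only descents are $(\ell+2,2)$, $(\ell+2,1)$ and $(2,1)$. The letters $2,1$ lie below the whole increasing run $3,\dots,\ell+2$, while $\mathcal{N}$ sits above every entry outside it. Hence $\mathcal{N}$ cannot contribute $2$ or $1$ whenever the pattern also uses an entry outside $\mathcal{N}$, which it does because $\ell\ge2$. So $\mathcal{N}$ contributes exactly one letter, and it is the maximum, $\ell+2$. Deleting that letter leaves an occurrence of $34\cdots(\ell+1)21=\Lambda(\mcCh^-_{\ell-1})$ lying entirely outside $\mathcal{N}$, hence in $\Lambda(\mcD')$.

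The main obstacle is the value bookkeeping in \eqref{eqn:B_k_min_block}, namely confirming that the inserted square really occupies the top values rather than straddling values of $\Lambda(\mcD')$. If it does straddle, I would fall back on the decreasing region $A_4$ from \Cref{lem:regions_A_2_4_decreasing}. Any letter of $\mathcal{N}$ playing the role of $2$ or $1$ has every letter of the increasing run to its left and above it. The plan would be to show that such run letters can be exchanged for entries of $A_3$, or else force $A_4$ to contain an increasing pair, which would contradict \Cref{lem:regions_A_2_4_decreasing}. Either outcome returns the argument to the situation of Case 2.
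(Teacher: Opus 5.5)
Your argument is correct and is essentially the paper's proof. The entries coming from $B_k$ form a decreasing run in consecutive positions that occupies exactly the largest values $m+1,\dots,n$. This follows from $\Lambda(\mcD)=\Lambda(\mcD')\cdot u_{J(B_k)}u_{B_k}$ with $\Lambda(\mcD')\in\mfS_m$. The displayed formula \eqref{eqn:B_k_min_block} has index typos, but this is what it should say, so your fallback paragraph is unnecessary. Hence any occurrence of $\Lambda(\mcCh^-_\ell)$ not already in $\Lambda(\mcD')$ takes exactly its letter $\ell+2$ from this run, and deleting that letter leaves $\Lambda(\mcCh^-_{\ell-1})$ inside $\Lambda(\mcD')$.

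One harmless slip: a decreasing set could a priori supply three letters, such as $(\ell+2),2,1$, and the pattern has more inversions than the three you list (e.g.\ $(3,2)$). Your value argument already rules this out, read as ``since $\ell\ge 2$, some letter of the increasing run $3,\dots,\ell+2$ lies outside $\mathcal{N}$, so neither $2$ nor $1$ can lie in $\mathcal{N}$''.
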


\begin{proof}
    Assume $S(\mcD)=[n-1]$ and let $B_k=[r,n-1]$.  Since $\mcD$ is strongly connected, we have $B_{k-1}\cap B_k=[r,m-1]$ for some $r< m<n-1$.  If $\Lambda(\mcD')$ contains $\Lambda(\mcCh^{-}_{\ell})$, then it also contains $\Lambda(\mcCh^{-}_{\ell-1})$.  Suppose $\Lambda(\mcD')$ avoids $\Lambda(\mcCh^{-}_{\ell})$.  Since $B_k$ is minimal, \Cref{eqn:B_k_min_block} together with the assumption $\Lambda(\mcD)$ contains $\Lambda(\mcCh^{-}_\ell)$ implies that every such containment must involve positions in $\Lambda(\mcD)$ from the interval $[r,r+n-m]$. Specifically, the containment must use exactly one position from $[m,m+n-r]$ for the value $\ell+2$ from $\Lambda(\mcCh^{-}_{\ell})$ as in \Cref{fig:strong_chain1}.  Thus $\Lambda(\mcD')$ contains $\Lambda(\mcCh^{-}_{\ell-1})$.
 \begin{figure}[h!]
\begin{tikzpicture}[scale=0.35,baseline={(0,-1.5)}]
\draw[fill=black!20] (0,0) rectangle (3,1);
\draw[fill=black!20] (4,0) rectangle (6,1);
\draw[fill=black!20] (3,1) rectangle (4,6);
\draw[step=1.0,black] (0,0) grid (6,6);
\fill (0.5,3.5) circle (7pt);
\fill (2.5,1.5) circle (7pt);
\fill (3.5,0.5) circle (7pt);\draw[thick, red]  (3.5,0.5) circle (11pt);
\fill (4.5,4.5) circle (7pt);
\fill (5.5,5.5) circle (7pt);
\draw[loosely dotted,thick,line width = 0.5mm] (0.5,3.5) -- (2.5,1.5);
\draw [decorate, decoration={brace, mirror, amplitude=6pt},thick] (6.5,0.1) -- (6.5,5.9);
\draw (8.75,3)node {$\ell+2$};
\draw (-3,3)node {$\Lambda(\mcCh^{-}_{\ell})=$};
\end{tikzpicture}\hspace{0.25in}
\begin{tikzpicture}[scale=0.4]
\draw (0,0) rectangle (10,10);
\draw[step=1.0,black] (4,0) grid (7,3);
\draw[fill=black!20] (0,0) rectangle (4,3);
\draw[fill=black!20] (7,0) rectangle (10,3);
\draw[fill=black!20] (4,3) rectangle (7,10);
\draw[fill=red!20] (4,0) rectangle (7,3);
\fill (4.5,0.5) circle (7pt);
\fill (6.5,2.5) circle (7pt);
\draw[loosely dotted,thick,line width = 0.4mm] (4.5,0.5) -- (6.5,2.5);
\draw[thick, red]  (4.5,0.5) circle (11pt);
\draw [decorate, decoration={brace, mirror, amplitude=6pt},thick] (10.5,3.1) -- (10.5,9.9);
\draw (11.75,6.5)node {$m$};
\draw [decorate, decoration={brace, mirror, amplitude=6pt},thick] (10.5,0.1) -- (10.5,2.9);
\draw (12.5,1.5)node {$n-m$};
\draw [decorate, decoration={brace, mirror, amplitude=6pt},thick] (0.1,-0.5) -- (3.9,-0.5);
\draw (2,-1.5)node {$r-1$};
\draw [decorate, decoration={brace, mirror, amplitude=6pt},thick] (4.1,-0.5) -- (6.9,-0.5);
\draw (5.5,-1.5)node {$n-m$};
\draw (2,6.5)node {$A_3$};
\draw (8.5,6.5)node {$A_4$};
\draw (-3,5)node {$\Lambda(\mcD)=$};
\end{tikzpicture}
\caption{The pattern $\Lambda(\mcCh^{-}_{\ell})$ contained in $\Lambda(\mcD)$.}
\label{fig:strong_chain1}
\end{figure}
\end{proof}

\begin{lemma}\label{lem:chain_avoid_2}
Let $\mcD=\{B_1,\ldots,B_k\}$ be a strongly connected staircase diagram and let $1< d\leq k$ and $\ell\geq 1$.  Define the subdiagrams 
$$\mcD'=\{B_1,\ldots, B_{d-1}\}\quad\text{and}\quad  \mcD''=\{B_d,\ldots,B_k\}.$$
Suppose that $B_d$ is maximal in $\mcD$ and that $\mcD''$ is a decreasing chain.  If $\Lambda(\mcD)$ contains $\Lambda(\mcCh^{-}_\ell)$, then either $\Lambda(\mcD')$ contains $\Lambda(\mcCh^{-}_{\ell})$ or  $\Lambda(\mcD'')$ contains $\Lambda(\mcCh^{-}_{\ell})$.
\end{lemma}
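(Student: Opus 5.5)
We will prove the lemma by induction on the number $k-d+1$ of blocks in the decreasing chain $\mcD''$. The key structural fact is that $B_d$ is maximal, so
\[\Lambda(\mcD)=u_{B_d}\cdot u_{J(B_d)}\cdot\Lambda(\mcD\setminus\{B_d\}).\]
Since $\mcD''$ is a decreasing chain starting at $B_d$, we have $J(B_d)=B_d\cap(B_{d-1}\cup B_{d+1})$. The factor $u_{B_d}u_{J(B_d)}$ acts on values, as in \Cref{eqn:B_k_max_block}. We will therefore split the matrix of $\Lambda(\mcD)$ into horizontal bands of values. The band $V_-$ consists of values $<l_d$, coming from the overlap with $B_{d-1}$. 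The band $V_0$ consists of values in the interior of $B_d$ and is an anti-diagonal decreasing block. The band $V_+$ consists of values $>r_d+1$, coming from the overlap with $B_{d+1}$. By \Cref{prop:subdiagram_containment} and the proof of \Cref{lem:containment_delete}, the entries of $\Lambda(\mcD)$ lying in positions $\le r_{d-1}+1$ reproduce $\Lambda(\mcD')$ up to order-isomorphism. Similarly, the entries in positions $\ge l_d$ reproduce $\Lambda(\mcD'')$.

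Next, fix a containment of $\Lambda(\mcCh^-_\ell)=34\cdots(\ell+2)21$ in $\Lambda(\mcD)$ and analyze where its entries can lie. The entries playing the roles of ``$2$'' and ``$1$'' form a descent at the end, after an increasing run of $\ell$ larger values. The first step is to use \Cref{lem:regions_A_2_4_decreasing} and its $B_1$-analogue. Applied to the chain $\mcD''$, these show that the region of $\Lambda(\mcD'')$ to the left of the overlap $B_d\cap B_{d+1}$ is decreasing. Applied with $B_d$ maximal over $B_{d-1}$, they show that the region $A_2$ of $\Lambda(\mcD')$ is decreasing. Hence an increasing run of length $\ell\ge 2$ cannot sit in these decreasing regions. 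The run must therefore lie entirely in positions belonging to one side, except possibly for its last element. The second step handles the straddling case: if the pattern uses entries on both sides of the anti-diagonal block of $B_d$, we replace the entries in the block by a suitable entry of the overlap region, keeping the relative order. This replacement shows that the pattern already lives in $\Lambda(\mcD')$ or in $\Lambda(\mcD'')$.

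For the induction, if $\mcD''=\{B_d\}$ is a single block, then $\Lambda(\mcD'')$ is a longest element and contains no $\Lambda(\mcCh^-_\ell)$ with $\ell\ge1$ as an increasing run of length $\ell\ge 2$ would be required... (for $\ell=1$ the pattern is $321$, which $u_{B_d}$ contains when $|B_d|\geq 2$). In that case the argument reduces to showing the containment lives in $\mcD'$, using that $\Lambda(\mcD)=\Lambda(\mcD\setminus\{B_d\})$ with a cyclic shift, as in \Cref{lem:containment_expand}. In general, we peel off the last block $B_k$, which is minimal, and use \Cref{eqn:B_k_min_block} as in the proof of \Cref{lem:chain_avoid_1}. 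Either the containment avoids the inserted anti-diagonal block, and we apply the induction hypothesis to $\mcD\setminus\{B_k\}$; or it uses that block only for the entry ``$\ell+2$''. In the latter case the remaining entries sit in the $B_{k-1},B_k$ portion, which belongs to $\mcD''$ since $k-1\ge d$, or else $k-1=d-1$, which is impossible.

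The main obstacle is the straddling case in the second step. Here the increasing run $3,4,\dots$ begins among values coming from $\mcD'$ and continues into the part of the matrix created by $B_d$ and the chain below it. One must show that the shifted values in band $V_+$ cannot extend a run started in $A_1$ of $\mcD'$ while the final descent ``$21$'' is available. I would first try to use \Cref{prop:staircase_permutation_properties} part (3), which gives ascents at the left ends of the chain blocks, together with the decreasing regions above. These should show that any such mixed occurrence can be rerouted entirely into one side. If rerouting fails, I would instead track explicitly the positions $r_{d+1}-1,\dots,r_k-1$ as in \Cref{lem:chain_containment}.
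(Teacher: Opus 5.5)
There is a genuine gap, and it sits in the straddling configuration you defer as the ``main obstacle''. That configuration actually occurs, so it cannot be excluded. It also cannot be rerouted into the columns to the right of $\Lambda(\mcD')$.

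Put $m=r_{d-1}+1$. Use the regions of \Cref{fig:overlap_proof_2}: $A_1,A_2$ are the small- and large-value parts of the first $m$ columns, and $R=A_3'\cup A_3''\cup A_4$ is everything to the right. Suppose an occurrence $x_1<\cdots<x_\ell$, $y_2>y_1$ meets both sides. Then the value bands force $x_1\in A_2$, $x_2,\dots,x_\ell\in A_4$ and $y_2,y_1\in A_3''$. So it is the \emph{first} element of the run, not the last, that lies on the $\mcD'$ side.

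Here is a concrete case. For $\mcD=\{[1,2]\prec[2,4]\succ[4,5]\}$ one gets $\Lambda(\mcD)=541632$. Its only occurrences of $3421=\Lambda(\mcCh^-_2)$ use the values $5,6,3,2$ and $4,6,3,2$. Meanwhile $R$ carries only $6,3,2$ and $\Lambda(\mcD')=321$, so neither side contains the pattern by itself.

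The same example refutes two of your other steps:
\begin{itemize}
\item Your claim that positions $\ge l_d$ reproduce $\Lambda(\mcD'')$ fails: those positions give $41632$, which is not order-isomorphic to $\Lambda(\mcD'')=43521$.
\item Your induction step fails: the entry $6$ inserted by $B_k$ plays ``$\ell+2$'', but the entry playing ``$3$'' lies in a column of $\Lambda(\mcD')$. For $\{[1,2]\prec[2,3]\prec[3,5]\succ[5,6]\}$, whose permutation is $6512743$, that entry is at position $1$ or $2$, strictly left of $B_d$. So no reading of ``the $B_{k-1},B_k$ portion'' rescues the step.
\end{itemize}
Peeling $B_k$ only yields $\Lambda(\mcCh^-_{\ell-1})$ in $\Lambda(\mcD\setminus\{B_k\})$. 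That does not lift back to $\Lambda(\mcCh^-_\ell)$ in $\Lambda(\mcD')$ or $\Lambda(\mcD'')$.

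The missing idea is that $\Lambda(\mcD'')$ sits inside $\Lambda(\mcD)$ on the entries $A_2\cup R$, not on a set of positions. The reasons are as follows:
\begin{itemize}
\item $A_2$ is decreasing by \Cref{lem:regions_A_2_4_decreasing}.
\item $A_2$ has $|B_{d-1}\cap B_d|+1$ entries and lies entirely left of $R$.
\item The values of $A_2$ are larger than those on $B_d$'s anti-diagonal and smaller than everything inserted by $B_{d+1},\dots,B_k$.
\end{itemize}
This is exactly how columns $l_d,\dots,m$ of $u_{B_d}$ sit inside $\Lambda(\mcD'')$. Moreover, staircase condition (2) for $B_{d-1}\prec B_d\succ B_{d+1}$ gives $l_{d+1}\ge m+1$. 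Hence the later insertions act identically on $R$ in both permutations.

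Next, no occurrence can use an entry of $A_1$ together with an entry of $R$. The $A_1$ entry would have to be some $x_i$ followed by an $x_j\in R$. Then $y_2,y_1$ would lie in $R$ with values below an $A_1$ value, which is impossible. So every occurrence lies in $A_1\cup A_2\cong\Lambda(\mcD')$ or in $A_2\cup R\cong\Lambda(\mcD'')$, and no induction is needed.

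The paper's proof uses these same regions, but its dichotomy ``$A_1$ versus $A_3''\cup A_4$'' also overlooks this configuration; the example above has an occurrence in neither. The repair is the same: count $A_2$ on the $\mcD''$ side.
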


\begin{proof}
We first consider the case where $d=k$ and hence $\mcD''=\{B_d\}$.  Note that $\Lambda(\mcD'')=u_{B_d}$ which contains $\Lambda(\mcCh^{-}_\ell)$ if and only if $\ell=1$.  So we assume that $\ell\geq 2$.  Since $B_d$ is maximal, the matrix of $\Lambda(\mcD)$ has the following form as in \Cref{fig:overlap_proof_1}, where the submatrix $\Lambda(\mcD')$ is given by regions $A_1$ and $A_2$, and region $A_3$ is an anti-diagonal square.

\begin{figure}[h!]
\begin{tikzpicture}[scale=0.35]
\draw (0,0) rectangle (10,10);
\draw[fill=blue!20] (7,4) rectangle (10,7);
\draw[fill=black!20] (0,4) rectangle (7,7);
\draw[fill=black!20] (7,0) rectangle (10,4);
\draw[fill=black!20] (7,7) rectangle (10,10);
\draw (3.5,8.5)node {$A_1$};
\draw (3.5,2)node {$A_2$};
\draw (8.5,5.5)node {$A_3$};
\end{tikzpicture}
\caption{The matrix of $\Lambda(\mcD)$ with $B_d$ maximal.  The submatrix of $\Lambda(\mcD')$ is given by regions $A_1$ and $A_2$.}\label{fig:overlap_proof_1}
\end{figure}

 Lemma \ref{lem:regions_A_2_4_decreasing} says that region $A_2$ is decreasing.  If $\Lambda(\mcD)$ contains $\Lambda(\mcCh^{-}_\ell)$, then any such containment must happen within region $A_1$ since region $A_3$ is also decreasing.  Hence $\Lambda(\mcD')$ contains $\Lambda(\mcCh^{-}_\ell)$.  Now suppose that $d<k$ and hence $\mcD''$ is a decreasing chain with more than one block.  The matrix of $\Lambda(\mcD)$ is given in \Cref{fig:overlap_proof_2}.

\begin{figure}[h!]
\begin{tikzpicture}[scale=0.35]
\draw (1,-3) rectangle (14,10);
\draw[fill=blue!20] (12,5) rectangle (14,7);
\draw[fill=blue!20] (7,3) rectangle (9,5);
\draw[fill=black!20] (1,3) rectangle (7,7);
\draw[fill=black!20] (7,0) rectangle (14,3);
\draw[fill=black!20] (7,7) rectangle (14,10);
\draw[fill=red!20] (9,-3) rectangle (12,0);
\draw[fill=black!20] (9,0) rectangle (12,10);
\draw[fill=black!20] (1,-3) rectangle (14,0);
\draw[fill=red!20] (9,-3) rectangle (12,0);
\draw (4,8.5)node {$A_1$};
\draw (4,1.5)node {$A_2$};
\draw (8,4)node {$A_3'$};
\draw (13,6)node {$A_3''$};
\draw (13,6)node {$A_3''$};
\draw (10.5,-1.5)node {$A_4$};
\end{tikzpicture}
\caption{The matrix of $\Lambda(\mcD)$ with $d<k$ and $\{B_d,\ldots,B_k\}$ decreasing.}\label{fig:overlap_proof_2}
\end{figure}

Here, region $A_4$ corresponds to appending the sequence of blocks $B_{d+1}\succ\cdots\succ B_k$ to $\mcD'\cup \{B_d\}$.
The region $A_3$ (from block $B_d$) splits into two regions, $A_3'$ and $A_3''$.  Since $A_3$ is anti-diagonal, each of $A_3', A_3''$ are also anti-diagonal with $A_3''$ strictly above and to the right of $A_3'$.  Also note that region $A_4$ is strictly to the right of regions $A_1$ and $A_2$ since $B_d\succ\cdots\succ B_k$ is a decreasing sequence with $B_d$ maximal in $\mcD$.  

Now suppose that $\Lambda(\mcD)$ contains $\Lambda(\mcCh^{-}_\ell)$.  Then either $\Lambda(\mcD)$ is contained in region $A_1$ or contained in regions $A_4$ and $A_3''$ since regions $A_2$ and $A_3'$ are decreasing.  But $\Lambda(\mcD')$ contains region $A_1$ and $\Lambda(\mcD'')$ contains the region given by $A_4$ and $A_3''$ (in fact, $\Lambda(\mcD'')$ contains the region given by $A_3'$, $A_4$, and $A_3''$).  Hence, at least one of $\Lambda(\mcD'),\Lambda(\mcD'')$ contains $\Lambda(\mcCh^{-}_\ell)$.
\end{proof}

\begin{proof}[Proof of \Cref{thm:strong_chains}]
Let $\mcD=\{B_1,\ldots,B_k\}$ be a staircase diagram and suppose that $\mcD$ contains a strongly connected $\ell$-chain.  By \Cref{prop:subdiagram_containment} and \Cref{lem:chain_containment}, if the chain is increasing, then $\Lambda(\mcD)$ contains $\Lambda(\mcCh^+_\ell)$, and if the chain is decreasing, then $\Lambda(\mcD)$ contains $\Lambda(\mcCh^-_\ell)$.   

Conversely, suppose that $\Lambda(\mcD)$ contains $\Lambda(\mcCh^-_\ell)$.  We will show that $\mcD=\{B_1,\ldots,B_k\}$ contains a decreasing $\ell$-chain by induction on the number of decreasing runs in $\mcD$.   Since $\Lambda(\mcCh^{-}_\ell)$ is strongly indecomposable, it suffices to assume $\mcD$ is a strongly connected staircase diagram by \Cref{lem:wk_decomposable_patterns} and \Cref{prop:staircase_wk_decomp}.    First, suppose that $\mcD$ is a single decreasing chain.  If $k<\ell$, then repeated applications of \Cref{lem:chain_avoid_1} imply that $\Lambda(\{B_1\})$ contains $\Lambda(\mcCh^-_2)=4312$ which is a contradiction.  Hence $k\geq \ell$ and $\mcD$ contains a decreasing $\ell$-chain.  

If $\mcD$ is not a decreasing chain, then let $B_d$ denote the maximal block in $\mcD$ with largest index and consider the subdiagrams $\mcD':=\{B_1,\ldots,B_{d-1}\}$ and $\mcD'':=\{B_d,\ldots,B_k\}$.  In particular, $\mcD''$ is a decreasing chain.  Since $\Lambda(\mcD)$ contains $\Lambda(\mcCh^-_\ell)$, \Cref{lem:chain_avoid_2} implies at least one of $\Lambda(\mcD')$ or $\Lambda(\mcD'')$ contains $\Lambda(\mcCh^-_\ell)$.  If $\Lambda(\mcD'')$ contains $\Lambda(\mcCh^-_\ell)$, then $\mcD''$ must be a decreasing chain of at least $\ell$ blocks and hence $\mcD$ contains a decreasing $\ell$-chain.  Otherwise, $\Lambda(\mcD')$ contains $\Lambda(\mcCh^-_\ell)$ and, by induction, the subdiagram $\mcD'$ contains a decreasing $\ell$-chain.  In either case, $\mcD$ contains a decreasing $\ell$-chain.

Finally, if $\Lambda(\mcD)$ contains $\Lambda(\mcCh^+_\ell)$, then $\mcD$ contains an increasing $\ell$-chain by \Cref{prop:staircase_permutation_properties} part (4) and the preceding arguments.  This proves the theorem.  
\end{proof}

As a corollary of \Cref{thm:overlaps} and \Cref{thm:strong_chains}, we get the following characterization of polished permutations from \Cref{thm:GG_polished} in terms of staircase diagrams.  We remark this corollary can also be deduced from results in \cite{GG20}.
\begin{cor}
\label{cor:polished_staircases}
The following are equivalent:
\begin{enumerate}
    \item The staircase diagram $\mcD$ avoids strongly connected $3$-chains and $2$-overlaps.
    \item The permutation $\Lambda(\mcD)$ is polished.
\end{enumerate}    
\end{cor}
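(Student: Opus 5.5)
The plan is to combine \Cref{thm:GG_polished} with \Cref{thm:overlaps} (for $q=2$) and \Cref{thm:strong_chains} (for $\ell=3$). The only real work is to identify the four patterns of \Cref{thm:GG_polished} with the four permutations $\Lambda(\mcOv^{\pm}_2)$ and $\Lambda(\mcCh^{\pm}_3)$.

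First, record these permutations from the formulas already established. By \Cref{lem:overlaps_1} with $q=2$,
$$\Lambda(\mcOv^+_2)=54312\quad\text{and}\quad\Lambda(\mcOv^-_2)=45321.$$
By \Cref{lem:chain_containment} with $k=3$,
$$\Lambda(\mcCh^+_3)=54123\quad\text{and}\quad\Lambda(\mcCh^-_3)=34521.$$
These are exactly the four patterns $34521$, $54123$, $45321$, $54312$ appearing in \Cref{thm:GG_polished}. I would double-check the small cases directly from the general formulas $(q+3)(q+2)\cdots 4312$ and $(q+2)(q+3)(q+1)q\cdots 321$ to make sure there is no off-by-one in the indexing.

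Next, chain the equivalences. Let $\mcD$ be any staircase diagram. By \Cref{thm:smooth_bijection}, $\Lambda(\mcD)$ is smooth, so by \Cref{thm:GG_polished} it is polished if and only if it avoids the four patterns above. By \Cref{thm:overlaps}, $\Lambda(\mcD)$ avoids $\Lambda(\mcOv^{\pm}_2)$ if and only if $\mcD$ avoids $2$-overlaps. By \Cref{thm:strong_chains}, $\Lambda(\mcD)$ avoids $\Lambda(\mcCh^{\pm}_3)$ if and only if $\mcD$ avoids strongly connected $3$-chains. Conjoining these two equivalences gives (1) $\Leftrightarrow$ (2).

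There is no substantive obstacle, since the hypotheses $q=2\geq 1$ and $\ell=3\geq 2$ of the two theorems are satisfied. The only point needing care is the explicit pattern identification in the first step.
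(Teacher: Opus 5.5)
Your proposal is correct and follows the paper's proof: you apply \Cref{thm:overlaps} with $q=2$ and \Cref{thm:strong_chains} with $\ell=3$, identify $\Lambda(\mcOv^{+}_2)=54312$, $\Lambda(\mcOv^{-}_2)=45321$, $\Lambda(\mcCh^{+}_3)=54123$, $\Lambda(\mcCh^{-}_3)=34521$ as exactly the four patterns of \Cref{thm:GG_polished}, and use the smoothness of $\Lambda(\mcD)$ from \Cref{thm:smooth_bijection}. These identifications agree with a direct computation from \Cref{def:staircase_permutation}, so there is no off-by-one issue.
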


\begin{proof}
By \Cref{thm:strong_chains} and \Cref{thm:overlaps}, staircase diagrams avoiding $2$-overlaps and 
strongly connected $3$-chains correspond to smooth permutations avoiding 45321, 54312, 34521, 54123.
$$\begin{tikzpicture}[scale=0.4]
\draw[step=1.0,black] (0,0) grid (5,5);
\fill (0.5,1.5) circle (7pt);
\fill (1.5,0.5) circle (7pt);
\fill (2.5,2.5) circle (7pt);
\fill (3.5,3.5) circle (7pt);
\fill (4.5,4.5) circle (7pt);
\end{tikzpicture}\quad
\begin{tikzpicture}[scale=0.4]
\draw[step=1.0,black] (0,0) grid (5,5);
\fill (0.5,0.5) circle (7pt);
\fill (1.5,1.5) circle (7pt);
\fill (2.5,2.5) circle (7pt);
\fill (3.5,4.5) circle (7pt);
\fill (4.5,3.5) circle (7pt);
\end{tikzpicture}\qquad
\begin{tikzpicture}[scale=0.4]
\draw[step=1.0,black] (0,0) grid (5,5);
\fill (0.5,2.5) circle (7pt);
\fill (1.5,1.5) circle (7pt);
\fill (2.5,0.5) circle (7pt);
\fill (3.5,3.5) circle (7pt);
\fill (4.5,4.5) circle (7pt);
\end{tikzpicture}\quad
\begin{tikzpicture}[scale=0.4]
\draw[step=1.0,black] (0,0) grid (5,5);
\fill (0.5,0.5) circle (7pt);
\fill (1.5,1.5) circle (7pt);
\fill (2.5,4.5) circle (7pt);
\fill (3.5,3.5) circle (7pt);
\fill (4.5,2.5) circle (7pt);
\end{tikzpicture}$$
These are exactly the permutations patterns that appear in \Cref{thm:GG_polished}.
\end{proof}

\section{Generating functions and staircase diagrams}\label{sec:GF_calc}

In this section, we develop techniques to study the generating functions of smooth permutations which avoid strongly indecomposable permutation patterns.  We also define and study pattern avoidance of ``chain indecomposable" permutations.  We will see that these patterns behave nicely with respect to the strongly connected chain structure of staircase diagram.  As an application, we prove \Cref{thm:polished_GF} and \Cref{thm:semi-polished_GF12}.  


We say $\mcD$ is \textbf{fully supported} on $[n]$ if $S(\mcD)=[n]$.  Let $Q$ denote a set permutation patterns.  We say a staircase diagram \textbf{$\mcD$ avoids the set $Q$} if $\Lambda(\mcD)$ avoids every permutation pattern in $Q$.  Define the following coefficients (dependent on $Q$):
\begin{align*}
z_n&:=\#\{\text{$\mcD$ on $[n]$}\ |\ \text{$\mcD$ avoids $Q$}\}\\
z'_n&:=\#\{\text{$\mcD$ on $[n]$}\ |\ \text{$\mcD$ is fully supported and avoids $Q$}\}\\
\bar z_n&:=\#\{\text{$\mcD$ on $[n]$}\ |\ \text{$\mcD$ is fully supported, strongly connected, and avoids $Q$}\},
\end{align*}
and corresponding generating functions:
\[Z_Q(x):=\sum_{n\geq 0} z_n\, x^n,\qquad Z_Q'(x):=\sum_{n\geq 1} z'_n\, x^n,\qquad \overline{Z}_Q(x):=\sum_{n\geq 1} \bar z_n\, x^n\]
where we set $z_0:=1$.  The next proposition relates $Z(x), Z'(x)$, and $\overline{Z}(x)$.  For the proof, we follow similar arguments used in \cite{RS17} and \cite{RS18} to enumerate staircase diagrams (without pattern avoidance, so $Q=\varnothing$).


\begin{theorem}\label{thm:str_conn_2_all_SD}
Suppose that $Q$ is a set of strongly indecomposable permutations.  Then
$$Z_Q(x)=\frac{1-2\overline{Z}_Q(x)}{1-x+(x-2)\overline{Z}_Q(x)}.$$
\end{theorem}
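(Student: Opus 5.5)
The plan is to decompose an arbitrary staircase diagram on $[n]$ into strongly connected, fully supported pieces. The key steps are: show that avoiding $Q$ is a property of the pieces alone, and then translate the decomposition into a sequence (transfer) construction on generating functions.

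\emph{Step 1: reduction to pieces.} Let $\mcD=\{B_1,\ldots,B_k\}$ be a staircase diagram on $[n]$. Cut $\mcD$ at every index $i$ with $B_i\cap B_{i+1}=\varnothing$. This yields consecutive subdiagrams $\mcD_1,\ldots,\mcD_t$, each strongly connected and fully supported on an interval. At each cut one of three things happens: $B_i\cup B_{i+1}$ is disconnected (a gap of at least one unsupported integer), or $B_i\prec B_{i+1}$, or $B_i\succ B_{i+1}$. Iterating \Cref{prop:staircase_wk_decomp} writes $\Lambda(\mcD)$ as a product of the $\Lambda(\mcD_j)$ through decomposing or weakly decomposing factorizations. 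Since every pattern in $Q$ is strongly indecomposable, repeated use of \Cref{lem:wk_decomposable_patterns} (and \Cref{lem:decomposable}) shows that $\mcD$ avoids $Q$ if and only if every $\mcD_j$ avoids $Q$. Conversely, any choice of pieces, gaps, and cover directions at abutting cuts gives a valid staircase diagram: condition (2) of \Cref{def:staircase} only involves three consecutive blocks, and it is vacuous across an empty intersection. This is the point I expect to need the most care, since one must check that the poset obtained by gluing is still a staircase diagram and that the pieces are uniquely determined by $\mcD$. \Cref{thm:smooth_bijection} and \Cref{prop:staircase_wk_decomp2} supply the uniqueness.

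\emph{Step 2: fully supported diagrams.} A fully supported diagram avoiding $Q$ is a nonempty sequence of strongly connected $Q$-avoiding pieces whose supports abut. Each of the junctions independently carries one of the two relations $\prec$ or $\succ$. Hence
$$Z'_Q=\sum_{t\ge1}2^{t-1}\overline{Z}_Q^{\,t}=\frac{\overline{Z}_Q}{1-2\overline{Z}_Q}.$$

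\emph{Step 3: arbitrary diagrams.} A diagram on $[n]$ is a word in the positions $1,\ldots,n$. Each position is either unsupported (weight $x$) or lies in a maximal supported run, and each such run is a fully supported diagram. No two runs are adjacent. This is the usual sequence decomposition: an optional run followed by a sequence of blocks of the form "one unsupported position, then an optional run", giving
$$Z_Q=\frac{1+Z'_Q}{1-x(1+Z'_Q)}.$$
Substituting Step 2 and simplifying should produce the rational expression of the theorem. I will carefully check the final algebra, and the indexing convention for $n$ versus the $x$-exponent of a piece, against small cases, for instance $Q=\varnothing$ with $z_1=2$. Any shift of index in $\overline{Z}_Q$ gets absorbed at this stage.
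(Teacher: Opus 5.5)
Your decomposition is the one in the paper's proof. The paper peels off the first strongly connected piece of a fully supported diagram, with two choices of cover relation at the junction, to get $Z'_Q=\overline{Z}_Q/(1-2\overline{Z}_Q)$; this is equivalent to your sum $\sum_t 2^{t-1}\overline{Z}_Q^{\,t}$. It then treats an arbitrary diagram as a disjoint union of fully supported ones to get $Z_Q=(1+Z'_Q)/(1-x-xZ'_Q)$, using \Cref{lem:decomposable}, \Cref{lem:wk_decomposable_patterns} and \Cref{prop:staircase_wk_decomp} to reduce avoidance of $Q$ to the pieces. Both of your functional equations are correct. One small correction to Step 1: condition (2) of \Cref{def:staircase} is not vacuous across a cut with $B_i\cap B_{i+1}=\varnothing$. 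It does hold automatically there, because the right endpoint of $B_i$ lies in neither $B_{i-1}$ nor $B_{i+1}$, and the left endpoint of $B_{i+1}$ lies in neither $B_i$ nor $B_{i+2}$.

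The gap is the last step, which you defer with ``should produce the rational expression of the theorem'' and ``any shift of index gets absorbed.'' Carrying out the substitution gives
\[
Z_Q(x)=\frac{1-\overline{Z}_Q(x)}{1-x+(x-2)\overline{Z}_Q(x)}.
\]
Its numerator is $1-\overline{Z}_Q$, not $1-2\overline{Z}_Q$. The expression displayed in the statement equals $1+xZ_Q(x)$, which is the series indexed by $\mfS_n$ (counting $\mfS_0$) rather than by $\mfS_{n+1}$.

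Your own proposed test detects this. For $Q=\varnothing$ one has $\overline{Z}=x+x^2+3x^3+\cdots$, and the displayed formula gives $x$-coefficient $1$, whereas $z_1=2$. No reindexing of $\overline{Z}_Q$ can repair this, since $\bar z_1=\bar z_2=1$ and $\bar z_3=3$ are already right. What is off is the statement's normalization of $Z_Q$. So you must either change the numerator to $1-\overline{Z}_Q$ or replace $Z_Q$ by $1+xZ_Q$ in the conclusion.

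The paper's proof has the same omission: it ends with ``combining \dots completes the proof'' without doing the algebra. Its applications show the inconsistency. \Cref{thm:polished_GF} is what the corrected formula gives from \Cref{prop:P_calc}. The formulas for $b_n$ and $c_n$ in \Cref{thm:semi-polished_GF12}, however, are $1+x$ times the true ones; for instance they give $b_1=c_1=1$ instead of $2$.
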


\begin{proof}
\Cref{lem:wk_decomposable_patterns} and \Cref{prop:staircase_wk_decomp} imply that, to enumerate staircase diagrams that avoid strongly indecomposable permutations, it suffices to consider staircase diagrams that are strongly connected and fully supported on $[n]$.  Each fully supported staircase diagram decomposes as a strongly connected staircase diagram and a fully supported staircase diagram on a smaller connected support.  
For example: 
\[
\begin{tikzpicture}[scale=0.35,baseline={(0,0.5)}]
            \ppsansnumbers{12}{ {0,0,0,0,0,0,0,0,0,0,0,0,1,1},{0,4,4,0,0,2,2,2,2,0,0,3,3,0},{7,7,0,6,6,6,6,0,5,5,5,0,0,0}}
    \end{tikzpicture}\quad  \rightarrow \quad 
\begin{tikzpicture}[scale=0.35,baseline={(0,0.5)}]
            \ppsansnumbers{12}{ {0,1,1},{3,3,0},{0,0,0}}
\end{tikzpicture}\quad + \quad
\begin{tikzpicture}[scale=0.35,baseline={(0,0.5)}]
            \ppsansnumbers{12}{ {0,0,0,0,0,0,0,0},{0,4,4,0,0,2,2,2,2,0,0},{7,7,0,6,6,6,6,0,5,5,5}}
    \end{tikzpicture}\]
Since $\mcD$ avoids $Q$ if and only if each component avoids $Q$, we have the recursion:
\[z'_n=\sum_{k=1}^{n} \bar z_{k}\cdot 2z'_{n-k}.\]
Thus 
\begin{equation}\label{eqn:Z_bar_2_Z_prime}
Z'_Q(x)=\overline{Z}_Q(x)+2\overline{Z}_Q(x)\cdot Z'_Q(x)\quad \text{giving}\quad Z'_Q(x)=\frac{\overline{Z}_Q(x)}{1-2\overline{Z}_Q(x)}.
\end{equation}
Furthermore, any staircase diagram is a disjoint union of fully supported staircase diagrams.  Since a strongly indecomposable permutation is also indecomposable, \Cref{lem:decomposable} implies 
\begin{equation}\label{eqn:Z_prime_2_Z}
Z_Q(x)=\frac{1+Z'(x)}{1-x-xZ'_Q(x)}.
\end{equation}
Combining \Cref{eqn:Z_bar_2_Z_prime} and \Cref{eqn:Z_prime_2_Z} completes the proof.
\end{proof}

\subsection{Chain indecomposable permutations}
We consider a further refinement of staircase diagrams into strongly connected chains which alternate between increasing and decreasing pieces.  One issue is that extremal blocks often belong to more than one chain.  So to construct a well-defined decomposition into strongly connected chains, we make the following definition.  

\begin{definition}
Let $\mcD=\{B_1,\ldots,B_k\}$ be strongly connected chain with $k\geq 2$.  Define 
$$\mcBr(\mcD):=\{B_1,\ldots, B_{k-1},B_k\cap B_{k-1}\}$$
with the same order structure as $\mcD$ (either increasing or decreasing).
If $\mcE=\mcBr(\mcD)$ for some staircase diagram $\mcD=\{B_1,\ldots,B_k\}$, we say $\mcE$ is a \textbf{broken staircase diagram (or broken staircase)} on $[r]$ where $r$ denotes the right endpoint of $B_{k-1}$.
\end{definition}
For example,
$$\mcD=\
\begin{tikzpicture}[scale=0.4,baseline={(0,0.5)}]
    \ppAff{9}{{0,0, 0,0,1,1,1,1},{0,0,3,3,3,3,3},{4,4,4,4, 0,0,0}}
\end{tikzpicture}\rightarrow\  \mcBr(\mcD)=\
\begin{tikzpicture}[scale=0.4,baseline={(0,0.5)}]
    \ppAff{9}{{0,0, 0,0,1,1,1,1},{0,0,3,3,3,3,3},{0,0,4,4, 0,0,0}}
\end{tikzpicture}
$$
Note that broken staircases are not staircase diagrams and if $\mcE=\{B_1,\ldots, B_k\}$ is broken staircase, then $B_k\subset B_{k-1}$.  Broken staircase diagrams were used in \cite{RS18} to enumerate smooth affine permutations and were defined for any chain, not necessarily strongly connected. 


Observe that $\mcE=\mcBr(\mcD)$ is not uniquely determined by $\mcD$, so given a broken staircase diagram $\mcE$, define the fiber over $\mcE$ as  
$$\mcBr^{-1}(\mcE):=\{\text{$\mcD$ is a strongly connected chain}\ |\ \mcBr(\mcD)=\mcE\ \}.$$
If $\mcE$ is broken staircase on $[r]$, then $\mcBr^{-1}(\mcE)$ contains a unique staircase diagram on $[n]$ for each $n>r$.  

Next we define an operator on staircase diagrams that deletes the first entry of the first block if allowed.  Let $\mcD=\{B_1,\ldots,B_k\}$ be a staircase diagram and let $l$ denote the left endpoint of $B_1$. Define the operator 

$$\mcFr(\mcD):=\begin{cases}
\{B_1\setminus\{l\}, B_2,\ldots ,B_k\}& \text{if $B_1\setminus\{l\}\not\subset B_2$}\\
\mcD & \text{if $B_1\setminus\{l\}\subseteq B_2$}
\end{cases}.$$
Unlike the broken staircase operator $\mcBr$, the operator $\mcFr$ gives a valid staircase diagram.  For $j\geq 0$, let $\mcFr^j(\mcD):=\underbrace{\mcFr\circ \cdots \circ\mcFr}_{\text{$j$ times}}(\mcD)$ and define the corresponding fiber over $\mcD$ as
$$\mcFr^{-1}(\mcD):=\{\mcD'\ \text{is a staircase diagram}\ | \ \mcFr^j(\mcD')=\mcD\ \text{from some $j\geq 0$}.\}$$
If $X$ is a set of staircase diagrams, then define $\mcFr^{-1}(X):=\bigcup_{\mcD\in X}\mcFr^{-1}(\mcD)$. 

\begin{definition}\label{def:broken_contains_avoids}
We say a broken staircase diagram \textbf{$\mcE$ contains the pattern $p$} if there exists $\mcD\in\mcFr^{-1}(\mcBr^{-1}(\mcE))$ such that $\Lambda(\mcD)$ contains $p$.  Otherwise, we say \textbf{$\mcE$ avoids the pattern $p$}. 
\end{definition}

By definition, a broken staircase $\mcE$ avoiding $p$ means that $\Lambda(\mcD)$ avoids $p$ where $\mcD$ is any ``extension" of the first and last blocks of $\mcE$.
 

\begin{definition}\label{def:chain-indecomposable}
 We say a permutation $p$ is \textbf{chain indecomposable} if the following hold:
 \begin{enumerate}
 \item For any staircase diagram $\mcD$, if $\Lambda(\mcD)$ contains $p$, then there exists a strongly connected chain $\mcD'\subseteq \mcD$ such that $\Lambda(\mcD')$ contains $p$. 
 \item For any strongly connected chain $\mcD$, if $\Lambda(\mcD)$ avoids $p$, then $\mcBr(\mcD)$ avoids $p$.
 \end{enumerate}
\end{definition}

The motivation for \Cref{def:chain-indecomposable} is to characterize permutations whose avoidance by $\Lambda(\mcD)$ is captured by the combinatorial structure of chains in $\mcD$.  

\begin{lemma}\label{lem:chain-indecomp-smooth}
    Let $p$ be smooth permutation with $p=\Lambda(\mcD)$.  If $p$ is chain indecomposable, then $\mcD$ is a strongly connected chain.  As a consequence, $p$ is strongly indecomposable.
\end{lemma}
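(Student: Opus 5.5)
We apply condition (1) of \Cref{def:chain-indecomposable} to the diagram $\mcD$ itself, where $p=\Lambda(\mcD)$. Trivially $\Lambda(\mcD)$ contains $p$, so condition (1) gives a strongly connected chain $\mcD'\subseteq\mcD$ with $\Lambda(\mcD')$ containing $p$. We then show $\mcD'=\mcD$ by a counting argument. Throughout, we view $p$ as a pattern in $\mfS_{b-a+2}$, where $S(\mcD)=[a,b]$. We may assume $S(\mcD)$ is an interval: otherwise $p$ has fixed points at the ends, or is decomposable, and we restrict to the convention of \Cref{sec:sd_patterns}.

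The key step is a size comparison. A subset $\mcD'\subseteq\mcD$ that forms a chain must consist of consecutive blocks $\{B_x,\ldots,B_y\}$. Indeed, the cover relations only join blocks $B_i,B_{i+1}$, so a chain with more than one element consists of blocks linked by covers, and any nonconsecutive subset is not a connected staircase. Its support is therefore $S(\mcD')=\bigcup_{i=x}^y B_i\subseteq S(\mcD)$. By \Cref{prop:staircase_permutation_properties} part (1), $\Lambda(\mcD')$ lies in $\mfS_{|S(\mcD')|+1}$ as a pattern, while $p$ lies in $\mfS_{|S(\mcD)|+1}$. A permutation can only contain a pattern of size at most its own, so $|S(\mcD')|\geq|S(\mcD)|$, and hence $S(\mcD')=S(\mcD)$. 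Equality of sizes then forces $\Lambda(\mcD')=p=\Lambda(\mcD)$ as patterns once both are translated to start at $1$. Both are staircase diagrams on the same interval, so the injectivity in \Cref{thm:smooth_bijection} gives $\mcD'=\mcD$. Therefore $\mcD$ is a strongly connected chain.

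For the consequence, we use the remark preceding the definition of strongly connected $\ell$-chains. By \Cref{prop:staircase_wk_decomp} and \Cref{prop:staircase_wk_decomp2}, a staircase diagram with interval support is strongly connected if and only if $\Lambda$ of it is strongly indecomposable. Applying this to $\mcD$ shows that $p$ is strongly indecomposable.

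The main obstacle is the claim that a strongly connected chain $\mcD'\subseteq\mcD$ must consist of consecutive blocks, and that the identification $\mcD'=\mcD$ is legitimate. The definition of "$\subseteq$" in \Cref{def:chain-indecomposable} is somewhat informal, so it must be read as a subdiagram of consecutive blocks with the induced order. We also need the translation of support, so that equal patterns come from equal diagrams. To handle this, we first note that strong connectivity requires $B_i\cap B_{i+1}\neq\varnothing$ for consecutive members. We then use that blocks satisfy $l_i<l_{i+1}$ and $r_i<r_{i+1}$, so a proper consecutive subfamily misses either the left endpoint of $B_1$ or the right endpoint of $B_k$. In that case $S(\mcD')\subsetneq S(\mcD)$, which contradicts the size bound.
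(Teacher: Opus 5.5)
Your proposal is correct and follows the paper's argument: apply condition (1) of \Cref{def:chain-indecomposable} to $\mcD$ itself, then observe that $\Lambda(\mcD')$ can only contain $\Lambda(\mcD)$ if $S(\mcD')=S(\mcD)$, which forces $\mcD'=\mcD$. Your extra steps fill in what the paper leaves implicit: the injectivity of $\Lambda$, and deducing strong indecomposability via \Cref{prop:staircase_wk_decomp2}. Note that the injectivity route works even if $\mcD'$ is not a block of consecutive blocks, so the consecutiveness discussion is not needed.
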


\begin{proof}
Since $p$ is chain indecomposable and $p$ contains itself, there exists a strongly connected chain $\mcD'\subseteq \mcD$ such that $\Lambda(\mcD')$ contains $p=\Lambda(\mcD)$.  But $\Lambda(\mcD)$ cannot contain a permutation of a staircase diagram with strictly smaller support. Hence $\mcD'=\mcD$.
\end{proof}

Note that the converse of \Cref{lem:chain-indecomp-smooth} is false due to the second condition in \Cref{def:chain-indecomposable}.  For example, consider $p=\Lambda(\{[1,2]\prec [2,4]\})=54132$ and $\mcD=\{[1,2]\prec[2,3]\}$. Then $\Lambda(\mcD)=4312$ avoids $p$, but $\mcBr(\mcD)$ contains $p$ since $\{[1,2]\prec[2,4]\}$ belongs to $\mcBr^{-1}(\mcBr(\mcD))$.  Thus $p$ is the permutation of a strongly connected chain, but it is not chain indecomposable.

By \Cref{thm:overlaps} and \Cref{thm:strong_chains}, the permutations $\Lambda(\mcOv^{\pm}(q))$ and $\Lambda(\mcCh^{\pm}(\ell))$ are chain indecomposable.  In particular, the property of a staircase diagram avoiding $q$-overlaps and/or strongly connected $\ell$-chains reduces to a property on strongly connected chains of the staircase diagram.  In particular, the avoidance/containment of $\mcOv^{\pm}(q)$ and $\mcCh^{\pm}(\ell)$ is invariant on each fiber $\mcFr^{-1}(\mcBr^{-1}(\mcE))$ and hence the second condition in \Cref{def:chain-indecomposable} is satisfied.

Let $\mcD=\{B_1,\ldots,B_k\}$ be a strongly connected staircase diagram on $[n]$.  Consider the sequence of maximal chains in $\mcD$ ordered by the induced linear order on blocks:
\[\mcD=\mcD_1\cup\cdots \cup D_K.\]
For each maximal chain, $\mcD_i=\{B_x,\ldots,B_y\}$ where $i<K$, define the broken staircase diagram 
\[\mcE_i:=\{B_{x}\setminus B_{x-1}, B_{x+1}, B_{x+2},\ldots, B_{y-1}, B_y\cap B_{y-1}\}.\]
Here we set $B_0:=\varnothing$.  When $i=K$, we define $\mcE_K$ by replacing $B_x$ with $B_{x}\setminus B_{x-1}$, but leaving $B_y=B_k$ unchanged.  In particular $\mcE_K$ is an unbroken staircase diagram with at least two blocks.  Following \cite{RS18}, we consider the map \[\mcD\mapsto (\mcE_1,\ldots,\mcE_{K-1}, \mcE_K)\] which decomposes $\mcD$ to a disjoint union of broken staircases followed by a strongly connected chain.  For example:
\[
\begin{tikzpicture}[scale=0.35,baseline={(0,0.5)}]
            \ppsansnumbers{12}{ 
            {0,0,0,6,6,6,6,0,0,0,0,1,1,1},
            {0,4,4,4,0,2,2,2,2,0,3,3,0,0},
            {7,7,0,0,0,0,0,0,5,5,5,0,0,0}}
    \draw[dashed,red, thick] (-4,-1)--(-4,4);
    \draw[dashed,red,thick] (-9,-1)--(-9,4);
    \end{tikzpicture}\quad  \rightarrow \quad 
\begin{tikzpicture}[scale=0.35,baseline={(0,0.5)}]
            \ppsansnumbers{12}{ 
            {0,1,1,1},
            {3,3,0,0},
            {5,0,0,0}}
\end{tikzpicture}\quad + \quad
\begin{tikzpicture}[scale=0.35,baseline={(0,0.5)}]
            \ppsansnumbers{12}{ 
            {6,6,0,0,0},
            {2,2,2,2,0},
            {0,0,0,5,5}}
    \end{tikzpicture}\quad + \quad
\begin{tikzpicture}[scale=0.35,baseline={(0,0.5)}]
            \ppsansnumbers{12}{ 
            {0,0,0,6,6},
            {0,4,4,4,0},
            {7,7,0,0,0}}
    \end{tikzpicture}
\]

\begin{proposition}\label{prop:broken_decomposition}
    Let $\mcD$ be a strongly connected staircase diagram and let $p$ be a chain indecomposable permutation.  Consider the decomposition $\mcD\mapsto (\mcE_1,\ldots,\mcE_K)$ as above.  Then $\Lambda(\mcD)$ avoids $p$ if and only if $\mcE_i$ avoids $p$ for each $1\leq i \leq K$.
\end{proposition}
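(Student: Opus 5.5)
The plan is to reduce both directions to the two defining conditions of chain indecomposability in \Cref{def:chain-indecomposable}, using the fact that the maximal chains $\mcD_1,\ldots,\mcD_K$ of $\mcD$ are themselves strongly connected chains and that each $\mcE_i$ is obtained from $\mcD_i$ by trimming its first and (for $i<K$) last block.

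For the forward direction ($\Lambda(\mcD)$ avoids $p$ implies each $\mcE_i$ avoids $p$), the first observation is that each maximal chain $\mcD_i=\{B_x,\ldots,B_y\}$ is a subdiagram of consecutive blocks. By \Cref{prop:subdiagram_containment}, $\Lambda(\mcD_i)$ is contained in $\Lambda(\mcD)$, so $\Lambda(\mcD_i)$ avoids $p$. Condition (2) of \Cref{def:chain-indecomposable} then gives that $\mcBr(\mcD_i)$ avoids $p$ for $i<K$.

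Next, I would compare $\mcE_i$ with $\mcBr(\mcD_i)$. They differ only in that the first block $B_x$ is replaced by $B_x\setminus B_{x-1}$. Since $\mcD$ is strongly connected, $B_{x-1}\cap B_x\neq\varnothing$. Also $B_{x-1}\cap B_x$ is an initial segment of $B_x$, so $B_x\setminus B_{x-1}$ is $B_x$ with some initial boxes removed. Hence $\mcD_i$ is obtained from an element of $\mcBr^{-1}(\mcE_i)$ by prepending left boxes, which means it lies in $\mcFr^{-1}(\mcBr^{-1}(\mcE_i))$.

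The avoidance condition for $\mcE_i$ quantifies over this entire fiber. I would argue that the fiber of $\mcE_i$ coincides with the fiber of $\mcBr(\mcD_i)$ extended on the left. This works because $\mcFr$ only deletes first boxes while the first block is not contained in $B_{x+1}$, and $B_x\setminus B_{x-1}\not\subseteq B_{x+1}$ by the staircase condition (2) on $B_{x-1},B_x,B_{x+1}$. So avoidance for $\mcBr(\mcD_i)$ together with its left extensions gives avoidance for $\mcE_i$. For $i=K$, $\mcE_K$ is unbroken, and the same argument applies with $\mcBr$ omitted.

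For the converse, suppose $\Lambda(\mcD)$ contains $p$. Condition (1) of \Cref{def:chain-indecomposable} gives a strongly connected chain $\mcD'\subseteq\mcD$ with $\Lambda(\mcD')$ containing $p$, and $\mcD'$ lies inside some maximal chain $\mcD_i$. Write $\mcD'=\{B_a,\ldots,B_b\}$ with $x\le a\le b\le y$. By \Cref{prop:subdiagram_containment}, $\Lambda(\mcD_i)$ contains $p$. It then suffices to show that $\mcD_i$ lies in $\mcFr^{-1}(\mcBr^{-1}(\mcE_i))$, which is exactly what the forward argument established. Therefore $\mcE_i$ contains $p$ by \Cref{def:broken_contains_avoids}.

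The main obstacle is the fiber comparison in the forward direction. Condition (2) only controls extensions of $\mcBr(\mcD_i)$, that is, of the last block. Avoidance of $\mcE_i$, however, also requires controlling all left-extensions of the truncated first block, including ones whose first block is longer than $B_x$. My first attempt would be a symmetric version of condition (2), obtained by applying the automorphism $s_j\mapsto s_{n-j}$, which swaps first and last blocks. Failing that, I would argue directly through \Cref{lem:containment_expand}: lengthening the first block only inserts an anti-diagonal piece that stays decreasing, by the $B_1$-analogue of \Cref{lem:regions_A_2_4_decreasing}.
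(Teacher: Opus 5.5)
Your overall plan is the paper's. The forward direction uses \Cref{prop:subdiagram_containment} and condition (2) of \Cref{def:chain-indecomposable}, and the converse uses condition (1) together with $\mcD_i\in\mcFr^{-1}(\mcBr^{-1}(\mcE_i))$. Your converse is fine, including the staircase-condition argument that $\mcFr$ can strip $B_x$ down to $B_x\setminus B_{x-1}$.

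\emph{The gap is the fiber comparison in the forward direction.} Write $B_{x-1}=[l_{x-1},r_{x-1}]$ and $B_x=[l_x,r_x]$, so $l_x\le r_{x-1}$ by strong connectivity. Apart from the freedom in the last block, which both fibers share, the fibers differ as follows:
\begin{itemize}
\item $\mcFr^{-1}(\mcBr^{-1}(\mcE_i))$ consists of chains with first block $[a,r_x]$ for every $a\le r_{x-1}+1$;
\item $\mcFr^{-1}(\mcBr^{-1}(\mcBr(\mcD_i)))$ only allows $a\le l_x$.
\end{itemize}
So the fibers do not coincide. Your $\mcFr$ observation proves only that the second is contained in the first. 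That is the inclusion the converse needs, not the forward direction. The diagrams with $l_x<a\le r_{x-1}+1$, whose first block is a proper subinterval of $B_x$, are never reached by condition (2). Your sentence ``avoidance for $\mcBr(\mcD_i)$ together with its left extensions gives avoidance for $\mcE_i$'' fails exactly there.

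The missing step is containment under truncation. By the $B_1$-analogue of \Cref{lem:containment_expand}, enlarging the first block from $[a,r_x]$ back to $B_x$, keeping the other blocks, yields a permutation that contains the original one. The enlarged diagram lies in the fiber of $\mcBr(\mcD_i)$; for $i=K$ it is $\mcD_K$ itself. This is precisely what the paper means by ``using \Cref{lem:containment_expand} and the assumption that $p$ is chain indecomposable.''

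Your final paragraph aims at the wrong diagrams. By \Cref{def:broken_contains_avoids}, the statement that $\mcBr(\mcD_i)$ avoids $p$ already quantifies over $\mcFr^{-1}$, i.e.\ over all left extensions of $B_x$. So first blocks longer than $B_x$ are covered by condition (2) as it stands. For $i=K$, apply condition (2) to $\mcD_K$: the fiber of $\mcBr(\mcD_K)$ contains every left extension of $\mcD_K$.

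Neither remedy you propose would work in any case:
\begin{itemize}
\item The automorphism $s_j\mapsto s_{n-j}$ replaces $p$ by its image $w_0pw_0$; for example, it exchanges $54123$ and $34521$. So it yields condition (2) for a different pattern, not a left-hand version of condition (2) for $p$.
\item An argument that lengthening the first block cannot create $p$ because the inserted anti-diagonal piece is decreasing uses no property of $p$. Yet such lengthening does create new patterns in general: $\Lambda(\{[1,2]\prec[2,3]\})=4312$ avoids $4321$, while its left extension $\Lambda(\{[1,3]\prec[3,4]\})=54213$ contains it.
\end{itemize}
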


\begin{proof}
Let $\mcD_1,\ldots,\mcD_K$ denote the maximal chains of $\mcD$ corresponding to $\mcE_1,\ldots,\mcE_K$.  If $\Lambda(\mcD)$ avoid $p$, then each of $\Lambda(\mcD_i)$ must avoid $p$ by \Cref{prop:subdiagram_containment}.  Using \Cref{lem:containment_expand} and the assumption that $p$ is chain indecomposable, it follows that each of the corresponding broken staircases $\mcE_i$ avoids $p$.

Conversely, suppose that each $\mcE_i$ avoids $p$.  Since $\mcD_i\in \mcFr^{-1}(\mcBr^{-1}(\mcE_i))$, we get that the permutation $\Lambda(\mcD_i)$ avoids $p$.  This implies $\Lambda(\mcD')$ avoids $p$ for every chain $\mcD'\subseteq\mcD$ since each chain in $\mcD$ is in some maximal $\mcD_i$.  Again, since $p$ in chain indecomposable, we have that $\Lambda(\mcD)$ avoids $p$.
\end{proof}

Let $Q$ denote a set of permutation patterns and define
$$z^+_n:=\#\left\{\text{$\mcD$ on $[n]$}\ \Big| \ \substack{\displaystyle\mcD\ \text{is a fully supported, strongly connected}\\ \displaystyle \text{increasing chain and avoids}\ Q}\right\}$$ with generating function 
$$Z^+_Q(x):=\sum_{n\geq 1} z^+_n\, x^n.$$
Recall that 
\[\bar z_n=\#\{\text{$\mcD$ on $[n]$}\ |\ \text{$\mcD$ is fully supported, strongly connected, and avoids $Q$}\}\]
with generating function \[\overline{Z}_Q(x)=\sum_{n\geq 1}\bar z_n\, x^n.\]  We say a set of permutations $Q$ is \textbf{inverse invariant} if $Q=Q^*:=\{p^{-1}\ |\ p\in Q\}$.

\begin{theorem}\label{thm:connected2chains_gf}
    Let $Q$ be an inverse invariant set of chain indecomposable permutations.  Then 
    \[\overline{Z}_Q(x)=\frac{xZ^+_Q(x)}{2x-(1-x)Z^{+}_Q(x)}.\]
\end{theorem}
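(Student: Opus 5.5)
The plan is to count fully supported, strongly connected staircase diagrams through the decomposition $\mcD\mapsto(\mcE_1,\ldots,\mcE_K)$ into maximal chains, using \Cref{prop:broken_decomposition}, and then to express every piece of that decomposition in terms of $Z^+_Q(x)$.

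\emph{Reduction to pieces.} For a strongly connected, fully supported $\mcD$, the maximal chains $\mcD_1,\ldots,\mcD_K$ alternate between increasing and decreasing. Consecutive chains share their extremal block. So $\mcD$ is determined by three pieces of data:
\begin{enumerate}
\item the direction of $\mcD_1$;
\item the broken staircases $\mcE_1,\ldots,\mcE_{K-1}$;
\item the final unbroken chain $\mcE_K$.
\end{enumerate}
The supports of the pieces partition $[n]$, so the sizes add. I will check that this map is a bijection onto tuples satisfying the natural compatibility conditions: $\mcE_1$ has an untrimmed first block; each $\mcE_i$ with $i\geq 2$ has its first block of the form $B_x\setminus B_{x-1}$; and the directions alternate. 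By \Cref{prop:broken_decomposition}, $\mcD$ avoids $Q$ if and only if every $\mcE_i$ avoids every $p\in Q$. Because $Q$ is inverse invariant, \Cref{prop:staircase_permutation_properties}(4) lets decreasing pieces be counted exactly like increasing ones. This produces the factor $2$ for the choice of initial direction, and the single-block diagrams, which are both increasing and decreasing, must be counted only once. The result is a geometric series
\[
\overline{Z}_Q = S + 2\Bigl(P + F\cdot \frac{1}{1-M}\cdot L\Bigr),
\]
where the terms are as follows:
\begin{itemize}
\item $S$ counts single-block diagrams;
\item $P$ counts increasing chains with at least two blocks;
\item $F$ counts first broken pieces;
\item $M$ counts middle broken pieces (trimmed first block, broken last block);
\item $L$ counts final trimmed chains.
\end{itemize}

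\emph{Fiber counting.} Next I express $F$, $M$ and $L$ through $P$ (and hence through $Z^+_Q$). The key facts are the two fiber statements. First, a broken staircase on $[r]$ has exactly one preimage under $\mcBr$ on each $[n]$ with $n>r$, which contributes a factor $x/(1-x)$. Second, the $\mcFr$-fiber, which re-extends a trimmed first block leftwards, contributes the analogous factor $1/(1-x)$ in the trimmed variable. By \Cref{def:broken_contains_avoids} and condition (2) of \Cref{def:chain-indecomposable}, avoidance of $Q$ is constant on each fiber $\mcFr^{-1}(\mcBr^{-1}(\mcE))$. Hence passing to fibers multiplies generating functions by these rational factors without disturbing the avoidance condition, giving relations of the form
\[
P = F\cdot\frac{x}{1-x}, \qquad M = F\cdot(\text{trimming factor}), \qquad L = P\cdot(\text{trimming factor}).
\]
I would also verify directly that $S$ is the single-block part of $Z^+_Q$, so that $Z^+_Q = S + P$.

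\emph{Algebra and the main obstacle.} Substituting these relations and simplifying should yield $\overline{Z}_Q = xZ^+_Q/(2x-(1-x)Z^+_Q)$. I will sanity-check the result with $Q=\varnothing$ against the enumeration in \cite{RS17,RS18}. The main obstacle is the bookkeeping at the junctions between consecutive chains. I must pin down exactly which boxes belong to which $\mcE_i$, confirm that the trimmed blocks $B_x\setminus B_{x-1}$ range over precisely the fiber of $\mcFr$ (including the boundary case $B_1\setminus\{l\}\subseteq B_2$, where $\mcFr$ acts trivially), and handle single-block chains and $K=1$ without double counting. If the naive trimming factor turns out to be wrong, I would instead recompute $M$ and $L$ directly by conditioning on the size of the overlap $B_{x-1}\cap B_x$, using the fact from \Cref{def:staircase}(2) that the overlap with the previous chain is constrained only through the extremal block.
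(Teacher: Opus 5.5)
\textbf{The gap is the trimming step.} Your skeleton is fine. It is the unrolled form of the paper's recursion: the paper peels off $\mcE_1$, notes that what remains is again a fully supported, strongly connected diagram with at least two blocks, and gets $\bar z_n=(2z^+_n-1)+\sum_k z^b_k(\bar z_{n-k}-1)$. Your relation $P=F\cdot\frac{x}{1-x}$ is also correct; it is the paper's identity $z^b_k=z^+_{k+1}-z^+_k$. However, there is no nontrivial trimming factor: in fact $M=F$ and $L=P$ exactly.

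The missing idea is condition (2) of \Cref{def:staircase}. At the extremal block $B_x$ shared by two consecutive maximal chains, $B_{x-1}\cup B_{x+1}$ is disconnected, so $r_{x-1}+1<l_{x+1}$. Hence $B_x\setminus B_{x-1}=[r_{x-1}+1,r_x]$ is a legitimate first block for $\{B_x\setminus B_{x-1},B_{x+1},\ldots\}$. That chain is still strongly connected, since the trimmed block contains $B_x\cap B_{x+1}$. Conversely, glue an arbitrary broken staircase or chain of the opposite direction after the nonempty last block $B_{x-1}\cap B_x$ of the previous piece. The result is always a valid staircase diagram, because the new $B_{x+1}$ starts at $l_{x+1}\ge r_{x-1}+2$. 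So trimmed pieces are in size-preserving bijection with untrimmed ones. Apart from alternation of direction, there are no compatibility conditions between pieces. The fiber $\mcFr^{-1}$ plays no counting role. It appears only in \Cref{def:broken_contains_avoids}, to guarantee that whether $\mcE_i$ avoids $Q$ does not depend on how much of $B_x$ was stripped off; that invariance is exactly what \Cref{prop:broken_decomposition} supplies.

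\textbf{The proposed factor gives the wrong count.} With a factor $1/(1-x)$ you get the wrong answer. Take $Q=\varnothing$, where $p_3=1$, $p_4=4$, $p_6=41$, $f_2=1$, $f_3=3$. The correct count is $\bar z_6=1+2\cdot 41+2(f_2p_4+f_3p_3)=97$, which agrees with the stated formula. Using $L=P/(1-x)$ instead gives $99$. Your fallback of conditioning on $|B_{x-1}\cap B_x|$ would also double count, since that overlap is already recorded as the last block of $\mcE_{i-1}$.

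\textbf{Finishing the algebra.} Once you use $M=F$ and $L=P$, your series becomes $\overline{Z}_Q=S+2P/(1-F)$. Substituting $F=(1-x)P/x$ and $Z^+_Q=S+P$, this simplifies to the stated formula provided $S=x/(1-x)$. That condition says every single block $u_{[1,n]}$ avoids $Q$. The paper's $-1$ corrections make the same tacit assumption.
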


\begin{proof}
First note that since $Q$ is inverse invariant, we have $z^+_n=z^-_n$ where $z^-_n$ denotes the analogous number of strongly connected decreasing chains which avoid $Q$.  Let $\mcD$ denote a fully supported, strongly connected staircase diagram on $[n]$.  Then either $\mcD$ is a chain (increasing or decreasing), or $\mcD$ decomposes as $(\mcE_1,\ldots,\mcE_K)$ as in \Cref{prop:broken_decomposition} with $K\geq 2$.  Let $z_k^b$ denote the number of increasing broken staircases on $[k]$ that avoid $Q$. Since $Q$ is inverse invariant, $z_k^b$ also equals the number of decreasing broken staircases on $[k]$ that avoid $Q$. By \Cref{prop:broken_decomposition}, we have that 
\begin{equation}\label{eqn:broken_decomposition}
\bar z_n=(2z^+_n-1)+\sum_{k=2}^{n-3}z_k^b\cdot (\bar z_{n-k}-1).
\end{equation}
Observe that the bounds on the sum above come from the fact that broken staircases have minimum support size $k=2$ and that the staircase diagram $\mcE_K$ must have at least two blocks and hence $n-k\geq 3$.  Next, we claim that $z_k^b=z^+_{k+1}-z^{+}_k$.
Let $\mcD'=\{B_1,\ldots, B_{k}\}$ be a fully supported increasing chain on $[k+1]$ and define 
\[\phi(\mcD'):=\{B_1,\ldots, B_{k-1}, B_{k}\setminus\{k+1\}\}.\]
Either $\phi(\mcD')$ is a broken staircase on $[k]$, or a strongly connected chain on $[k]$.  Moreover the map $\phi$ is a bijection from increasing chains on $[k+1]$ to the disjoint union of increasing broken staircases and increasing chains on $[k]$.  Thus the claim is proved.
Now \Cref{eqn:broken_decomposition} becomes 
\begin{equation}\label{eqn:broken_decomposition2}
\bar z_n=(2z^+_n-1)+\sum_{k=2}^{n-3}(z^+_{k+1}-z^{+}_k)\cdot (\bar z_{n-k}-1)
\end{equation} and thus
\[\overline{Z}(x)=\left(2Z^+(x)-\frac{x}{1-x}\right)+\left(\frac{(1-x)Z^+(x)-x}{x}\right)\cdot\left(\overline{Z}(x)-\frac{x}{1-x}\right).\]
Solving for $\overline{Z}(x)$ proves the theorem.
\end{proof}

As with \Cref{thm:str_conn_2_all_SD}, the proof of \Cref{thm:connected2chains_gf} is modeled after similar arguments found in previous papers on enumerating staircase diagrams such as \cite{Az23, RS17, RS18}.  We summarize \Cref{thm:str_conn_2_all_SD} and \Cref{thm:connected2chains_gf} with the following corollary.

\begin{corollary}\label{cor:GF_summary}
    Let $Q$ be a set of permutation patterns and let $Z_Q(x)$ denote the generating function for the number of staircase diagrams on $[n]$ that avoid $Q$ (or equivalently, smooth permutations in $\mfS_{n+1}$ that avoid $Q$).
    \begin{enumerate}
    \item If $Q$ is a strongly indecomposable set, then $Z_Q(x)$ is determined by $\overline{Z}_Q(x)$, the generating function for the number of strongly connected staircase diagrams that avoid $Q$.  Furthermore, $Z_Q(x)$ is rational if and only if $\overline{Z}_Q(x)$ is rational.

    \item If $Q$ is an inverse invariant and chain indecomposable set, then $Z_Q(x)$ is determined by $Z^+_Q(x)$, the generating function for the number of strongly connected, increasing chains that avoid $Q$.  Furthermore, $Z_Q(x)$ is rational if and only if $Z^+_Q(x)$ is rational.
    \end{enumerate}
\end{corollary}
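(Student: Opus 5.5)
The corollary is a repackaging of \Cref{thm:str_conn_2_all_SD} and \Cref{thm:connected2chains_gf}, so the plan is to compose the two explicit formulas and check that each substitution is a M\"obius (linear fractional) transformation with rational coefficients. Such transformations preserve rationality in both directions, as long as each is invertible on formal power series. For part (1), \Cref{thm:str_conn_2_all_SD} gives
\[Z_Q=\frac{1-2\overline{Z}_Q}{1-x+(x-2)\overline{Z}_Q}.\]
This shows that $Z_Q$ is determined by $\overline{Z}_Q$, and that it is rational whenever $\overline{Z}_Q$ is. For the converse I solve for $\overline{Z}_Q$:
\[\overline{Z}_Q=\frac{(1-x)Z_Q-1}{(x-2)Z_Q+2}.\]
Since $Z_Q$ has constant term $1$, the denominator has constant term $(0-2)\cdot 1+2=0$. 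Here I need care: I will rewrite the formula so that it is a genuine quotient of power series, which requires expanding to first order. The $x$-coefficient of the denominator is $1-2z_1$, and $z_1=1$ in all cases of interest, since $\Lambda(\{\{1\}\})=21$ avoids any strongly indecomposable pattern of length at least $3$. So both numerator and denominator are divisible by $x$, and after cancelling, $\overline{Z}_Q$ is a rational expression in $x$ and $Z_Q$. Hence it is rational whenever $Z_Q$ is.

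For part (2), a chain indecomposable set is strongly indecomposable by \Cref{lem:chain-indecomp-smooth}, so part (1) applies. \Cref{thm:connected2chains_gf} expresses
\[\overline{Z}_Q=\frac{xZ^+_Q}{2x-(1-x)Z^+_Q},\]
again a M\"obius transformation. Inverting it gives
\[Z^+_Q=\frac{2x\overline{Z}_Q}{x+(1-x)\overline{Z}_Q},\]
and composing with part (1) yields both directions of the equivalence. The composite of two such maps is again linear fractional, so $Z_Q$ is expressed rationally in $Z^+_Q$ and conversely.

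The only real obstacle is the degenerate denominators: the constant terms of the denominators vanish, because $Z^+_Q$ and $\overline{Z}_Q$ start at $x$. In each inversion I will factor out the appropriate power of $x$ and check that the leading coefficient is nonzero (for instance $z^+_1=1$). This guarantees that the formal division is legitimate and that no information is lost.
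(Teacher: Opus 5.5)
Your proposal is correct and is essentially the paper's argument. The paper gives no separate proof; it reads the corollary off \Cref{thm:str_conn_2_all_SD} and \Cref{thm:connected2chains_gf}, each an invertible linear fractional relation over $\mathbb{Q}(x)$, with part (2) feeding into part (1) via \Cref{lem:chain-indecomp-smooth} (non-smooth patterns can be discarded first, since no smooth permutation contains them).

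The slips are cosmetic:
\begin{itemize}
\item Your inverse in part (1) has the wrong overall sign.
\item In fact $z_1=2$, not $1$, because the empty diagram counts. This points to a typo in the paper: composing \Cref{eqn:Z_bar_2_Z_prime} with \Cref{eqn:Z_prime_2_Z} gives numerator $1-\overline{Z}_Q$ rather than the displayed $1-2\overline{Z}_Q$. The correct inverse is then $\overline{Z}_Q=\frac{1-(1-x)Z_Q}{1+(x-2)Z_Q}$, whose denominator has constant term $-1$, so the degeneracy you worried about never arises.
\item In any case you only need the denominators to be nonzero series, since you can divide in $\mathbb{Q}((x))$.
\end{itemize}
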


\subsection{Calculating generating functions}

In this section, we apply \Cref{thm:str_conn_2_all_SD}, \Cref{thm:connected2chains_gf}, and \Cref{cor:GF_summary} to various sets of permutations $Q$ involving the permutations $\Lambda(\mcOv^{\pm}(q))$ and $\Lambda(\mcCh^{\pm}(\ell))$ from \Cref{sec:sd_patterns}.  Note that these permutations are both strongly indecomposable and chain indecomposable, so the theory applies.

We first consider the set of polished permutations from \Cref{thm:GG_polished}: \[Q_P:=\{\Lambda(\mcOv^{\pm}(2)), \Lambda(\mcCh^{\pm}(3))\}=\{45321, 54312,
34521, 54123\}\]
By \Cref{cor:polished_staircases}, a staircase diagram $\mcD$ avoids $Q_P$ if and only if $\mcD$ avoids strongly connected $3$-chains and $2$-overlaps.  Define
\[\bar p_n=\#\{\text{$\mcD$ on $[n]$}\ |\ \text{$\mcD$ is fully supported, strongly connected, and avoids $Q_P$}\}\]
\begin{proposition}\label{prop:P_calc}
If $n\geq 2$, then $\bar p_n=2F_{n-1}-1$ where $F_n$ is the $n$-th Fibonacci number.  It follows that
\[\sum_{n\geq 1}\bar p_n\, x^n=\frac{x-x^2+x^3}{(1-x)(1-x-x^2)}.\]
\end{proposition}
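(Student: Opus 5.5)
By \Cref{thm:overlaps} and \Cref{thm:strong_chains} (as packaged in \Cref{cor:polished_staircases}), $\bar p_n$ counts staircase diagrams $\mcD=\{B_1,\ldots,B_k\}$ fully supported on $[n]$ that satisfy three conditions. First, $\mcD$ is strongly connected and avoids $2$-overlaps, so $|B_i\cap B_{i+1}|=1$ for every $i$. Second, $\mcD$ contains no three consecutive blocks forming a chain, so the relations between consecutive blocks alternate between $\prec$ and $\succ$. Third, $\mcD$ satisfies \Cref{def:staircase}. I will translate these into a statement about block sizes and count compositions.

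First I fix the shape. Write $B_1=[1,b_1]$, $B_2=[b_1,b_2]$, $\ldots$, $B_k=[b_{k-1},n]$, with consecutive blocks meeting in exactly one box. The strict inequalities $l_i<l_{i+1}$ and $r_i<r_{i+1}$ force every block to have at least $2$ boxes. For an interior block $B_i=[b_{i-1},b_i]$ with $1<i<k$, the alternation makes $B_i$ either a local max or a local min among $B_{i-1},B_i,B_{i+1}$. Condition (2) of \Cref{def:staircase} then requires $B_{i-1}\cup B_{i+1}=[\cdot,b_{i-1}]\cup[b_i,\cdot]$ to be disconnected, i.e.\ $b_i\ge b_{i-1}+2$, so interior blocks have at least $3$ boxes. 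Condition (1) holds automatically because consecutive blocks intersect. Conversely, any such choice of sizes, together with one of the two alternating orientations, gives a valid staircase diagram. The orientation choice only exists when $k\ge 2$; for $k=1$ there is the single diagram $\{[1,n]\}$.

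Next I count. Put $t_i=|B_i|-1$. Then $\sum t_i=n-1$, with $t_1,t_k\ge1$, interior $t_i\ge2$, and $k\ge2$. The generating function for such compositions is
\[
\frac{x^2}{(1-x)^2}\cdot\frac{1}{1-\frac{x^2}{1-x}}=\frac{x^2}{(1-x)(1-x-x^2)}.
\]
Including the factor $x$ for the shift from $n-1$ to $n$, the factor $2$ for orientation, and the term $x/(1-x)$ for single blocks, I get
\[
\sum_{n\ge1}\bar p_n x^n=\frac{x}{1-x}+\frac{2x^3}{(1-x)(1-x-x^2)}=\frac{x-x^2+x^3}{(1-x)(1-x-x^2)}.
\]
Finally, extracting coefficients gives $\bar p_n=1+2(F_{n-1}-1)=2F_{n-1}-1$. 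To see this, I will verify by partial fractions, or by a short induction, that the number of these compositions of $m$ equals $F_m-1$; the index convention for $F$ should be checked against small cases, for instance $\bar p_2=1$ and $\bar p_3=3$.

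The main obstacle is the first step: justifying carefully that condition (2) of \Cref{def:staircase} is exactly the constraint ``interior blocks have size at least $3$''. This needs one case for $B_i$ a local max and one for $B_i$ a local min. I also need to confirm that alternation plus size-one overlaps imposes no further restriction. The rest is routine composition counting.
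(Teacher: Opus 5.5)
Your proposal is correct and takes essentially the same route as the paper: one-box overlaps and alternating relations, interior blocks of size at least $3$ forced by condition (2) of \Cref{def:staircase}, and a two-to-one map onto compositions. The only difference is in the counting step. The paper subtracts $2$ from each interior block size to get unrestricted $k$-compositions of $n-k+1$, then sums $\binom{n-k}{k-1}$ using $\sum_{k}\binom{n-k}{k}=F_n$ (so $F_0=F_1=1$); you instead compute the generating function first, and both give $2F_{n-1}-1$.
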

\begin{proof}
Let $\mcD=\{B_1,\ldots,B_k\}$ be a strongly connected staircase diagram and let $\alpha_i:=|B_i|$.  If $\mcD$ avoids $Q_P$, then it consists of alternating $2$-chains with $1$-overlaps between blocks.  If $k>1$, then the map $$\mcD\rightarrow (\alpha_1-1,\alpha_2-2,\alpha_3-2,\ldots,\alpha_{k-3}-2,\alpha_{k-1}-2,\alpha_k-1)$$
is a two-to-one map between strongly connected staircase diagrams on $[n]$ with $k$ blocks that avoid $Q_P$ and $k$-compositions of $(n-k+1)$.  Summing over $k$ gives $\bar p_n=2F_{n-1}-1$. For example:

$$\begin{tikzpicture}[scale=0.35,baseline={(0,0.25)}]
    \ppsansnumbers{15}{
    {0,8,4,4,8,0,0,8,2,2,2,8,0,0},
    {5,8,0,0,8,3,3,8,0,0,0,8,1,1}};
    \end{tikzpicture}\quad \mapsto\quad  (2,3,2,2,1)$$ 

\smallskip
    
\noindent Here we use the fact that $\displaystyle \sum_{k\geq 0}\binom{n-k}{k}=F_{n}$.
\end{proof}

Next we consider 
\[Q_B:=\{\Lambda(\mcCh^{+}(3)), \Lambda(\mcCh^{-}(3))\}=\{34521, 54123\}\]
and define 
\[\bar b_n:=\#\{\text{$\mcD$ on $[n]$}\ |\ \text{$\mcD$ is fully supported, strongly connected, and avoids $Q_B$}\}.\]
By \Cref{thm:strong_chains}, $\mcD$ avoids $Q_B$ if and only if $\mcD$ avoids connected $3$-chains. 

\begin{proposition}\label{prop:B_calc}
We have that $\bar b_1=1$ and if $n\geq 2$, then $\bar b_n=2^{n-1}-1$.  It follows that
\[\sum_{n\geq 1}\bar b_n\, x^n=\frac{x-2x^2+2x^3}{(1-2x)(1-x)}.\]
\end{proposition}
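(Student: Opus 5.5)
By \Cref{thm:strong_chains}, a strongly connected diagram avoids $Q_B$ exactly when every maximal chain has at most two blocks, i.e. the blocks alternate: $B_1\prec B_2\succ B_3\prec\cdots$ or $B_1\succ B_2\prec B_3\succ\cdots$. So I will count fully supported, strongly connected staircase diagrams on $[n]$ whose cover relations alternate in direction. There are no restrictions on the overlap sizes beyond those in \Cref{def:staircase}.

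The first step is to record which configurations are allowed. Write $B_i=[l_i,r_i]$ with $l_i<l_{i+1}$ and $r_i<r_{i+1}$. Strong connectivity means $l_{i+1}\le r_i$, and full support means $l_1=1$ and $r_k=n$. Condition (2) of \Cref{def:staircase} applies at every interior block, since each one is a local max or a local min. It requires $B_{i-1}\cup B_{i+1}$ to be disconnected, that is, $r_{i-1}+1<l_{i+1}$.

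For a fixed alternation pattern, the data is therefore a sequence
\[1=l_1<l_2\le r_1<l_3\le r_2<\cdots<l_k\le r_{k-1}<r_k=n,\]
where the strict inequalities $r_{i-1}<l_{i+1}-1$ force a gap. The second step is to encode this as a subset of $[2,n-1]$, or as a binary word of length $n-1$ recording, for each $j\in[n-1]$, whether the boundary between $j$ and $j+1$ marks the start of a new block or the end of an old one. Reading the boxes from left to right, each position of $[1,n]$ lies in either one block or two consecutive blocks. The interleaving $l_{i+1}\le r_i<l_{i+2}$ says the covering multiplicity pattern is $1,2,1,2,\ldots$, with each region nonempty except possibly the multiplicity-1 regions strictly inside. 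Condition (2) should be exactly what makes the single-coverage region between two consecutive overlaps nonempty. I will set up a bijection with compositions of the form $(a_1,c_1,a_2,c_2,\ldots,c_{k-1},a_k)$ of $n$, where the $c_i=|B_i\cap B_{i+1}|\ge1$, $a_1,a_k\ge1$ and the inner $a_i\ge1$. Summing over $k$, the number of such compositions of $n$ with $2k-1$ parts is $\binom{n-1}{2k-2}$, so the total over $k\ge2$ is $2^{n-2}-1$. The two choices of alternation direction double this to $2^{n-1}-2$, and the single-block diagram $\{[1,n]\}$ adds one, giving $2^{n-1}-1$ for $n\ge2$. For $n=1$ only the single block exists, so $\bar b_1=1$. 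Before relying on this, I will check it against $n=2$ (blocks $[1,2]$ only, or $\{1\},\{2\}$, which is not strongly connected, so the count is $1$, consistent with $2^{1}-1=1$) and $n=3$ (expected $3$: $[1,3]$, $[1,2]\prec[2,3]$, $[1,2]\succ[2,3]$).

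The main obstacle is verifying that condition (2) translates exactly into the inner single-coverage regions being nonempty, and that no further constraint or degeneracy arises, for example a block contained in the union of its neighbours. I would handle this by checking that any composition produces intervals with strictly increasing endpoints, which follows from $a_i\ge1$. Finally, the generating function is routine: sum $x+\sum_{n\ge2}(2^{n-1}-1)x^n$ and simplify to $\frac{x-2x^2+2x^3}{(1-2x)(1-x)}$.
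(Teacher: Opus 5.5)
Your proposal is correct and follows essentially the same route as the paper, whose two-to-one map $\mcD\mapsto(\gamma_1,\beta_1,\gamma_2,\ldots,\beta_{k-1},\gamma_k)$ onto $(2k-1)$-compositions of $n$ is exactly your $(a_1,c_1,\ldots,c_{k-1},a_k)$ encoding. In both arguments, condition (2) of the staircase definition is what forces the inner single-coverage parts to be positive, and the count is the same $1+2(2^{n-2}-1)=2^{n-1}-1$.
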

\begin{proof}
The fact that $\bar b_1=1$ is clear.  Let $\mcD=\{B_1,\ldots,B_k\}$ be a strongly connected staircase diagram on $[n]$ and let $\alpha_i:=|B_i|$.  If $\mcD$ avoids $Q_B$, then it consists of alternating $2$-chains with non-empty overlaps between blocks. Define $\beta_i=|B_i\cap B_{i+1}|$ for $i<k$ with $\beta_k=0$ and $\gamma_i=\alpha_i-(\beta_i+\beta_{i-1})$ (here we set $\beta_0=0$). If $k>1$, then the map 
$$\mcD\rightarrow (\gamma_1,\beta_1,\gamma_2,\beta_2\ldots,\gamma_k)$$
is a two-to-one map between strongly connected staircase diagrams on $[n]$ with $k$ blocks that avoid $Q_B$ and $(2k-1)$-compositions of $n$.  Summing over $k$ gives $\bar b_n=2^{n-1}-1$.  For example:

$$\begin{tikzpicture}[scale=0.35,baseline={(0,0.25)}]
    \ppsansnumbers{15}{
    {0,4,4,4,4,4,0,2,2,2,2,2,2,0},
    {5,5,0,0,3,3,3,3,3,3,0,0,1,1}};
    \draw[dashed,red, thick] (0,-1)--(0,3);
    \draw[dashed,red,thick] (-1,-1)--(-1,3);
    \draw[dashed,red,thick] (-3,-1)--(-3,3);
    \draw[dashed,red,thick] (-5,-1)--(-5,3);
    \draw[dashed,red,thick] (-6,-1)--(-6,3);
    \draw[dashed,red,thick] (-9,-1)--(-9,3);    
    \draw[dashed,red,thick] (-11,-1)--(-11,3);    
    \draw[dashed,red,thick] (-12,-1)--(-12,3);
    \end{tikzpicture}\quad \mapsto\quad  (1,1,2,3,1,2,2,1,1)$$    
Note that the number of compositions of $n$ with an odd number of parts is $2^{n-2}$.
\end{proof}

Finally, let 
\[Q_C:=\{\Lambda(\mcOv^{+}(2)), \Lambda(\mcOv^{-}(2))\}=\{45321, 54312\}\]
and define 
\[c^+_n:=\#\left\{\text{$\mcD$ on $[n]$}\ \Big| \ \substack{\displaystyle\mcD\ \text{is a fully supported, strongly connected}\\ \displaystyle \text{increasing chain and avoids}\ Q_C}\right\}.\]
\Cref{thm:overlaps} implies $\mcD$ avoids $Q_C$ if and only if $\mcD$ avoids 2-overlaps.

\begin{proposition}\label{prop:C_calc}
We have $\bar c_1^+=1$ and if $n\geq 2$, then $\bar c_n^+=2^{n-2}$.  It follows that
\[\sum_{n\geq 1}\bar c_n^+\, x^n= \frac{x-x^2}{1-2x}.\]

\end{proposition}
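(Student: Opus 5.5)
We count fully supported, strongly connected increasing chains $\mcD=\{B_1\prec\cdots\prec B_k\}$ on $[n]$ that avoid $Q_C$. By \Cref{thm:overlaps} (with $q=2$), avoiding $Q_C$ is the same as avoiding $2$-overlaps. Strong connectivity forces $|B_i\cap B_{i+1}|\geq 1$, so avoidance forces every overlap to be exactly $1$. The plan is to encode such chains as compositions and count those.

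First, the case $n=1$. The only fully supported diagram on $[1]$ is $\{[1,1]\}$, a single block, which we regard as a (trivial) increasing chain, so $\bar c_1^+=1$. For $n\geq 2$, a chain with $k\geq 1$ blocks and all overlaps equal to $1$ is determined by its block sizes $\alpha_i:=|B_i|$. The conditions $l_i<l_{i+1}$ and $r_i<r_{i+1}$ together with overlap $1$ force $\alpha_i\geq 2$ for the interior blocks $1<i<k$. Indeed, if an interior block were a singleton $\{j\}$, then $B_{i-1}\cap B_i=B_i\cap B_{i+1}=\{j\}$, contradicting $r_{i-1}<r_i$, since then $r_{i-1}=j=r_i$. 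The end blocks need $\alpha_1,\alpha_k\geq 1$, but if $k\geq 2$ they must also satisfy $\alpha_1,\alpha_k\geq 2$. Otherwise, for example, $B_1=\{j\}$ would force $B_1\subseteq B_2$ and then $l_1<l_2$ fails.

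So every block has size at least $2$ once $k\geq 2$. The total support is $n=\sum\alpha_i-(k-1)$. We also need to confirm that the staircase axioms of \Cref{def:staircase} impose nothing further on a chain. Condition (2) only concerns local maxima and minima, and a chain has none in its interior. So any such size sequence gives a valid increasing chain.

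Next, we set up the bijection. Send $\mcD$ to $(\alpha_1-1,\alpha_2-1,\ldots,\alpha_k-1)$, a composition of $\sum\alpha_i-k=n-1$ with all parts $\geq 1$. This is injective. It is also surjective onto compositions of $n-1$ when $n\geq 2$: a single part $n-1$ corresponds to the one-block chain $\{[1,n]\}$, and $k\geq 2$ parts correspond to blocks of size at least $2$, as required above. Hence $\bar c^+_n=2^{n-2}$ for $n\geq 2$, since compositions of $n-1$ number $2^{n-2}$.

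Finally, sum the series: $x+\sum_{n\geq 2}2^{n-2}x^n=x+\frac{x^2}{1-2x}=\frac{x-x^2}{1-2x}$. The main care point is the boundary bookkeeping. We must confirm that $k=1$ is counted as an increasing chain (consistent with its use in \Cref{thm:connected2chains_gf}, where $2z_n^+-1$ counts the single-block chain once) and that the end-block size constraints match exactly.
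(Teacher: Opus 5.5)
Your proof is correct and matches the paper's: both reduce avoidance of $Q_C$ to all overlaps being exactly $1$, and both use the bijection $\mcD\mapsto(\alpha_1-1,\ldots,\alpha_k-1)$ onto compositions of $n-1$. Your added checks fill in details the paper leaves implicit: every block has size at least $2$ once $k\geq 2$, the staircase axioms impose nothing more on a chain, and the one-block chain is counted.
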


\begin{proof}
The fact that $\bar c_1^+=1$ is clear.  Let $\mcD=\{B_1,\ldots,B_k\}$ be a strongly connected increasing chain and let $\alpha_i:=|B_i|$.  If $\mcD$ avoids $Q_C$, then there are $1$-overlaps between all blocks in $\mcD$.  The map
\[\mcD\mapsto (\alpha_1-1,\ldots, \alpha_k-1)\] is a bijection between strongly connected, increasing chaings on $[n]$ with $k$ blocks that avoid $Q_C$ and $k$-compositions $(n-1)$.  Summing over $k$ gives $\bar c_n^+=2^{n-2}$.  For example:
$$\begin{tikzpicture}[scale=0.35,baseline={(0,0.3)}]
    \ppsansnumbers{15}{
    {0,0,0,0,0,0,0,0,1,1,8},
    {0,0,0,0,0,2,2,2,8,0,0},
    {0,0,3,3,3,8,0,0,0,0,0},
    {0,4,8,0,0,0,0,0,0,0,0},};
    \draw[dashed,red,thick] (-1,1)--(-1,5);
    \draw[dashed,red,thick] (-4,0)--(-4,4);
    \draw[dashed,red,thick] (-7,-1)--(-7,3);    
    \end{tikzpicture}\quad \mapsto\quad  (2,3,3,1)$$
\end{proof}

\begin{proof}[Proof of \Cref{thm:polished_GF,thm:semi-polished_GF12}]
Apply \Cref{thm:str_conn_2_all_SD} and \Cref{thm:connected2chains_gf} to \Cref{prop:P_calc}, \Cref{prop:B_calc}, and \Cref{prop:C_calc}.
\end{proof}

\subsection{Increasing chains and Calatan objects}
We conclude this section by showing how enumerating strongly connected increasing chains is linked to the enumeration of certain sub-collections of Catalan objects.  Recall that a Dyck path on $n$ is a path from $(0,0)$ to $(2n,0)$ corresponding to a sequence of ``up-steps" given by the vector $\langle1,1\rangle$ and ``down-steps" given by vector $\langle1,-1\rangle$ where in every initial segment there are at least as many up-steps as down-steps.  In other words, each Dyck path corresponds to a ballot word of $n$ $U$'s (for up) and $n$ $D$'s (for down).  It is well known that the number of Dyck paths on $n$ is the $n$-th Catalan number $C_n=\frac{1}{n+1}\binom{2n}{n}$.  

We associate a Dyck path to a fully supported, increasing chain on $[n]$ as follows.  Let $\mcD=\{B_1,\ldots,B_k\}$ and to each block $B_i$, we define the sequence 
\[M_i=U^{|B_i\setminus B_{i-1}|}D^{|B_i\setminus B_{i+1}|}\]
where $U^aD^b$ denotes $a$ up-steps, followed by $b$ down-steps.  Here we set $B_0=B_{k+1}=\varnothing$.  Define 
\[\DyckP(\mcD):=M_1\cdots M_k.\]
It is easy to check that $\DyckP(\mcD)$ forms a well-defined ballot word with exactly $n$ $U$'s and $n$ $D$'s and hence gives a Dyck path.

\begin{example}\label{ex:Dyck_path1}
Let $\mcD=\{[1,4]\prec[3,5]\prec[5,7]\}$.  Then
\[\DyckP(\mcD)=(U^4D^2)(UD^2)(U^2D^3)\]
is a Dyck path from $(0,0)$ to $(14,0)$.  Diagrammatically, we have
$$
\begin{tikzpicture}[scale=0.5,baseline={(0,0.7)}]
            \ppsansnumbers{12}{ 
            {0,0,0,1,1,1,1},
            {0,0,3,3,3,0,0},
            {5,5,5,0,0,0,0}};
    \draw (-4,-0.5)node {$4$};
    \draw (-1.5,0.5)node {$1$};
    \draw (0,1.5)node {$2$};
    \draw (-5.25,1.75)node {$2$};
    \draw (-3.25,2.75)node {$2$};
    \draw (-0.75,3.75)node {$3$};
    \end{tikzpicture}\quad\xrightarrow[\DyckP]\qquad\
\begin{tikzpicture}[scale=0.4,baseline={(0,0.75)}]
\draw[step=1.0,black!50] (0,0) grid (14,5);
\fill (0,0) circle (5pt);
\fill (4,4) circle (5pt);
\fill (6,2) circle (5pt);
\fill (7,3) circle (5pt);
\fill (9,1) circle (5pt);
\fill (11,3) circle (5pt);
\fill (14,0) circle (5pt);
\draw[thick, line width = 0.5mm] (0,0) -- (4,4) -- (6,2) -- (7,3) -- (9,1) -- (11,3)-- (14,0);
\end{tikzpicture}$$
\end{example}

\noindent The following is proved in the proof of \cite[Proposition 4.1]{RS18}.

\begin{proposition}
The map $\DyckP$ is a bijection between fully supported staircase diagrams and Dyck paths on $n$.
\end{proposition}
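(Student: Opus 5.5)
The plan is to prove the bijection by writing $\DyckP(\mcD)$ explicitly in terms of the endpoints of the blocks, and then inverting that description. First observe that $\DyckP(\mcD)=M_1\cdots M_k$ only uses the intervals $B_1,\ldots,B_k$, not the partial order. So the map should be read on the underlying block sequence. This is harmless. If $\{B_1,\ldots,B_k\}$ is a set of intervals with $l_i<l_{i+1}$, $r_i<r_{i+1}$ and $\bigcup B_i=[n]$, then $l_{i+1}\leq r_i+1$. Hence each $B_i\cup B_{i+1}$ is connected, and the increasing order $B_1\prec\cdots\prec B_k$ satisfies \Cref{def:staircase}: condition (1) holds by connectivity, and condition (2) is vacuous for a chain. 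Thus fully supported block sequences on $[n]$ are identified with fully supported (increasing) staircase diagrams on $[n]$, and I will prove that $\DyckP$ is a bijection from this set onto Dyck paths of semilength $n$.

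Step 1 computes the exponents. Write $B_i=[l_i,r_i]$, with $r_0:=0$ and $l_{k+1}:=n+1$. Full support gives $r_{i-1}\geq l_i-1$. Therefore
\[a_i:=|B_i\setminus B_{i-1}|=r_i-r_{i-1}\geq 1\quad\text{and}\quad b_i:=|B_i\setminus B_{i+1}|=l_{i+1}-l_i\geq 1.\]
These sums telescope: $\sum a_i=r_k=n$ and $\sum b_i=l_{k+1}-l_1=n$. After the prefix $M_1\cdots M_{i-1}U^{a_i}$ the height is $r_i-l_i+1\geq 1$, and after $M_1\cdots M_i$ it is $r_i-l_{i+1}+1\geq 0$. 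The second inequality is exactly the connectivity $l_{i+1}\leq r_i+1$. Since heights change monotonically within each run, the ballot condition holds everywhere, so $\DyckP(\mcD)$ is a Dyck path. Note that it has exactly $k$ peaks.

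Step 2 constructs the inverse. Any nonempty Dyck path factors uniquely as $U^{a_1}D^{b_1}\cdots U^{a_k}D^{b_k}$ with all $a_i,b_i\geq 1$, by splitting at its peaks and valleys. Set
\[r_i:=a_1+\cdots+a_i\quad\text{and}\quad l_i:=1+b_1+\cdots+b_{i-1}.\]
Then $l_i<l_{i+1}$ and $r_i<r_{i+1}$. We have $l_i\leq r_i$ because the height after $U^{a_i}$ is at least $1$, and $l_{i+1}\leq r_i+1$ because each valley has height at least $0$. Also $l_1=1$ and $r_k=n$. Hence $\bigcup[l_i,r_i]=[n]$, and we obtain a fully supported block sequence. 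The formulas of Step 1 show that the two constructions are mutually inverse: the exponent sequence determines all the endpoints, and the endpoints determine the exponents.

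The main obstacle is the bookkeeping in Step 1, namely showing $|B_i\setminus B_{i-1}|=r_i-r_{i-1}$. This identity uses $r_{i-1}\geq l_i-1$, which requires full support, so it would fail for diagrams with gaps in their support. I would verify it together with the height formula $r_i-l_{i+1}+1$, since together these give both the ballot condition and the injectivity.
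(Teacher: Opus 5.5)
Your proof is correct and complete. The paper gives no argument of its own at this point; it defers to the proof of \cite[Proposition 4.1]{RS18}, so your write-up is a self-contained replacement for that citation.

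The identities $|B_i\setminus B_{i-1}|=r_i-r_{i-1}$ and $|B_i\setminus B_{i+1}|=l_{i+1}-l_i$ both rest on $l_{i+1}\leq r_i+1$, which comes from full support. Given these, the inverse is immediate: you recover the block endpoints as partial sums of the run lengths. Your height computations also carry extra content. The identities $r_i-l_i+1=|B_i|$ and $r_i-l_{i+1}+1=|B_i\cap B_{i+1}|$ are exactly \Cref{lem:Dyck_height_valleys}, which the paper leaves as an exercise. So your Step 1 proves that lemma as well. The citation buys brevity; your route buys a proof that can be checked within the paper.

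Your opening reduction to increasing chains is not merely harmless; it is necessary. The paper defines $\DyckP$ only on fully supported increasing chains, and the next proposition uses it only there. Over all fully supported staircase diagrams the map cannot be injective. For $n=2$ there are three diagrams, $\{[1,2]\}$, $\{\{1\}\prec\{2\}\}$ and $\{\{1\}\succ\{2\}\}$, but only two Dyck paths, $UUDD$ and $UDUD$. The statement should therefore be read as you did, with ``fully supported increasing chains'' in place of ``fully supported staircase diagrams''. It would be cleaner to say this explicitly rather than calling the reinterpretation harmless.
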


\noindent Several combinatorial properties of staircase diagrams can be translated to properties of Dyck paths.  Given a Dyck path $P$, a \textbf{peak} is an occurrence of the sequence $UD$ in the assoicated ballot word of $P$, and a \textbf{valley} is an occurrence of $DU$.  We say the \textbf{height of the peak} in $P$ is the vertical distance of the peak from the $x$-axis in the path $P$.  We similarly define the \textbf{height of a valley} in $P$.  In \Cref{ex:Dyck_path1}, there are three peaks of respective heights 4,3,3, and two valleys of respective heights 2,1.  We leave the proof of the next lemma as an exercise.

\begin{lemma}\label{lem:Dyck_height_valleys}
    Let $\mcD=\{B_1,\ldots,B_k\}$ be a fully supported, increasing chain on $[n]$.  Then the following are true:
    \begin{enumerate}
        \item $\DyckP(\mcD)$ has $k$ peaks, one corresponding to each block $B_i$.  Moreover, the height of the peak associated to $B_i$ is $|B_i|$.
        \item $\DyckP(\mcD)$ has $k-1$ valleys, one corresponding to each pair $(B_i,B_{i+1})$.  Moreover, the height of the valley associated to $(B_i,B_{i+1})$ is $|B_i\cap B_{i+1}|$.
    \end{enumerate}
\end{lemma}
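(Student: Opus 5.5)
The plan is to read the statement off directly from the definition $\DyckP(\mcD)=M_1\cdots M_k$ with $M_i=U^{a_i}D^{b_i}$, where $a_i=|B_i\setminus B_{i-1}|$ and $b_i=|B_i\setminus B_{i+1}|$. The first step is to check that every exponent is positive. Since $\mcD$ is fully supported on $[n]$, we have $B_i=[l_i,r_i]$ with $l_1<l_2<\cdots<l_k$ and $r_1<\cdots<r_k$, and $B_0=B_{k+1}=\varnothing$. Strict monotonicity of the left endpoints gives $l_i\notin B_{i-1}$, so $a_i\geq 1$. Strict monotonicity of the right endpoints gives $r_i\notin B_{i+1}$, so $b_i\geq 1$. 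Consequently the word is a concatenation of $k$ nonempty maximal runs of $U$'s, each followed by a nonempty run of $D$'s. It therefore contains exactly $k$ occurrences of $UD$, one inside each $M_i$, and exactly $k-1$ occurrences of $DU$, one at each junction $M_iM_{i+1}$. This gives the counting parts of (1) and (2) and the correspondence of peaks to blocks and valleys to adjacent pairs.

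For the heights, compute the height $h_i$ after $M_1\cdots M_i$. Since the blocks are intervals with increasing endpoints, $B_i\setminus B_{i+1}=[l_i,\min(r_i,l_{i+1}-1)]$ and $B_{i+1}\setminus B_i=[\max(l_{i+1},r_i+1),r_{i+1}]$. Hence
\[
a_{i+1}=|B_{i+1}|-|B_i\cap B_{i+1}|\qquad\text{and}\qquad b_i=|B_i|-|B_i\cap B_{i+1}|.
\]
We then claim by induction that $h_i=|B_i\cap B_{i+1}|$, with $h_0=0$ and $h_k=0$.
\begin{itemize}
\item The peak of $B_{i+1}$ is at height $h_i+a_{i+1}=|B_{i+1}|$.
\item Afterwards, $h_{i+1}=|B_{i+1}|-b_{i+1}=|B_{i+1}\cap B_{i+2}|$.
\end{itemize}
This proves both height claims. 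The valley at the junction of $M_i$ and $M_{i+1}$ has height $h_i=|B_i\cap B_{i+1}|$, and it is nonnegative, which is consistent with the path being a Dyck path.

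The only point needing care is the interval arithmetic expressing $|B_i\setminus B_{i\pm1}|$ through the overlap. Here one must handle the case $B_i\cap B_{i+1}=\varnothing$ (the formula still holds with overlap $0$). One must also use full support to ensure consecutive blocks leave no gap, so that no extra flat or zero-height step appears; if there were a gap, $[n]$ would not be covered and the total number of $U$'s would fall short of $n$. Note that the increasing-chain hypothesis plays no role in the computation itself; it only matters for the bijection with staircase diagrams.
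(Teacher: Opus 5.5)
Your overall route is the natural one, and the paper leaves this lemma as an exercise. You read the run structure of $\DyckP(\mcD)=U^{a_1}D^{b_1}\cdots U^{a_k}D^{b_k}$ off the definition, then track heights using $a_{i+1}=|B_{i+1}|-|B_i\cap B_{i+1}|$ and $b_i=|B_i|-|B_i\cap B_{i+1}|$. The height induction is correct.

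\textbf{The positivity step needs fixing.} The step showing $a_i,b_i\geq 1$ is what makes the counts of $k$ peaks and $k-1$ valleys work, and as written its justification is false: you have the endpoint witnesses swapped. Strict monotonicity $l_{i-1}<l_i$ does not give $l_i\notin B_{i-1}$, and $r_i<r_{i+1}$ does not give $r_i\notin B_{i+1}$. For $\{[1,3]\prec[2,4]\}$ one has $l_2=2\in B_1$ and $r_1=3\in B_2$. The correct witnesses are:
\begin{itemize}
\item $r_i\in B_i\setminus B_{i-1}$, since $r_{i-1}<r_i$ (or $B_0=\varnothing$);
\item $l_i\in B_i\setminus B_{i+1}$, since $l_i<l_{i+1}$ (or $B_{k+1}=\varnothing$).
\end{itemize}
These give $a_i,b_i\geq 1$. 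Without this, adjacent runs could merge: for example, $a_i=0$ would fuse $D^{b_{i-1}}D^{b_i}$ and lose a peak and a valley. So the step must be repaired, though the repair is one line.

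\textbf{Two smaller remarks.}
\begin{itemize}
\item The interval arithmetic you flag as the delicate point is unnecessary. The identity $|B\setminus C|=|B|-|B\cap C|$ holds for arbitrary finite sets, and this is all your height computation uses.
\item The comment about an ``extra flat or zero-height step'' is spurious, since the word consists only of $U$ and $D$ steps. Full support is used only to guarantee exactly $n$ up-steps, which is not among the lemma's claims. In any case, a gap between consecutive blocks cannot occur in a chain, because comparable adjacent blocks must have connected union.
\end{itemize}
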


The next proposition follows directly from \Cref{lem:Dyck_height_valleys}.

\begin{proposition}\label{prop:counting_fs_inc_Catalan}
    Let $\mcD=\{B_1,\ldots,B_k\}$ be a fully supported, increasing chain on $[n]$.  Then $\mcD$ is strongly connected if and only if the height of each valley in $\DyckP(\mcD)$ is at least one.  

    As a consequence, the total number of fully supported, strongly connected, increasing chains on $[n]$ is the Catalan number $C_{n-1}$.
\end{proposition}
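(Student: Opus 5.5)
The plan is to read both parts off \Cref{lem:Dyck_height_valleys} and then count Dyck paths with a standard first-return argument.

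For the equivalence, let $\mcD=\{B_1,\ldots,B_k\}$ be a fully supported increasing chain. By definition, $\mcD$ is strongly connected exactly when $B_i\cap B_{i+1}\neq\varnothing$ for every $i<k$. By \Cref{lem:Dyck_height_valleys} part (2), the valleys of $\DyckP(\mcD)$ are in bijection with the consecutive pairs $(B_i,B_{i+1})$, and the valley for $(B_i,B_{i+1})$ has height $|B_i\cap B_{i+1}|$. So every overlap is nonempty if and only if every valley has height at least one. If $k=1$ there are no valleys, and the statement holds vacuously on both sides.

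For the count, I would restrict the bijection $\DyckP$ (from the preceding proposition) to increasing chains. This step needs care, because $\DyckP$ is stated as a bijection on all fully supported staircase diagrams, yet its formula $M_1\cdots M_k$ ignores the order relations. I would therefore read that proposition as saying that, for a fixed orientation, fully supported increasing chains on $[n]$ correspond bijectively to Dyck paths of semilength $n$. This is justified by the construction in \cite[Proposition 4.1]{RS18}: the block sizes and overlaps $(|B_i|, |B_i\cap B_{i+1}|)$ are exactly the peak and valley heights, and these determine the path.

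Granting this, the fully supported strongly connected increasing chains on $[n]$ correspond to Dyck paths of semilength $n$ with no valley at height $0$. Such a path cannot return to the $x$-axis before its final step. Hence it has the form $U P' D$, where $P'$ is an arbitrary Dyck path of semilength $n-1$. Conversely, every path of this form has all of its valleys at height at least one. Therefore the count is $C_{n-1}$. As a check, $n=1$ gives the single chain $\{[1,1]\}$ and $C_0=1$.

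The only real obstacle is the restriction of $\DyckP$ to increasing chains: confirming that it is injective and surjective onto all Dyck paths once the orientation is fixed. Injectivity holds because peak and valley heights, together with full support, recover the blocks. Surjectivity holds because any sequence of peaks and valleys of a Dyck path gives intervals $B_i$ with strictly increasing left and right endpoints, and these satisfy \Cref{def:staircase} when ordered as an increasing chain.
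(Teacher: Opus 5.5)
Your proposal is correct and follows the same route as the paper, which simply says the result follows from \Cref{lem:Dyck_height_valleys}: part (2) gives the equivalence, and the count comes from the $\DyckP$ bijection together with the fact that Dyck paths with no valley at height $0$ are exactly the paths $UP'D$, which gives $C_{n-1}$. You are right that the bijection should be read as being on fully supported increasing chains, not all fully supported staircase diagrams, and your injectivity and surjectivity sketch is a sound supplement to the paper's citation of \cite{RS18}.
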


Note that \Cref{thm:smooth_enumeration} can be recovered by setting $Z^+(x)$ equal to the generating function for Catalan numbers and applying  \Cref{thm:str_conn_2_all_SD}, \Cref{thm:connected2chains_gf}.  We remark that the method of using staircase diagrams to compute the generating function for smooth permutation appears in \cite{RS17} and \cite{RS18}.

We give an outline of how to calculate the generating function of the number of smooth permutations that avoid $\Lambda(\mcCh_\ell^{\pm})$ for $\ell\geq 2$, and separately avoid $\Lambda(\mcOv_q^{\pm})$ for $q\geq 2$.  For $\ell\geq 2$, define \[Q_{B,\ell}:=\{\Lambda(\mcCh^{+}(\ell)), \Lambda(\mcCh^{-}(\ell))\}\]
and let 

\[b^+_{\ell,n}:=\#\left\{\text{$\mcD$ on $[n]$}\ \Big| \ \substack{\displaystyle\mcD\ \text{is a fully supported, strongly connected}\\ \displaystyle \text{increasing chain and avoids}\ Q_{B,\ell}}\right\}.\]

\begin{proposition}\label{prop:Narayana}
Fix $\ell\geq 2$ and let $\mcD=\{B_1,\ldots,B_k\}$ be a fully supported, strongly connected increasing chain on $[n]$.  Then $\mcD$ avoids $Q_{B,\ell}$ if and only if $\DyckP(\mcD)$ has at most $\ell-1$ peaks and the height of each valley is at least one.
In particular, \[b^+_{\ell,n}=\sum_{i=1}^{\ell-1} N(i,n-1)\]
where $\displaystyle N(n,i)=\frac{1}{i}\binom{n}{i}\binom{n}{i-1}$ is the $(n,i)$-Narayana number.
\end{proposition}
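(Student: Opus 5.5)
By \Cref{thm:strong_chains}, a staircase diagram avoids $Q_{B,\ell}$ exactly when it contains no strongly connected $\ell$-chain (increasing or decreasing). So the plan is to translate this condition into a condition on $\DyckP(\mcD)$ and then count.

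First, the characterization. Let $\mcD=\{B_1\prec\cdots\prec B_k\}$ be a fully supported, strongly connected increasing chain. It contains no decreasing $2$-chain, so a decreasing strongly connected $\ell$-chain could only occur if $\ell=1$, which is excluded. Every set of $\ell$ consecutive blocks of $\mcD$ is a strongly connected increasing $\ell$-chain, because consecutive blocks intersect. Hence $\mcD$ contains a strongly connected $\ell$-chain if and only if $k\geq \ell$. Thus $\mcD$ avoids $Q_{B,\ell}$ if and only if $k\leq \ell-1$. By \Cref{lem:Dyck_height_valleys} part (1), $k$ is the number of peaks of $\DyckP(\mcD)$. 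By \Cref{prop:counting_fs_inc_Catalan}, strong connectivity is equivalent to every valley having height at least one. This proves the first assertion.

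Second, the count. I need the number of Dyck paths of semilength $n$ with exactly $i$ peaks and all valleys of height at least one. Such a path never returns to the $x$-axis except at its two endpoints. Remove its first $U$ and last $D$; this is a bijection onto all Dyck paths of semilength $n-1$. The bijection preserves the number of peaks, provided $n\geq 2$ so that the resulting path is nonempty; a single peak $UD$ stays a single peak after removal as long as the inner path is nonempty. The number of Dyck paths of semilength $n-1$ with exactly $i$ peaks is the Narayana number $\frac{1}{i}\binom{n-1}{i}\binom{n-1}{i-1}$. Summing over $1\leq i\leq \ell-1$ gives $b^+_{\ell,n}$. Note that the paper's notation $N(i,n-1)$ versus $N(n,i)$ has its arguments swapped; I will state the count as the number of paths of semilength $n-1$ with $i$ peaks, to avoid confusion.

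The main care points are small. One is the edge case $n=1$: there the only chain is the single block $[1,1]$, with $k=1$, and the formula should be checked separately or handled by convention. The other is the degenerate $k=1$ chain, which is trivially strongly connected and has no valleys. Otherwise the argument is routine.
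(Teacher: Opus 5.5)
Your argument is correct and follows the paper's route. The paper's proof only cites \Cref{lem:Dyck_height_valleys}, \Cref{prop:counting_fs_inc_Catalan}, and the peak interpretation of Narayana numbers. You spell out the steps it leaves implicit: \Cref{thm:strong_chains} reduces avoidance of $Q_{B,\ell}$ to $k\le \ell-1$. The bijectivity of $\DyckP$ turns the count into a count of Dyck paths of semilength $n$ with no interior return to the axis. Deleting the first $U$ and last $D$ then maps these to all Dyck paths of semilength $n-1$, preserving peaks when $n\ge 2$.

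One correction. The closed form you quote, $\frac{1}{i}\binom{n-1}{i}\binom{n-1}{i-1}$, is not the number of Dyck paths of semilength $n-1$ with $i$ peaks. For semilength $3$ and $i=2$ it gives $9/2$, while the true count is $3$. The Narayana number for semilength $m$ and $i$ peaks is $\frac{1}{m}\binom{m}{i}\binom{m}{i-1}$. Your conclusion should therefore read $b^+_{\ell,n}=\sum_{i=1}^{\ell-1}\frac{1}{n-1}\binom{n-1}{i}\binom{n-1}{i-1}$ for $n\ge 2$. The statement's displayed definition of $N(n,i)$ has the same typo.

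Your $n=1$ remark is a genuine exception, not something a convention fixes: $b^+_{\ell,1}=1$, but no path of semilength $0$ has a peak. The identity should be restricted to $n\ge 2$, matching how \Cref{prop:C_calc} handles $n=1$ separately.
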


\begin{proof}
The Narayana number $N(n,i)$ counts the number of Dyck paths on $n$ with exactly $i$ peaks.  The proposition follows from \Cref{lem:Dyck_height_valleys} and \Cref{prop:counting_fs_inc_Catalan}.
\end{proof}

\begin{corollary}\label{cor:chain_rational}
    For $\ell\geq 2$, the generating function for the number of smooth permutations avoiding $Q_{B,\ell}$ is  rational.
\end{corollary}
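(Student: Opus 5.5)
By \Cref{cor:GF_summary} part (2), it suffices to show that $Z^+_{Q_{B,\ell}}(x)=\sum_{n\geq 1} b^+_{\ell,n}x^n$ is rational. This corollary applies because $Q_{B,\ell}$ is inverse invariant, since $\Lambda(\mcCh^-_\ell)=\Lambda(\mcCh^+_\ell)^{-1}$ by \Cref{lem:chain_containment}. It is also chain indecomposable: \Cref{thm:strong_chains} says avoidance is the combinatorial condition ``no strongly connected $\ell$-chain,'' which is detected on chains and is invariant on the fibers $\mcFr^{-1}(\mcBr^{-1}(\mcE))$, as remarked after \Cref{lem:chain-indecomp-smooth}. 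Once $Z^+$ is rational, $\overline{Z}_{Q_{B,\ell}}$ is rational by \Cref{thm:connected2chains_gf}, and then $Z_{Q_{B,\ell}}$ is rational by \Cref{thm:str_conn_2_all_SD}. Both formulas are rational expressions in their input.

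To show $Z^+$ is rational, I would use \Cref{prop:Narayana}: $b^+_{\ell,n}=\sum_{i=1}^{\ell-1}N(n-1,i)$, the number of Dyck paths of semilength $n-1$ with exactly $i$ peaks. So it suffices to show that for each fixed $i$, the generating function $G_i(x)=\sum_{m\geq 1}N(m,i)x^m$ is rational; a finite sum of rational functions, shifted by a factor of $x$, is rational. For fixed $i$, $N(m,i)=\frac1i\binom{m}{i}\binom{m}{i-1}$ is a polynomial in $m$ of degree $2i-1$. The generating function of a polynomial sequence $P(m)$ of degree $d$ is of the form $R(x)/(1-x)^{d+1}$ with $R$ a polynomial, using $\sum_m \binom{m}{j}x^m = x^j/(1-x)^{j+1}$ and expanding $P$ in the binomial basis. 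Hence $G_i(x)=R_i(x)/(1-x)^{2i}$ is rational. Alternatively, one can cite the known closed form for Narayana generating functions with a fixed number of peaks, $\sum_m N(m,i)x^m = x^i\sum_{j}\binom{i-1}{j}\binom{i}{j}x^j/(1-x)^{2i-1}$ up to indexing.

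The main obstacle is not the rationality itself. It is checking carefully that the hypotheses of \Cref{cor:GF_summary}(2) hold, in particular condition (2) of \Cref{def:chain-indecomposable} for $\Lambda(\mcCh^\pm_\ell)$. I would verify it directly. Enlarging the first block (the $\mcFr^{-1}$ direction) or the last block (the $\mcBr^{-1}$ direction) of a strongly connected chain changes neither the number of blocks in the chain nor strong connectivity. By \Cref{thm:strong_chains}, it therefore cannot create a strongly connected $\ell$-chain. The index conventions ($N(n,i)$ versus $N(i,n)$, and the shift $n-1$) only affect the explicit formula, not rationality.
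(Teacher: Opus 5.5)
Your proposal is correct and follows the paper's proof. Both reduce via \Cref{cor:GF_summary}(2) and \Cref{prop:Narayana} to the rationality of $\sum_{m} N(m,i)x^m$ for each fixed $i$; the paper then cites a closed form, while you observe directly that $N(m,i)$ is a polynomial in $m$, which is a valid and more self-contained finish. One small correction, inherited from a typo in the paper: the Narayana number is $N(m,i)=\frac{1}{m}\binom{m}{i}\binom{m}{i-1}=\frac{1}{i}\binom{m-1}{i-1}\binom{m}{i-1}$, a polynomial of degree $2i-2$ rather than $2i-1$ (so the denominator is $(1-x)^{2i-1}$), and this does not affect your conclusion.
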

\begin{proof}
    By \Cref{cor:GF_summary} and \Cref{prop:Narayana}, it suffices to show that the generating function $\sum_{n\geq i} N(i,n)\, x^n$ is rational for a fixed integer $i$.  However, it is known that 
    \[\sum_{n\geq i} N(n,i)\, x^n=\frac{x^i\sum_{j=1}^i N(i,j)\, x^{j-1}}{(1-x)^{2i-1}}.\]  
    This formula is stated in \cite[Example 40]{Ba06} (see also \cite[Lemma 3.3]{GAO2016}).  We remark that the sum in the numerator is called the $i$-th Narayana polynomial.
\end{proof}

For $q\geq 2$, define \[Q_{C,\ell}:=\{\Lambda(\mcOv^{+}(q)), \Lambda(\mcOv^{-}(q))\}\]
and let 

\[c^+_{q,n}:=\#\left\{\text{$\mcD$ on $[n]$}\ \Big| \ \substack{\displaystyle\mcD\ \text{is a fully supported, strongly connected}\\ \displaystyle \text{increasing chain and avoids}\ Q_{C,q}}\right\}.\]
\begin{proposition}\label{prop:bounded_valleys}
Fix $q\geq 2$ and let $\mcD=\{B_1,\ldots,B_k\}$ be a fully supported, strongly connected increasing chain on $[n]$.  Then $\mcD$ avoids $Q_{C,q}$ if and only if $\DyckP(\mcD)$ the height of each valley is at least one and less than $q$.  In particular,
\[c^+_{q,n}=f_{q-1,n-1}\]
where $f_{q,n}$ is the number of Dyck paths on $n$ whose valley heights are less than $q$.
\end{proposition}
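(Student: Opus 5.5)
The plan is to translate the pattern condition into a condition on overlaps using \Cref{thm:overlaps}, and then read the overlaps off the Dyck path using \Cref{lem:Dyck_height_valleys}.

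First, by \Cref{thm:overlaps} (with $q\geq 2\geq 1$), $\Lambda(\mcD)$ avoids $Q_{C,q}=\{\Lambda(\mcOv^{\pm}_q)\}$ if and only if $\mcD$ avoids $q$-overlaps. This means $|B_i\cap B_{i+1}|<q$ for every $1\leq i<k$. Since $\mcD$ is a fully supported increasing chain, \Cref{lem:Dyck_height_valleys} part (2) says the valleys of $\DyckP(\mcD)$ correspond bijectively to the consecutive pairs $(B_i,B_{i+1})$, with valley height $|B_i\cap B_{i+1}|$. Thus avoiding $q$-overlaps is equivalent to every valley having height less than $q$. Strong connectivity is equivalent to every valley having height at least one, by \Cref{prop:counting_fs_inc_Catalan}. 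Together these give the first assertion.

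For the count, $\DyckP$ is a bijection from fully supported increasing chains on $[n]$ to Dyck paths on $n$. So $c^+_{q,n}$ equals the number of Dyck paths on $n$ all of whose valleys have height in $[1,q-1]$. I would biject these with Dyck paths on $n-1$ whose valley heights lie in $[0,q-2]$.

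Take a Dyck path on $n$ with no valley at height $0$. Such a path is primitive: it returns to the axis only at the end, since any intermediate return $DU$ at height $0$ would be a valley of height $0$. It therefore has the form $U P' D$ with $P'$ a Dyck path on $n-1$. The valleys of $P$ are exactly those of $P'$, shifted up by one in height. No new valleys arise at the junctions, because the initial $U$ is preceded by nothing and the final $D$ is followed by nothing. Hence valley heights of $P$ in $[1,q-1]$ correspond to valley heights of $P'$ in $[0,q-2]$, i.e. less than $q-1$. So the count is $f_{q-1,n-1}$.

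The main point needing care is this boundary check that the decomposition $P=UP'D$ creates or destroys no valleys. It also covers the degenerate case $n=1$ ($P=UD$, $P'$ empty), which is consistent since both sides equal $1$.
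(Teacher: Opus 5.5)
Your proof is correct and takes essentially the paper's route. The paper's proof just cites \Cref{lem:Dyck_height_valleys} and \Cref{prop:counting_fs_inc_Catalan}, using \Cref{thm:overlaps} implicitly, and leaves the index shift in $c^+_{q,n}=f_{q-1,n-1}$ unjustified; your primitive-path decomposition $P=UP'D$, with its check that no valleys are created or destroyed, is the detail that justifies it.
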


\begin{proof}
The proposition follows from \Cref{lem:Dyck_height_valleys} and \Cref{prop:counting_fs_inc_Catalan}.
\end{proof}

\begin{corollary}\label{cor:overlap_rational}
    For $q\geq 2$, the generating function for the number of smooth permutations avoiding $Q_{C,q}$ is rational.
\end{corollary}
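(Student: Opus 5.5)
By \Cref{cor:GF_summary} part (2), whose hypotheses hold because $Q_{C,q}$ is inverse invariant and consists of chain indecomposable permutations, it suffices to show that $Z^+_{Q_{C,q}}(x)=\sum_{n\geq 1}c^+_{q,n}x^n$ is rational. By \Cref{prop:bounded_valleys}, $c^+_{q,n}=f_{q-1,n-1}$. So the plan is to prove that, for fixed $h\geq 1$, the generating function $F_h(x)=\sum_{n\geq 0} f_{h,n}x^n$ is rational. Here $f_{h,n}$ counts Dyck paths on $n$ whose valleys all have height in $[1,h]$. This is the condition actually needed, and the valley condition ``at least one'' is built into the count.

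The key structural observation is that every valley of such a path lies in the strip of heights $[1,h]$. Consequently, once the path rises above height $h$, it may not form a valley until it has descended back to height at most $h$. Any excursion strictly above level $h$ is therefore a single mountain $U^aD^a$, with no interior valley. I would encode the path as a walk on the finite set of heights $\{0,1,\ldots,h\}$ together with a bounded amount of state. A peak segment is a maximal run $U^aD^b$ going from a valley (or the start) at height $i$ to the next valley (or the end) at height $j$. It has peak height $i+a=j+b$, and its weight is $x^{a}$ if we count up-steps. For given valley heights $i,j\in[1,h]$, the possible peak heights are all $t\geq\max(i,j)+1$, so the weight is $\sum_{t>\max(i,j)}x^{t-i}$. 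This is a rational function, namely $x^{\max(i,j)+1-i}/(1-x)$.

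The transfer-matrix step comes next. Let $M$ be the $h\times h$ matrix whose $(i,j)$ entry is $x^{\max(i,j)+1-i}/(1-x)$. Paths that start at $0$, have $k-1$ valleys, and end at $0$ correspond to the products $u\,M^{k-2}\,v$. Here $u_j=\sum_{t\geq j+1}x^{t}=x^{j+1}/(1-x)$ for the first segment and $v_i=1$ for the final segment, since that segment is $U^{a}D^{t}$ with weight $x^{t-i}$ summed over $t>i$, which gives $x/(1-x)$ times an adjustment. The one-peak paths contribute $\sum_{t\geq1}x^t$. Summing over $k$ gives
\[F_h(x)=1+\frac{x}{1-x}+u\,(I-M)^{-1}v,\]
and this is rational because $M$ has rational entries and $I-M$ is invertible over formal power series: every entry of $M$ has positive $x$-adic valuation, since $\max(i,j)+1-i\geq1$. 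Rationality of $Z^+_{Q_{C,q}}(x)=x\,F_{q-1}(x)$, adjusted for the $n=1$ term, follows. Then \Cref{cor:GF_summary} gives rationality of $Z_{Q_{C,q}}(x)$.

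The main obstacle is getting the bookkeeping of the segments exactly right: the first and last segments, the convention that valley heights are at least one (so height $0$ is excluded from the transfer states), and the offset between $f_{q-1,n-1}$ and $c^+_{q,n}$. None of these affect rationality, however. As a check, for $q=2$ we have $h=1$, so $M$ is $1\times1$ with entry $x/(1-x)$, and the result should reproduce \Cref{prop:C_calc}'s $(x-x^2)/(1-2x)$.
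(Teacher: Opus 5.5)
Your argument is correct in substance but takes a different route from the paper.

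The paper uses the first-return decomposition $P=(UP_1'D)P_2$. For its $F_q$ (valley heights $<q$, height $0$ allowed) this gives $F_q=1/(1-xF_{q-1})$, and the paper concludes by induction on $q$.

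You instead cut the path at its valleys and run a transfer matrix on the valley heights $1,\dots,h$. Rationality then follows because $M$ has rational entries lying in $x\mathbb{Q}[[x]]$. Hence $(I-M)^{-1}=\operatorname{adj}(I-M)/\det(I-M)$ with $\det(I-M)\equiv 1 \pmod x$, and $F_h=1+\frac{x}{1-x}+u(I-M)^{-1}v$ is rational.

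Each route has its advantage:
\begin{itemize}
\item Your method is more general. It handles any finite set of allowed valley heights, where a first-return decomposition would not close up. It also gives an explicit formula in one step.
\item The paper's recursion is shorter and produces the continued fraction directly, e.g.\ $F_0=1/(1-x)$, $F_1=(1-x)/(1-2x)$, $F_2=(1-2x)/(1-3x+x^2)$.
\end{itemize}

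Two bookkeeping slips need fixing, although neither affects rationality.
\begin{enumerate}
\item The final segment from a valley at height $i$ is $U^{t-i}D^{t}$ with $t>i$. So $v_i=\sum_{t>i}x^{t-i}=x/(1-x)$, not $1$.
\item Your $f_{h,n}$ (all valleys in $[1,h]$) is not the $f$ of \Cref{prop:bounded_valleys}. That one allows valleys at height $0$; it arises after stripping the outer $U\cdots D$ of a primitive path. In your convention, $c^+_{q,n}$ is exactly your $f_{q-1,n}$ for $n\geq 1$. So $Z^+_{Q_{C,q}}(x)=F_{q-1}(x)-1$, not $xF_{q-1}(x)$.
\end{enumerate}

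With both corrections your $q=2$ check goes through: $F_1(x)-1=\frac{x}{1-x}+\frac{x^3}{(1-x)(1-2x)}=\frac{x-x^2}{1-2x}$, matching \Cref{prop:C_calc}. Keeping $v_1=1$, or the extra factor $x$, would not match.
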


\begin{proof}
The corollary follows directly from techniques developed in \cite{EZ20}, but we include a proof for completeness.  Let $f_{q,n}$ denote the number of Dyck paths on $n$ whose valley heights are less than $q$ and define
\[F_q(x)=\sum_{n\geq 0} f_{q,n}\, x^n.\]
It suffices to show that $F_q(x)$ is rational for all $q\geq 2$.  First note that when $q=2$, then $F_2(x)$ is rational by \Cref{prop:bounded_valleys} and \Cref{prop:C_calc}.  We proceed by induction on $q$.  Let $P$ denote a Dyck path whose valley heights are less than $q$.  Split $P$ into $P_1$ and $P_2$ at the first occurrence, which $P$ returns to the $x$-axis.  We can write 
\[P=P_1P_2=(UP_1'D) P_2.\]
Note that $P_1'$ is Dyck path whose valley heights are less than $q-1$, while $P_2$ is a Dyck path whose valley heights are still less than $q$.  This gives 
\[f_{q,n}=\sum_{k=1}^{n-1}f_{q-1,k}\cdot f_{q,n-k-1}\] which implies
\[F_q(x)=1+xF_{q-1}(x)\cdot F_q(x)\quad \text{and}\quad F_q(x)=\frac{1}{1-xF_{q-1}(x)}.\]
By induction, $F_q(x)$ is rational for all $q\geq 2$.
\end{proof}

\bibliographystyle{amsalpha}
\bibliography{references}

@article{EZ20,
      title={Automatic Counting of Restricted Dyck Paths via (Numeric and Symbolic) Dynamic Programming}, 
      author={Shalosh B. Ekhad and Doron Zeilberger},
      year={2020},
      journal={preprint, arXiv.2006.01961},
      eprint={2006.01961},
      archivePrefix={arXiv},
      primaryClass={math.CO},
      url={https://arxiv.org/abs/2006.01961}, 
}

@article{GAO2016,
title = {Pattern-avoiding alternating words},
journal = {Discrete Applied Mathematics},
volume = {207},
pages = {56-66},
year = {2016},
issn = {0166-218X},
doi = {https://doi.org/10.1016/j.dam.2016.03.007},
url = {https://www.sciencedirect.com/science/article/pii/S0166218X16301275},
author = {Alice L.L. Gao and Sergey Kitaev and Philip B. Zhang},
}

@article {Ba06,
    AUTHOR = {Barry, Paul},
     TITLE = {On integer-sequence-based constructions of generalized
              {P}ascal triangles},
   JOURNAL = {J. Integer Seq.},
  FJOURNAL = {Journal of Integer Sequences},
    VOLUME = {9},
      YEAR = {2006},
    NUMBER = {2},
     PAGES = {Article 06.2.4, 34},
      ISSN = {1530-7638},
   MRCLASS = {11B83 (05A19 11B37 11B65)},
  MRNUMBER = {2217230},
MRREVIEWER = {Jau-Shyong\ Shiue},
}

@article {RS18,
    AUTHOR = {Richmond, Edward and Slofstra, William},
     TITLE = {Smooth {S}chubert varieties in the affine flag variety of type
              {$\tilde A$}},
   JOURNAL = {European J. Combin.},
  FJOURNAL = {European Journal of Combinatorics},
    VOLUME = {71},
      YEAR = {2018},
     PAGES = {125--138},
      ISSN = {0195-6698,1095-9971},
   MRCLASS = {14N15 (14M15)},
  MRNUMBER = {3802239},
MRREVIEWER = {Nickolas\ Jason\ Hein},
       DOI = {10.1016/j.ejc.2018.03.003},
       URL = {https://doi.org/10.1016/j.ejc.2018.03.003},
}

@article{Az23,
    title = {Divisor labelling of staircase diagrams and fiber bundle structures on Schubert varieties},
    author = {Azam,  Faqruddin Ali},
    journal = {Oklahoma State University Dissertations [11221]},
      YEAR = {2023},
    URL = {https://shareok.org/handle/11244/339048},
}

@article{BW26,
    title = {A Bijective Proof of an Unbalanced Wilf Equivalence},
    author = {Bridges, Jensen and Waite, Michael},
    journal = {preprint, arXiv:2608.21684},
    year= {2026}
}

@book {BB05,
    AUTHOR = {Bj\"orner, Anders and Brenti, Francesco},
     TITLE = {Combinatorics of {C}oxeter groups},
    SERIES = {Graduate Texts in Mathematics},
    VOLUME = {231},
 PUBLISHER = {Springer, New York},
      YEAR = {2005},
     PAGES = {xiv+363},
      ISBN = {978-3540-442387; 3-540-44238-3},
   MRCLASS = {05-01 (05E15 20F55)},
  MRNUMBER = {2133266},
MRREVIEWER = {Jian-yi\ Shi},
}

@article{Ha92,
    title = {Enumeration of Smooth {S}chubert Varieties},
    author = {Haiman, Mark},
    journal = {preprint},
    note = {unpublished},
    year= {1992}
}

@book {Ki11,
    AUTHOR = {Kitaev, Sergey},
     TITLE = {Patterns in permutations and words},
    SERIES = {Monographs in Theoretical Computer Science. An EATCS Series},
      NOTE = {With a foreword by Jeffrey B. Remmel},
 PUBLISHER = {Springer, Heidelberg},
      YEAR = {2011},
     PAGES = {xxii+494},
      ISBN = {978-3-642-17332-5; 978-3-642-17333-2},
   MRCLASS = {05-02 (05A05 05A15 68-02 68R05 68R15 68W32)},
  MRNUMBER = {3012380},
MRREVIEWER = {Sergi\ Elizalde},
       DOI = {10.1007/978-3-642-17333-2},
       URL = {https://doi.org/10.1007/978-3-642-17333-2},
}

@article{KS03,
title = {Finite transition matrices for permutations avoiding pairs of length four patterns},
journal = {Discrete Mathematics},
volume = {268},
number = {1},
pages = {171-183},
year = {2003},
issn = {0012-365X},
doi = {https://doi.org/10.1016/S0012-365X(03)00042-6},
url = {https://www.sciencedirect.com/science/article/pii/S0012365X03000426},
author = {Darla Kremer and Wai Chee Shiu}
}

@article {LS90,
    AUTHOR = {Lakshmibai, V. and Sandhya, B.},
     TITLE = {Criterion for smoothness of {S}chubert varieties in {${\rm
              Sl}(n)/B$}},
   JOURNAL = {Proc. Indian Acad. Sci. Math. Sci.},
  FJOURNAL = {Indian Academy of Sciences. Proceedings. Mathematical
              Sciences},
    VOLUME = {100},
      YEAR = {1990},
    NUMBER = {1},
     PAGES = {45--52},
      ISSN = {0253-4142,0973-7685},
   MRCLASS = {14M15 (14L35)},
  MRNUMBER = {1051089},
MRREVIEWER = {H.\ H.\ Andersen},
       DOI = {10.1007/BF02881113},
       URL = {https://doi.org/10.1007/BF02881113},
}

@incollection {Ca94,
    AUTHOR = {Carrell, James B.},
     TITLE = {The {B}ruhat graph of a {C}oxeter group, a conjecture of
              {D}eodhar, and rational smoothness of {S}chubert varieties},
 BOOKTITLE = {Algebraic groups and their generalizations: classical methods
              ({U}niversity {P}ark, {PA}, 1991)},
    SERIES = {Proc. Sympos. Pure Math.},
    VOLUME = {56, Part 1},
     PAGES = {53--61},
 PUBLISHER = {Amer. Math. Soc., Providence, RI},
      YEAR = {1994},
      ISBN = {0-8218-1540-7},
   MRCLASS = {14M15 (14L35)},
  MRNUMBER = {1278700},
MRREVIEWER = {E.\ Aky\i ld\i z},
       DOI = {10.1090/pspum/056.1/1278700},
       URL = {https://doi.org/10.1090/pspum/056.1/1278700},
}

@article {Ry87,
    AUTHOR = {Ryan, Kevin M.},
     TITLE = {On {S}chubert varieties in the flag manifold of {${\rm
              Sl}(n,{\bf C})$}},
   JOURNAL = {Math. Ann.},
  FJOURNAL = {Mathematische Annalen},
    VOLUME = {276},
      YEAR = {1987},
    NUMBER = {2},
     PAGES = {205--224},
      ISSN = {0025-5831,1432-1807},
   MRCLASS = {14M15 (14M17)},
  MRNUMBER = {870962},
MRREVIEWER = {Konrad\ Drechsler},
       DOI = {10.1007/BF01450738},
       URL = {https://doi.org/10.1007/BF01450738},
}

@article {BMB07,
    AUTHOR = {Bousquet-M\'elou, Mireille and Butler, Steve},
     TITLE = {Forest-like permutations},
   JOURNAL = {Ann. Comb.},
  FJOURNAL = {Annals of Combinatorics},
    VOLUME = {11},
      YEAR = {2007},
    NUMBER = {3-4},
     PAGES = {335--354},
      ISSN = {0218-0006,0219-3094},
   MRCLASS = {05A05 (05A15)},
  MRNUMBER = {2376109},
MRREVIEWER = {Daniel\ E.\ Warren},
       DOI = {10.1007/s00026-007-0322-1},
       URL = {https://doi.org/10.1007/s00026-007-0322-1},
}

@article {RS17,
    AUTHOR = {Richmond, Edward and Slofstra, William},
     TITLE = {Staircase diagrams and enumeration of smooth {S}chubert
              varieties},
   JOURNAL = {J. Combin. Theory Ser. A},
  FJOURNAL = {Journal of Combinatorial Theory. Series A},
    VOLUME = {150},
      YEAR = {2017},
     PAGES = {328--376},
      ISSN = {0097-3165,1096-0899},
   MRCLASS = {14M15 (05E18)},
  MRNUMBER = {3645580},
MRREVIEWER = {Peter\ Crooks},
       DOI = {10.1016/j.jcta.2017.03.009},
       URL = {https://doi.org/10.1016/j.jcta.2017.03.009},
}

@article {GG20,
    AUTHOR = {Gaetz, Christian and Gao, Yibo},
     TITLE = {Self-dual intervals in the {B}ruhat order},
   JOURNAL = {Selecta Math. (N.S.)},
  FJOURNAL = {Selecta Mathematica. New Series},
    VOLUME = {26},
      YEAR = {2020},
    NUMBER = {5},
     PAGES = {Paper No. 77, 23},
      ISSN = {1022-1824,1420-9020},
   MRCLASS = {05E99 (14M15)},
  MRNUMBER = {4177574},
MRREVIEWER = {Marko\ Radovanovi\'c},
       DOI = {10.1007/s00029-020-00608-z},
       URL = {https://doi.org/10.1007/s00029-020-00608-z},
}
\end{document}